\begin{document}
	%\begin{center}
	\title{\textbf{\large Local spectral density and Slepian concentration for spherical Fourier-Bessel truncation spaces}}
	
	%	\author[1]{C. Gerhards\thanks{christian.gerhards@geophysik.tu-freiberg.de}}
	\author[1]{X. Huang\thanks{xinpeng.huang@csu.edu.cn}}
	%	\author[1]{A. Kegeles\thanks{alexander.kegeles@geophysik.tu-freiberg.de}}
	
	\affil[1]{
		Central South University, Lushannan Road 932, Changsha, China
	}
	\date{}
	\maketitle
	
	\begin{abst}
		We study diagonal kernel asymptotics and concentration spectra for a family of non-translation-invariant spectral projections in $\R^d$, $d\geq 2$. The projections are obtained from the classical Paley--Wiener projection by imposing, in spherical coordinates, an additional cutoff in the spherical-harmonic degree. Equivalently, they are the spherical Fourier--Bessel (SFB) truncation spaces in which, in addition to the radial Hankel/Bessel bandwidth $K$, only spherical harmonic degrees $\degaInCFive\leq\degAInCFive$ are retained. This angular cutoff preserves rotation invariance but breaks translation invariance, so the diagonal reproducing kernel has a spatially varying radial profile. In the coupled asymptotic regime $\degAInCFive/K\to\kappa$, we identify the limiting profile of the normalized diagonal reproducing kernel $K^{-d}\KLK(x,x)$, interpreted as the local density encoded by the SFB projection. The profile is the rescaled radial transition function ${\functionInCFiveFB}^{\langle d\rangle}_{\langle\kappa\rangle}(\|x\|)={\functionInCFiveFB}^{\langle d\rangle}(\|x\|/\kappa)$. Its constant plateau recovers the constant density of the classical Paley--Wiener projection for $\|x\|\lesssim\kappa$, while its far-field tail, when weighted by the spherical volume element, yields a Hankel-type radial density law with angular-bandwidth factor $\kappa^{d-1}$. Thus the classical Paley--Wiener concentration problem is recovered at the endpoint $\kappa=\infty$, whereas finite $\kappa$ exhibits a transition from a Fourier-like local-density region to a Hankel-type radial-density regime, with $\kappa$ setting the radial scale of this transition. Using this local-density asymptotic, we prove an asymptotically bimodal eigenvalue distribution and a Shannon-number formula whose leading coefficient is the integral of this $\kappa$-dependent density over the localization domain.
	\end{abst}
	
	\begin{key}
			Slepian concentration problem, spherical Fourier-Bessel functions, Paley-Wiener spaces, diagonal reproducing kernels, Shannon number, Hankel transform
	\end{key}
	\begin{amsclass} 
		 42C10, 33C10, 47B06, 46E22
	\end{amsclass} 
	\section{Introduction}
	The spherical Fourier--Bessel system (SFB, more precisely the spherical harmonic--Bessel system) provides a generalized orthogonal decomposition on $\R^d$, viewed in spherical coordinates as $\R_{+}\times\Sphered$. Its modes are products of Bessel functions in the radial variable and spherical harmonics in the angular variable. Since these functions arise from separation of variables for the Helmholtz equation and decouple radial and angular behavior, they are natural tools for signal analysis in ball-like geometries. Applications of 3-D SFB functions in astrophysics and cosmology are discussed, for example, in \cite{Heavens1995,SuperFaB,Pratten2013,PhysRevD.90.063515,Fisher_1995-te,PhysRevD.104.103523,Lanusse2015,Rassat2012,6280687,10.1093/mnras/stv2724,Leistedt2012}. Motivated by incomplete observations and spatial feature detection, localized spatial-spectral constructions based on SFB decompositions have also been studied; see, for example, the 3-D SFB wavelet systems in \cite{Lanusse2012,PhysRevD.90.103532}. Most closely related to the present paper, Slepian functions for SFB band-limited spaces in $\R^3$ were introduced in \cite{Khalid2016a}.

	Beyond this specific representation-theoretic origin, the resulting spaces provide a model problem in spectral analysis: they arise from the Paley--Wiener projection by imposing a symmetry-preserving but non-translation-invariant cutoff, and therefore allow one to quantify how local density laws and concentration spectra change when translation invariance is lost.

	The Slepian concentration problem, initiated in the classical works of Slepian, Landau, and Pollak (e.g., \cite{Slepian1961,Slepian1978,Landau1961,Landau1962}), asks for band-limited functions with optimal concentration on a prescribed spatial domain. Equivalently, one studies the eigenvalue decomposition of a spatiospectral concentration operator built from spatial and spectral projections. In the classical Paley-Wiener setting, the resulting Shannon number has the familiar space-bandwidth form. Related constructions for spherical harmonics and Paley-Wiener spaces have become standard tools in communication theory, signal processing, geophysics, and biophysics; see, e.g., \cite{sneeuw99,Thomson82,Simons2006,Simons2006b}. Many variants of the Slepian framework have also been developed, including versions on different geometries \cite{Khalid2016a,leweke18,roddy2023slepian,GraphSlepian17,Miranian_2004,Erb2019ShapesOU}, polynomial analogues \cite{Perlstadt1986,CASTRO2015328,CASTRO2024101600,grunbaum2015time,Grünbaum_2017,GRUNBAUM1983491}, vector and tensor variants \cite{Plattner2014,Michel2022}, and formulations involving multipliers or forward operators instead of sharp spatial/spectral projections \cite{michelsimons18,Erb2013,Erb2015,Plattner2017}.

	\paragraph{Relation to GPSFs and Fourier concentration operators.} The present problem is closely connected with the classical generalized prolate spheroidal function (GPSF) problem. In the GPSF setting (see, e.g., \cite{Slepian1964GPSF,Lederman2017GPSF,Greengard2024GPSF}), the spectral projection is the Euclidean Fourier projection onto a ball in frequency space, and the corresponding Paley--Wiener space is translation-invariant. Recent work has also studied eigenvalue distributions for higher-dimensional Fourier spatio-spectral limiting operators with general spatial and frequency domains; see, for example, \cite{IsraelMayeli2024,HughesIsraelMayeli2025}, as well as related non-asymptotic eigenvalue estimates in \cite{MarcecaRomeroSpeckbacher2024}. Here we consider the proper subspace $\spaceInCFive_{\degAInCFive,K}\subset \mathrm{PW}_{K}$ obtained by retaining, besides the radial Fourier/Bessel bandwidth $K$, only spherical harmonic degrees $n\leq \degAInCFive$. Thus the SFB truncation is not merely a spherical-coordinate representation of the classical Paley--Wiener/GPSF projection; it is a different, non-translation-invariant spectral projection obtained by imposing an additional angular cutoff. In particular, this radial-angular truncation in the spectral domain preserves rotations but loses translations, and consequently the diagonal reproducing kernel $\KLK(x,x)$ becomes a non-constant radial density.

	Thus SFB truncation spaces form a two-parameter refinement of the classical Paley--Wiener space. As $\degAInCFive\to\infty$, they recover $\mathrm{PW}_K$; for finite $\degAInCFive$, the angular cutoff replaces the constant diagonal density of the Paley--Wiener projection by a radial profile.
	
	To identify this profile, we organize the large-bandwidth limit by the relative growth of the two cutoffs. We study the coupled regime
	\[
		\frac{\degAInCFive}{K}\to \kappa\in(0,\infty),
	\]
	where $\kappa$ measures the angular resolution retained per unit radial bandwidth. Equivalently, $\kappa$ sets the radial scale on which the SFB projection behaves like the classical Paley--Wiener projection before entering the Hankel-type far-field regime. The limiting density has the form (see Definition~\ref{def:UandW} for the explicit expression)
	\[
		{\functionInCFiveFB}^{\langle d\rangle}_{\langle\kappa\rangle}(\|x\|)={\functionInCFiveFB}^{\langle d\rangle}(\|x\|/\kappa).
	\]
	This dilation determines which part of the profile is sampled by a fixed spatial domain. Since ${\functionInCFiveFB}^{\langle d\rangle}$ is constant on $[0,1]$, the region $\|x\|\leq\kappa$ has the same constant local density as in the classical Paley--Wiener setting. Corollary~\ref{cor:tailWHankel} shows that, for $\|x\|\gg\kappa$, the density has a Hankel-type radial tail. After multiplication by the spherical volume element, this tail gives a leading radial density of order $\kappa^{d-1}\diffsymbol\|x\|$, matching the radial-length scaling that appears in Hankel concentration at the level of the density law \cite{Abreu2012}. Thus the profile ${\functionInCFiveFB}^{\langle d\rangle}$ quantifies the transition from a $d$-dimensional Paley--Wiener local-density regime near $\|x\|\leq \kappa$ to a Hankel-type radial-density regime for $\|x\|\gg\kappa$. At the endpoint $\kappa\to\infty$, the constant part of the profile expands to all bounded spatial regions, and the constant density of the classical Paley--Wiener projection is recovered.

	The main analytical contributions are as follows.
	\begin{itemize}[$\circ$]
			\item We derive the limiting profile of the normalized diagonal reproducing kernel $K^{-d}\KLK(x,x)$ in the coupled regime $\degAInCFive/K\to\kappa$. This identifies the local spectral density associated with a non-translation-invariant angular cutoff.
			\item We show that this profile interpolates between two density laws: a constant Paley--Wiener plateau and a Hankel-type far-field radial-density law with angular-bandwidth factor $\kappa^{d-1}$.
			\item We prove an asymptotically bimodal eigenvalue distribution and a Shannon-number law for the associated concentration operators, with leading coefficient given by the integral of the limiting local density over the localization domain.
	\end{itemize}

	Thus $K^d$ provides the usual bandwidth scaling, while the angular cutoff replaces the translation-invariant phase-space coefficient by a radial, domain-dependent density. In particular, domains of equal volume may have different leading Shannon numbers when they occupy different radial regions.

	More precisely, for a bounded Lipschitz domain $D$, we study the eigenvalues of the SFB-based spatiospectral concentration operator. These eigenvalues lie in $[0,1]$ and measure the spatial concentration of SFB band-limited functions in $D$. We show that the number of eigenvalues near one, normalized by $K^d$, converges to the integral of the limiting density over $D$, whereas the number of eigenvalues in any fixed intermediate interval is $\smallo(K^d)$. This gives the Shannon-number asymptotic its interpretation as the leading effective dimension of the concentration problem.

	We also demonstrate the eigenvalue-distribution theorem numerically for radial localization domains of disk and annular type. These experiments are intended only to illustrate the asymptotic eigenvalue distribution and the radial dependence of the Shannon coefficient; they are not used in the proofs of the main results. The experiments compare the observed eigenvalue plunge with the theoretical leading count and confirm that equal-volume domains can have different effective dimensions depending on their radial placement.

	The paper is organized as follows. Section \ref{sec:preliminary} recalls the SFB band-limited spaces, their reproducing kernels, and the Bessel-function estimates used later. Section \ref{secC5:sec2} proves the asymptotic formulas for the diagonal reproducing kernel. Section \ref{secC5:SlepianProblem} applies these formulas to the eigenvalue distribution of the Slepian concentration operator. Section~\ref{secC5:radial-nystrom} develops the radial block discretization and numerically illustrates the dependence of the eigenvalue plunge on radial placement. Appendix~\ref{secC5:numericalremarks} records additional bounded-ball analogues and supplementary kernel observations that are not used in the proofs of the main results.

	\paragraph{Notation} We use $\mathbb{R}$, $\mathbb{R}_{+}$, $\mathbb{R}^d$, $\mathbb{N}$, and $\mathbb{N}_{0}$ for the real numbers, positive real numbers, $d$-dimensional real vectors, positive integers, and non-negative integers, respectively. The closed $d$-dimensional ball of radius $r>0$ is denoted by $\BBCFour{r}=\{x\in\R^d:\|x\|\leq r\}$, and $\BB^d=\BBCFour{1}$ denotes the closed unit ball. The unit sphere is $\Sphered=\partial \B^d=\{x\in\R^d:\|x\|=1\}$. For a linear vector space $X$, $\mathrm{dim}(X)$ denotes its dimension and $\mathrm{proj}_{X}$ denotes the orthogonal projection onto $X$ from an ambient Hilbert space. For a set $A$, $\sharp A$ denotes its cardinality and $\chi_A$ its characteristic function.

\section{Preliminaries}\label{sec:preliminary}
\subsection{Helmholtz equation in spherical coordinates}
The spherical Fourier--Bessel basis arises from solving the free-space Helmholtz equation (i.e., finding eigenfunctions of the Laplacian)
\begin{align}\label{eqnC5:Helmholtz equation}
	(	\laplacian +k^2)f=0,
\end{align}
in $\R^d$, with $d\geq 2$. In spherical coordinates,  %https://en.wikipedia.org/wiki/Laplace_operator#Definition
\begin{align}
	\laplacian f&= \frac{1}{r^{d-1}}\frac{\partial}{\partial r}\bigg( r^{d-1}\frac{\partial}{\partial r}\bigg)f+\frac{1}{r^2}\laplacian_{\Sphere} f\\ \nonumber
	&=\frac{\partial^2}{\partial r^2}f+\frac{d-1}{r}\frac{\partial}{\partial r}f+\frac{1}{r^2}\laplacian_{\Sphere} f.
\end{align}
Here $\laplacian_{\Sphere}$ denotes the Laplace--Beltrami operator on $\Sphered$. Separating variables by writing $f(x)=R(r)\Xi(\xi)$, with $r=\|x\|$ and $\xi=x/\|x\|\in\Sphered$, yields the following equations for $R(r)$ and $\Xi(\xi)$:
\begin{subnumcases}{}
	(\laplacian_{\Sphere} +{\tilde{\degaInCFive}}^2)\Xi(\xi)=0, \label{eqnC5:sp-helmholtz}\\
	\left[\frac{\diffsymbol^2}{\diffsymbol r^2}+\frac{d-1}{r}\frac{\diffsymbol}{\diffsymbol r}+(k^2-\frac{{\tilde{\degaInCFive}}^2}{r^2})\right]R(r)=0. \label{eqnC5:r-helmholtz}
\end{subnumcases}

The spherical equation~\eqref{eqnC5:sp-helmholtz} has solutions only for the discrete eigenvalues of $-\laplacian_{\Sphere}$, namely ${\tilde{\degaInCFive}}^2=\degaInCFive(\degaInCFive+d-2)$ with $\degaInCFive\in\mathbb{N}_0$. The corresponding solutions form the $\left[\binom{\degaInCFive+d-1}{\degaInCFive}-\binom{\degaInCFive+d-3}{\degaInCFive-2}\right]$-dimensional space of spherical harmonics, denoted by $H_{\degaInCFive}^{d}$. We choose an arbitrary orthonormal basis $\{Y_{\degaInCFive,\degb}:\degb=1,\ldots,\mathrm{dim}(H_{\degaInCFive}^d)\}$ of this space. We also write $\mathrm{Harm}_{\degAInCFive}(\Sphered)$ for the orthogonal sum of $H_{\degaInCFive}^d$ over all $\degaInCFive\leq\degAInCFive$, that is, $\mathrm{Harm}_{\degAInCFive}(\Sphered):=\oplus_{0\leq\degaInCFive\leq\degAInCFive}H_{\degaInCFive}^{d}$.

For the radial equation in~\eqref{eqnC5:r-helmholtz}, Lommel's transformation of Bessel's equation \cite[Chap. 4.31]{Watson}, with ${\tilde{\degaInCFive}}^2=\degaInCFive(\degaInCFive+d-2)$, gives solutions of the form
\begin{align*}
	R(r)={C} r^{\frac{2-d}{2}}\mathcal{E}_{\degaInCFive+\frac{d-2}{2}}(kr),\quad {C}\in \R,% \degaInCFive\in\N_{0}, k\in \R_{+}
\end{align*}
where $\mathcal{E}_{v}$ denotes the Bessel function of order $v$. Imposing well-definedness of $f$ at the origin requires $\mathcal{E}_v$ to be the Bessel function \emph{of the first kind}, denoted by $J_v$ in the standard notation.

\subsection{Spherical Fourier-Bessel band-limited function space}

We define the SFB truncation space through its angular decomposition and its Hankel spectral support. This avoids treating fixed-frequency Bessel waves, which are generalized eigenfunctions rather than elements of $L^2(\R^d)$, as an ordinary Hilbert-space basis.

Let $f\in L^2(\R^d)$. In spherical coordinates it has the orthogonal expansion
\begin{align}\label{eqnC5:angular-decomposition-definition}
f(r\xi)
&=\sum_{\degaInCFive=0}^{\infty}
\sum_{\degb=1}^{\dim(H_{\degaInCFive}^{d})}
f_{\degaInCFive,\degb}(r)Y_{\degaInCFive,\degb}(\xi),\\
f_{\degaInCFive,\degb}(r)
&=\int_{\Sphered}f(r\xi)
\overline{Y_{\degaInCFive,\degb}(\xi)}
\diffsymbol\omega(\xi),
\end{align}
where the expansion is understood in $L^2(\R^d)$. Here and below, for each $\degaInCFive$, the index $\degb$ ranges from $1$ to $\dim(H_{\degaInCFive}^{d})$. Put
\[
\nu_{\degaInCFive}:=\degaInCFive+\frac{d-2}{2},
\qquad
g_{\degaInCFive,\degb}(r):=
r^{\frac{d-2}{2}}f_{\degaInCFive,\degb}(r).
\]
Then $g_{\degaInCFive,\degb}\in L^2(\R_+,r\diffsymbol r)$. For $\alpha\geq-\nicefrac{1}{2}$, let $\mathcal H_\alpha$ denote the unitary Hankel transform on $L^2(\R_+,r\diffsymbol r)$,
\[
(\mathcal H_\alpha g)(k)
:=\int_0^\infty g(r)J_\alpha(kr)r\diffsymbol r,
\]
where the integral is interpreted through the standard $L^2$ extension. For $\degAInCFive\in\N_0$ and $K>0$, we define
\begin{equation}\label{eqnC5:strict-definition-space}
\begin{aligned}
\spaceInCFive_{\degAInCFive,K}
:=\big\{f\in L^2(\R^d):\;&
f_{\degaInCFive,\degb}=0
\text{ for }\degaInCFive>\degAInCFive,\ \text{and}\\[-0.2em]
&
\operatorname{ess\,supp}
\mathcal H_{\nu_{\degaInCFive}}
g_{\degaInCFive,\degb}
\subset[0,K]
\text{ for }0\leq\degaInCFive\leq\degAInCFive
\big\}.
\end{aligned}
\end{equation}
We call $\degAInCFive$ the spherical harmonic bandwidth and $K$ the radial Hankel, or Bessel, bandwidth.

In the same generalized spectral sense in which a Paley--Wiener space is associated with plane waves of frequencies $\|\omega\|\leq K$, the space $\spaceInCFive_{\degAInCFive,K}$ may be viewed as retaining the SFB modes
\[
r^{\frac{2-d}{2}}
J_{\degaInCFive+\frac{d-2}{2}}(kr)
Y_{\degaInCFive,\degb}(\xi),
\qquad
0\leq\degaInCFive\leq\degAInCFive,\quad 0\leq k\leq K.
\]

Equivalently, this means imposing the angular cutoff $\degaInCFive\leq\degAInCFive$ together with the Hankel spectral support condition $k\in[0,K]$ in each angular channel, as in~\eqref{eqnC5:strict-definition-space}.

For comparison, with the Fourier transform, the classical Paley--Wiener space is
\begin{align}\label{eqnC5:PW-strict-definition}
\mathrm{PW}_{K}
:=\{f\in L^2(\R^d):
\operatorname{ess\,supp}\mathcal Ff
\subset\overline{\BBCFour{K}}\}.
\end{align}
The Fourier transform in spherical coordinates decomposes into the same angular channels and the Hankel transforms of orders $\nu_{\degaInCFive}$. Consequently,
\begin{align}\label{eqnC5:SFB-as-angular-truncation}
\spaceInCFive_{\degAInCFive,K}
=\big\{f\in\mathrm{PW}_K:
f_{\degaInCFive,\degb}=0,\quad
\forall\degaInCFive>\degAInCFive\big\}.
\end{align}
Thus $\spaceInCFive_{\degAInCFive,K}$ is precisely the Paley--Wiener space with an additional angular cutoff. As $\degAInCFive\to\infty$, the corresponding orthogonal projections converge strongly to the projection onto $\mathrm{PW}_K$.

%in the sense
%\begin{align*}
%\mathrm{PW}_{K}=\cup_{\degAInCFive\in\mathbb{N}_{+}}\spaceInCFive_{\degAInCFive, K}
%\end{align*}

This truncation of the spherical harmonic coefficients is natural in several applications. For example, in planetary-scale geophysical inverse problems, observational data are often collected on
%surface of earth and usually in the form of finite degree of spherical harmonic coefficients.
a spherical orbit above the planet. The resulting gravity or magnetic-field models are typically represented by finitely many spherical harmonics. Consequently, any term in the SFB expansion of an interior source whose angular degree exceeds the maximal spherical harmonic degree of the data is invisible to the corresponding model. From a mathematical perspective, the space is also relevant to physical systems in which $k$ (radial wavenumber) and $\degaInCFive$ (angular momentum) are coupled.

The analysis in this paper is based on the fact that $\spaceInCFive_{\degAInCFive, K}$ is a reproducing kernel Hilbert space. We therefore study the reproducing kernel $\KLK(x,y)$ of $\spaceInCFive_{\degAInCFive, K}$, which is characterized by
\begin{align}\label{eqnC5:defOfRPK}
	\mathrm{proj}_{\spaceInCFive_{\degAInCFive, K}}f(x)=\int_{\R^d}f(y)\overline{\KLK(x,y)}\diffsymbol y, \quad \forall f\in L^2(\R^d).
\end{align}
Our main objective is to describe the diagonal $\KLK(x,x)$ for large $\degAInCFive$ and $K$. In~\eqref{eqnC5:defOfRPK}, $\KLK(x,\cdot)$ and $\KLK(\cdot,y)$ are interpreted as $L^2$ functions. When writing $\KLK(x,x)$, however, we refer to the unique continuous representative $\KLK\in C(\R^d\times\R^d)$ satisfying~\eqref{eqnC5:defOfRPK}, so the diagonal value is unambiguous. By the reproducing property, it also satisfies
\begin{align}
	\KLK(x,x)=\int_{\R^d} \KLK(x,y) \overline{\KLK(x,y)} \diffsymbol y =\|\KLK(x,\cdot)\|^2_{L^2(\R^d)}.
\end{align} 
This function is crucial in the Slepian concentration problem for SFB systems, especially in the characterization of the Shannon number discussed in Section \ref{secC5:SlepianProblem}.

%A significant difference between $\spaceInCFive_{\degAInCFive, K}$ and $\mathrm{PW}_{K}$ is that the translation-invariant holds for $\mathrm{PW}_{K}$ but not for $\spaceInCFive_{\degAInCFive, K}$, although the rotation-invariant is preserved in both cases. 
The spaces $\spaceInCFive_{\degAInCFive, K}$ and $\mathrm{PW}_{K}$ have different invariance properties. The Paley--Wiener space $\mathrm{PW}_{K}$ is invariant under translations and rotations, whereas $\spaceInCFive_{\degAInCFive, K}$ retains only rotation invariance. This distinction is reflected in their reproducing kernels: for any $x,y\in\R^d$ and any rotation matrix $\Ro$,
\begin{align*}
	\mathcal{K}_{\mathrm{PW}_{K}}(x,y)=\mathcal{K}_{\mathrm{PW}_{K}}(0,\Ro(x-y)) && \KLK(x,y)=\KLK(\Ro x,\Ro y),
\end{align*}
where $\mathcal{K}_{\mathrm{PW}_{K}}$ denotes the reproducing kernel of $\mathrm{PW}_{K}$. In particular, for the space $\mathrm{PW}_{K}$, the diagonal reproducing kernel is a constant function, since $\mathcal{K}_{\mathrm{PW}_{K}}(x,x)=\mathcal{K}_{\mathrm{PW}_{K}}(0,0)$ for any $x\in\R^d$. For $\spaceInCFive_{\degAInCFive, K}$, the diagonal kernel $\KLK(x,x)$ is radial, i.e., it depends on $\|x\|$. As shown in Section \ref{secC5:sec2}, it exhibits a nontrivial spatial pattern.

\paragraph{Formulation of the reproducing kernel:}

With the angular coefficients from~\eqref{eqnC5:angular-decomposition-definition}, the orthogonal projection onto $\spaceInCFive_{\degAInCFive,K}$ acts independently in each retained angular channel:
\begin{align}
	\mathrm{proj}_{\spaceInCFive_{\degAInCFive, K}}f(r\xi)=\sum_{\degaInCFive=0}^{\degAInCFive}\sum_{\degb=1}^{ \mathrm{dim}(H_{\degaInCFive}^{d})} \mathrm{proj}_{d,\degaInCFive,K} f_{\degaInCFive,\degb} (r) Y_{\degaInCFive,\degb}(\xi),
\end{align}
where $\mathrm{proj}_{d,\degaInCFive,K}$ is the weighted Hankel band-limiting projection in $L^2(\R_+,r^{d-1}\diffsymbol r)$. Explicitly, let
\begin{align} \label{eqnC5:WeightedHankelProjection}
	\tilde{\opFont{P}} := & \mathcal{M}_{r^{\frac{2-d}{2}}}\circ \mathrm{proj}_{\mathcal{H}_{\degaInCFive+\frac{d-2}{2}},K}\circ \mathcal{M}_{r^{\frac{d-2}{2}}}: \\ \nonumber
	& L^2(\R_{+}, r^{d-1}\diffsymbol r)\to L^2(\R_{+}, r\diffsymbol r)\to L^2(\R_{+}, r\diffsymbol r)\to L^2(\R_{+}, r^{d-1}\diffsymbol r),
\end{align}
where $\mathcal{M}_g$ denotes the multiplication operator $f\to fg$ and $\mathrm{proj}_{\mathcal{H}_{\degaInCFive+\frac{d-2}{2}},K}$ is the standard Hankel band-limited projection with order $\degaInCFive+\frac{d-2}{2}$ and bandlimit $K$, i.e.,
\begin{align}
	\mathrm{proj}_{\mathcal{H}_{\degaInCFive+\frac{d-2}{2}},K} f= & \mathcal{H}_{\degaInCFive+\frac{d-2}{2}}^{-1} \left[\chi_{[0,K]}\cdot \mathcal{H}_{\degaInCFive+\frac{d-2}{2}} f\right] : && L^2(\R_{+}, r\diffsymbol r)\to L^2(\R_{+}, r\diffsymbol r).
\end{align} 

Here $\mathcal H_\alpha$ is the Hankel transform introduced above (see also \cite[Chap. 8]{Titchmarsh:1986:ITF}). Its unitarity shows directly that $\tilde{\opFont P}$ is the orthogonal projection determined by the support condition in~\eqref{eqnC5:strict-definition-space}; hence $\tilde{\opFont P}=\mathrm{proj}_{d,\degaInCFive,K}$.

The integral representation of the Hankel band-limiting projection is %$\mathrm{proj}_{\mathcal{H}_{\degb},K}$
%\begin{align}\label{eqnC5:IntegralHankelProjection}
%	\mathrm{proj}_{\mathcal{H}_{\degb},K} f(r)&=\int_{0}^{K}   \left[\int_{0}^{\infty} f(r')J_{\degb}(kr') r'\diffsymbol r' \right]J_{\degb}(kr) kdk\\ \nonumber
%	&=\int_{0}^{\infty} f(r') \left[\int_{0}^{K} J_{\degb}(kr)J_{\degb}(kr') k \diffsymbol k\right] r'\diffsymbol r'
%\end{align}
\begin{align}\label{eqnC5:IntegralHankelProjection}
	\mathrm{proj}_{\mathcal{H}_{\alpha},K} f(r)=\int_{0}^{\infty} f(r') \left[\int_{0}^{K} J_{\alpha}(kr)J_{\alpha}(kr') k \diffsymbol k\right] r'\diffsymbol r',
\end{align}
inserting~\eqref{eqnC5:IntegralHankelProjection} into~\eqref{eqnC5:WeightedHankelProjection} gives
% \begin{align}\label{eqnC5:IntegralWeightedHankelProjection}
	% 	\mathrm{proj}_{d,\degaInCFive,K} f(r)&= r^{\frac{2-d}{2}}\int_{0}^{\infty} {r'}^{\frac{d-2}{2}}f(r') \left[\int_{0}^{K} J_{\degaInCFive+\frac{d-2}{2}}(kr)J_{\degaInCFive+\frac{d-2}{2}}(kr') k \diffsymbol k\right] r'\diffsymbol r'\\ \nonumber
	% 	&=\int_{0}^{\infty} f(r') \left[(rr')^{\frac{2-d}{2}}\int_{0}^{K} J_{\degaInCFive+\frac{d-2}{2}}(kr)J_{\degaInCFive+\frac{d-2}{2}}(kr') k \diffsymbol k\right] (r')^{d-1}\diffsymbol r'
	% \end{align}
\begin{align}\label{eqnC5:IntegralWeightedHankelProjection}
	\mathrm{proj}_{d,\degaInCFive,K} f(r)= r^{\frac{2-d}{2}}\int_{0}^{\infty} {r'}^{\frac{d-2}{2}}f(r') \left[\int_{0}^{K} J_{\degaInCFive+\frac{d-2}{2}}(kr)J_{\degaInCFive+\frac{d-2}{2}}(kr') k \diffsymbol k\right] r'\diffsymbol r'.
\end{align}
Therefore,
\begin{align}\label{eqnC5:IntegralProjectionPi}
	\mathrm{proj}_{\spaceInCFive_{\degAInCFive, K}}f(r\xi)
	&=\sum_{\degaInCFive=0}^{\degAInCFive}\sum_{\degb=1}^{ \mathrm{dim}(H_{\degaInCFive}^{d})}\int_{\Sphered}\int_{0}^{\infty} f(r'\xi') \left[(rr')^{\frac{2-d}{2}}\int_{0}^{K} J_{\degaInCFive+\frac{d-2}{2}}(kr)J_{\degaInCFive+\frac{d-2}{2}}(kr') k \diffsymbol k\right] \\ \nonumber
	&\quad \quad \quad \quad \times Y_{\degaInCFive,\degb}(\xi) \overline{Y_{\degaInCFive,\degb}(\xi')}\underbrace{[(r')^{d-2}r']\diffsymbol r'\diffsymbol \omega(\xi')}_{=\diffsymbol x'}\\ \nonumber
	&= \int_{\R^d}f(x')	\KLK(x,x') \diffsymbol x',
\end{align}
which yields
\begin{align}\label{eqnC5:K}
	\nonumber \KLK(x,y)&=  \sum_{\degaInCFive=0}^{\degAInCFive}\left\{\left[(r_xr_y)^{\frac{2-d}{2}}\int_{0}^{K} J_{\degaInCFive+\frac{d-2}{2}}(kr_x)J_{\degaInCFive+\frac{d-2}{2}}(kr_y) k \diffsymbol k\right]\sum_{\degb=1}^{ \mathrm{dim}(H_{\degaInCFive}^{d})} Y_{\degaInCFive,\degb}(\xi_x) \overline{Y_{\degaInCFive,\degb}(\xi_y)}\right\}  \\ 
	&=	 (r_xr_y)^{\frac{2-d}{2}}\sum_{\degaInCFive=0}^{\degAInCFive}\left\{\left[\int_{0}^{K} J_{\degaInCFive+\frac{d-2}{2}}(kr_x)J_{\degaInCFive+\frac{d-2}{2}}(kr_y) k \diffsymbol k\right]\frac{\mathrm{dim}(H_{\degaInCFive}^{d})}{\mathrm{vol}(\Sphered)} P_{\degaInCFive}^{(d)}(\langle\xi_x,\xi_y\rangle)\right\} .
\end{align}
The last equality follows from the addition theorem for spherical harmonics (see, e.g., \cite{Mller1997AnalysisOS}). Here $P_{\degaInCFive}^{(d)}$ denotes the degree-$\degaInCFive$ Legendre polynomial in dimension $d$, normalized by $P_{\degaInCFive}^{(d)}(1)=1$. In particular, $P_{\degaInCFive}^{(2)}$ is the Chebyshev polynomial of the first kind, whereas for $d\geq3$ it can be represented as follows (see, e.g., \cite[Eq. (6.4.41)]{FreGut13}):%\cite[Theorem 1.2.6]{DaiXu})
\begin{align*}
	P_{\degaInCFive}^{(d)}(t)=\frac{C_{\degaInCFive}^{\frac{d-2}{2}}(t)}{C_{\degaInCFive}^{\frac{d-2}{2}}(1)}= \frac{\substigamma(d-2)\substigamma(n+1)}{\substigamma(d-2+n)}C_{\degaInCFive}^{\frac{d-2}{2}}(t),
\end{align*}
where $C_{\degaInCFive}^{\alpha}$ denotes the Gegenbauer
polynomial. Moreover, $\mathrm{vol}(\Sphered)$ is the volume of the $(d-1)$-dimensional sphere, whose explicit value is $\frac{2\pi^{d/2}}{\substigamma(\frac{d}{2})}$.
%\begin{align*}\
%	\mathrm{vol}(\Sphered)=\frac{2\pi^{d/2}}{\substigamma(\frac{d}{2})}.
%\end{align*}

When we take $x=y$,~\eqref{eqnC5:K} simplifies to
\begin{align}\label{eqnC5:diagonalK}
	\KLK(x,x)&=\frac{(r_x)^{2-d}}{\mathrm{vol}(\Sphered)}\int_{0}^{K}  \sum_{\degaInCFive=0}^{\degAInCFive} \mathrm{dim}(H_{\degaInCFive}^{d}) J^2_{\degaInCFive+\frac{d-2}{2}}(kr_x) k \diffsymbol k.
\end{align}

\subsection{Preliminaries on the Bessel functions }%of the first kind}
Our analysis relies on several properties of Bessel functions, which we collect here for later use. Throughout the paper, we only consider Bessel functions of the first kind, $J_v(t)$, with real non-negative order $v$ and argument $t$.

For large arguments, $J_v$ behaves like a damped cosine function. In particular, we have the well-known asymptotic estimate (cf. \cite[Prop. 5.6]{wendland05})
\begin{align}\label{eqnC5:approxJv1}
J_v(t)= \sqrt{\frac{2}{\pi t}}\{\cos(t-\frac{v\pi}{2}-\frac{\pi}{4})\} +\bigo(t^{-3/2}) \quad (t \to \infty).
\end{align}
The term $-\frac{v\pi}{2}$ in the argument of the cosine in~\eqref{eqnC5:approxJv1} is the phase shift of the Bessel function; it represents a delay in the onset of oscillatory behavior (cf. \cite[p.~7]{WRB08}). Indeed, for large $v$, $J_v$ is small on the interval $[0,v]$ in the following sense:
for $v>0$ and $0\leq t\leq v$, both $J_v(t)$ and $J_v'(t)$ (where $J_v'$ denotes the derivative of $J_v$) are increasing functions of $t$, with upper bounds given by
\begin{enumerate}
\item \cite[Chap. 8.54]{Watson} $v^{-1/3}J_v(v)$, as a function of real $v$, is increasing and converges to $\frac{\substigamma(1/3)}{2^{2/3}3^{1/3}\pi}\approx 0.44731$. As a result,
\begin{align}\label{bound:inpBesselj}	
	\max_{0\leq t\leq v}|J_v(t)| = J_v(v)\leq \frac{\substigamma(1/3)}{2^{2/3}3^{1/3}\pi} v^{-1/3}.
\end{align}
\item By \cite[Chap. 8.54]{Watson}, $v^{-2/3}J_{v}^{'}(v)$ is increasing as a function of real $v$ and converges to $\frac{3^{1/6}\substigamma(2/3)}{2^{1/3}\pi}\approx 0.41085$. Consequently,
\begin{align}\label{bound:derivativeBesselj0to1}	
	\max_{0\leq t\leq v}|J_v^{'}(t)|=J^{'}_v(v)  \leq \frac{3^{1/6}\substigamma(2/3)}{2^{1/3}\pi} v^{-2/3}.
\end{align}
\end{enumerate}
Moreover, for $0<t<v$, $J_v(t)$ is bounded by $(\nicefrac{t}{2})^{v}/\substigamma(v+1)$, which gives a sharper estimate near zero. This follows from the product representation
\begin{align}\label{eqnC5:multiplicationJv}
J_v(t)=\frac{t^v}{2^v\substigamma(v+1)} \prod_{n=1}^{\infty} \left(1-\frac{t^2}{j^2_{v,n}}\right).
\end{align}
Here $j_{v,n}$ denotes the $n$th positive zero of $J_v$; all these zeros exceed $v$ when $v>0$.

%\begin{enumerate}
%	\item  Let $v\in\R_{+}$, then\cite[Eqn 9.5.20]{abramowitz72}
%	\begin{align}\label{bound:maxJv}
%		\max_{x}|J_v(x)|& \sim 0.6748851 v^{-1/3}(1-0.36422 v^{-2/3}+0.02918 v^{-4/2}+\cdots)\\  \nonumber
%		&= \bigo(v^{-1/3}) \quad  (v\to \infty).		\end{align}
%	\item \cite[Chap 8.54]{Watson} $v^{-1/3}J_v(v)$, as a function of real $v$ is increasing and converges to $\frac{\substigamma(1/3)}{2^{2/3}3^{1/3}\pi}\approx 0.44731$. As a result
%	\begin{align}\label{bound:inpBesselj}	
%		\max_{0<x<1}|J_v(x)| \leq \frac{\substigamma(1/3)}{2^{2/3}3^{1/3}\pi} v^{-1/3}
%		\end{align}
%	\item \cite[Chap 8.54]{Watson} $v^{-2/3}J_{v}^{'}(v)$, as a function of real $v$ is increasing and converges to $\frac{3^{1/6}\substigamma(2/3)}{2^{1/3}\pi}\approx 0.41085$. As a result
%	\begin{align}\label{bound:derivativeBesselj0to1}	
%	J^{'}_v(v)  \leq \frac{3^{1/6}\substigamma(2/3)}{2^{1/3}\pi} v^{-2/3}
%	\end{align}
%	\item Let $v\in\R_{+}$ and $0<b\leq a$
%	\begin{align*}\label{bound:maxJv2}
%		\int_{0}^{\infty}\frac{J_v(at)J_v(bt)}{t}\diffsymbol t=\frac{1}{2v}(b/a)^{v}.
%	\end{align*}
%	\item \cite[P95]{FrankBowman}
%	\begin{align*}
%		n\int_{x}^{\infty} \frac{J_n^2(t)}{t}\diffsymbol t =\frac{1}{2}J_0^2(x)+J_1^2(x)+\cdots +J_{n-1}^2(x)+\frac{1}{2}J_n^2(x).
%	\end{align*}
%\end{enumerate}
For $t\in\R_{+}$, $J_v(t)$ attains its maximum near $t=v+\bigo(v^{1/3})$, and the magnitude of this maximum satisfies (see \cite[Eq. (9.5.20)]{abramowitz72} and \cite{Landau_BesselBounds})
\begin{align}\label{bound:maxJv}
\max_{t>0}|J_v(t)|& \simeq 0.674885\cdots v^{-1/3} = \bigo(v^{-1/3}) \quad  (v\to \infty).		\end{align}
%(1-0.36422 v^{-2/3}+0.02918 v^{-4/2}+\cdots)\\  \nonumber
%	&= \bigo(v^{-1/3}) \quad  (v\to \infty).		\end{align}
For $t\geq sv$, with fixed $s>1$, $|J_v(t)|$ can be bounded using the modulus function $M_v$ (see, e.g., \cite[Chaps. 8.12 and 13.75]{Watson}):
\begin{align}\label{bound:maxJv_Larget}
%	J_v^2(t)\leq M^2_{v}(t)\lesssim \frac{2}{\pi t}\left(1-\frac{v^2-1/4}{t^2}\right)^{-1/2}\leq \frac{2{C}_{s}}{\pi t} %(\frac{s}{\sqrt{s^2-1}}).		%\end{align}
J_v^2(t)\leq M^2_{v}(t)\lesssim \frac{2}{\pi t}\left(1+\frac{1}{2}\frac{v^2}{(vs)^2}+\frac{1}{2}\frac{3}{4}\frac{v^2\cdot v^2}{(vs)^4}+\cdots\right)\leq \frac{2 {C}_{s}}{\pi t}.		
\end{align}
Here $C_s$ is a constant depending only on $s$, and $M_v^2(t)=J_v^2(t)+Y_v^2(t)$, where $Y_v$ denotes the Bessel function of the second kind.

We will use the following two definite integrals for Bessel functions, which are also known as orthogonality relations. For $v>0$ and $0<b\leq a$,
\begin{align}\label{bound:maxJv2}
\int_{0}^{\infty}\frac{J_v(at)J_v(bt)}{t}\diffsymbol t=\frac{1}{2v}(b/a)^{v}, && \int_{0}^{\infty}J_v(at)J_v(bt)t\diffsymbol t=\frac{\pi}{a^2}\delta(a-b),
\end{align}
where $\delta(a-b)$ denotes the Dirac delta function. The integrability of~\eqref{bound:maxJv2} is guaranteed by~\eqref{eqnC5:approxJv1} and~\eqref{eqnC5:multiplicationJv}.

Finally, we recall the recurrence formulas for Bessel functions (cf. \cite[Chap. 3.2, Eq. (3)-(4)]{Watson}). They are fundamental for treating radial Bessel functions associated with different spherical degrees simultaneously.
%\begin{align*}
%	\begin{cases}
%		J_{v-1}(t)+J_{v+1}(t)=\frac{2v}{t}J_v(t)\\
%		J_{v-1}(t)-J_{v+1}(t)=2 \frac{\diffsymbol}{\diffsymbol t}J_v(t)
%	\end{cases}
%\end{align*}
%by rearranging the terms, this yields
\begin{subnumcases}{}
J_{v-1}(t)=	\frac{\diffsymbol}{\diffsymbol t}J_v(t)+\frac{v}{t}J_{v}(t),  \label{eqnC5:recurrenceformula-1}\\
J_{v+1}(t)=	-\frac{\diffsymbol}{\diffsymbol t}J_v(t)+\frac{v}{t}J_{v}(t). \label{eqnC5:recurrenceformula-2}
\end{subnumcases}
\section{Asymptotic behavior of the reproducing kernel along the diagonal}%of Spherical harmonic-Bessel band-limited space along the diagonal}
\label{secC5:sec2}
\subsection{The case $d=2$}%$\mathbb{R}^2$}
The purpose of this section is to identify the limiting radial profile of the normalized diagonal reproducing kernel and thereby determine the local spectral density of the SFB truncation spaces.
As mentioned above, we are interested in the spatial variation of $\KLK(x,x)$. %Remembering in this setup, we have two parameters $\degAInCFive$ and $K$ to tune. 
However, rather than deriving an exact formula for $\KLK(x,x)$ in terms of $\degAInCFive$ and $K$, we focus on the asymptotic behavior of $\KLK(x,x)$ as both $\degAInCFive$ and $K$ tend to infinity.
%they satisfy certain relations
Theorems \ref{thm:characterizeKLKdim2} and \ref{thm:convergenceofshbKtoW} show that, if $\degAInCFive$ is \emph{proportional to} $K$, then the spatial fluctuations of $\KLK(x,x)$ have a stable asymptotic profile, in the sense that $\lim_{\degAInCFive,K\to\infty}\KLK(x,x)/K^d$ exists. The relevant parameter is the limiting ratio $\kappa\in(0,\infty)$, with $\degAInCFive/K\to\kappa$. In this subsection, we begin with the simpler case $d=2$.

\begin{defi}\label{def:expressiong}
Define $\functionInCFiveFA:[0,\infty)\to\R$ by
\begin{align}\label{eqn:defU}
\functionInCFiveFA(r)=
\begin{cases}
1/2 &\quad ( r\leq 1), \\ 
1/2-\frac{1}{\pi}\mathrm{arcsec} (r) &\quad (r>1).
\end{cases}
\end{align}
\end{defi}

\begin{defi}\label{def:dilation}
Let $f$ be a function on $\R_{+}$ and let $\kappa>0$. We denote the dilation of $f$ by
\begin{align*}
f_{\langle \kappa \rangle}(r):=f(\kappa^{-1} r).
\end{align*} 
\end{defi}

%The formulation of $\functionInCFiveFA$ then comes form the following observation.
The form of $\functionInCFiveFA(r)$ is motivated by the following observation, which plays a central role throughout this paper.
\begin{prop}\label{keyequation}
Let $r>0$ and $v_0\geq0$ be fixed, and let $U$ be the function defined in~\eqref{eqn:defU}. Then
\begin{align}\label{eqnC5:keyequation}
\lim_{ \degAInCFive\to\infty} \sum_{\degaInCFive=0}^{\degAInCFive} J_{v_0+\degaInCFive}^2(\degAInCFive r) = \functionInCFiveFA(r).
\end{align} 
\end{prop}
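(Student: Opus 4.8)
The plan is to analyze the sum $S_{\degAInCFive}(r) := \sum_{\degaInCFive=0}^{\degAInCFive} J_{v_0+\degaInCFive}^2(\degAInCFive r)$ by splitting the index range according to where the argument $\degAInCFive r$ sits relative to the order $v_0 + \degaInCFive$, since the behavior of $J_v(t)$ changes dramatically across the turning point $t \approx v$. Concretely, for a fixed $r$, the order $v_0 + \degaInCFive$ is less than the argument $\degAInCFive r$ precisely when $\degaInCFive < \degAInCFive r - v_0$ (roughly $\degaInCFive \lesssim \degAInCFive r$), i.e. the Bessel function is in its oscillatory regime; for $\degaInCFive$ beyond this threshold, $v_0 + \degaInCFive > \degAInCFive r$ and the Bessel function is in its exponentially small ``pre-turning-point'' regime. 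The guess is that each term in the oscillatory regime contributes an average value, and summing these averages produces a Riemann sum converging to the stated integral-like expression.

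First I would handle the oscillatory regime. For $\degaInCFive$ with $v_0 + \degaInCFive \leq (1-\epsilon)\degAInCFive r$, write $v = v_0 + \degaInCFive$, $t = \degAInCFive r$, so $t \geq v/(1-\epsilon)$, i.e. $t = s v$ with $s > 1$ bounded away from $1$; here I would use the uniform asymptotic $J_v^2(t) \approx \frac{2}{\pi\sqrt{t^2 - v^2}}\cdot(\text{oscillatory factor with average } 1/2)$, which is the refinement of~\eqref{eqnC5:approxJv1} valid away from the turning point (Debye-type asymptotics, controlled via the modulus bound~\eqref{bound:maxJv_Larget}). Replacing $J_v^2(\degAInCFive r)$ by its local average $\frac{1}{\pi\sqrt{(\degAInCFive r)^2 - (v_0+\degaInCFive)^2}}$ and summing over $\degaInCFive$ from $0$ to roughly $\degAInCFive r$ gives a Riemann sum for $\int_0^{r} \frac{1}{\pi \sqrt{r^2 - u^2}}\,\diffsymbol u$ after the substitution $u = \degaInCFive/\degAInCFive$ (the $v_0$ shift being negligible as $\degAInCFive \to \infty$). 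When $r \leq 1$ the sum runs over the full range $0 \le \degaInCFive \le \degAInCFive$ but the argument $\degAInCFive r \le \degAInCFive$ means the upper terms are in the non-oscillatory regime; when $r > 1$, only $\degaInCFive \lesssim \degAInCFive r$ lie in the oscillatory regime while $\degaInCFive$ up to $\degAInCFive$ contribute the tail. Computing the elementary integral: $\int_0^{\min(r,1)}\frac{\diffsymbol u}{\pi\sqrt{r^2-u^2}} = \frac{1}{\pi}\arcsin(\min(r,1)/r)$, which equals $\frac{1}{\pi}\cdot\frac{\pi}{2} = \frac12$ for $r \leq 1$ and $\frac{1}{\pi}\arcsin(1/r) = \frac12 - \frac{1}{\pi}\mathrm{arcsec}(r)$ for $r > 1$, matching $\functionInCFiveFA(r)$ exactly.

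Next I would show the complementary ``pre-turning-point'' tail is negligible: for $\degaInCFive$ with $v_0 + \degaInCFive > \degAInCFive r$ (only relevant when $r > 1$, or near the turning point when $r \le 1$), use~\eqref{eqnC5:multiplicationJv} / the bound $J_v(t) \le (t/2)^v/\substigamma(v+1)$ together with~\eqref{bound:inpBesselj} to get exponential decay in $\degaInCFive$ once $v_0 + \degaInCFive$ exceeds $\degAInCFive r$ by a definite multiplicative factor, and handle the $O(\degAInCFive^{1/3})$-wide transition window around $\degaInCFive \approx \degAInCFive r - v_0$ crudely using~\eqref{bound:maxJv}: there are $O(\degAInCFive^{1/3})$ such terms each of size $O(\degAInCFive^{-2/3})$ (since the relevant order is $\asymp \degAInCFive$), contributing $O(\degAInCFive^{-1/3}) \to 0$. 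Finally I would make the Riemann sum convergence rigorous by controlling the error in the Debye asymptotic uniformly in $\degaInCFive$ over the regime $s \ge 1 + \epsilon$, then letting $\epsilon \to 0$ at the end via a monotonicity or dominated-convergence argument on the integral $\int_0^{r\wedge 1}\frac{\diffsymbol u}{\pi\sqrt{r^2 - u^2}}$ near the (integrable) singularity at $u = r$.

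The main obstacle I anticipate is the transition region $\degaInCFive \approx \degAInCFive r - v_0$ where neither the oscillatory Debye expansion nor the small-argument bound is sharp, and — relatedly — controlling the Riemann-sum error uniformly as the cutoff parameter $\epsilon$ shrinks, since the summand's local average $\frac{1}{\pi\sqrt{(\degAInCFive r)^2-(v_0+\degaInCFive)^2}}$ blows up (integrably) as $\degaInCFive/\degAInCFive \to r$. The cleanest way around this is probably a two-parameter limit: first fix $\epsilon$, establish convergence of the truncated sum to $\int_0^{(1-\epsilon)(r\wedge 1)\cdots}$, bound the discarded near-turning-point block by $O(\epsilon) + o(1)$ using the worst-case envelope $\max_t |J_v(t)|^2 = O(v^{-2/3})$ summed over an $O(\epsilon \degAInCFive)$-length block of orders $\asymp \degAInCFive$, which is $O(\epsilon \degAInCFive \cdot \degAInCFive^{-2/3}) = O(\epsilon \degAInCFive^{1/3})$ — this last bound is too weak, so one actually needs the $\frac{1}{\pi\sqrt{t^2-v^2}}$ envelope right up to the turning point, i.e. a uniform bound $J_v^2(t) \lesssim \min\{v^{-2/3}, (t^2-v^2)^{-1/2}\}$, whose sum over the block is $O(\sqrt{\epsilon})$, and then send $\epsilon \to 0$.
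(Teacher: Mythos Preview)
Your approach is essentially correct but genuinely different from the paper's. The paper does \emph{not} attack the sum directly via Debye asymptotics and a Riemann sum. Instead it multiplies the two recurrence relations~\eqref{eqnC5:recurrenceformula-1}--\eqref{eqnC5:recurrenceformula-2} by $J_v$, subtracts, and telescopes to obtain the closed identity
\[
\sum_{\degaInCFive=0}^{\degAInCFive} J_{v_0+\degaInCFive}^2(x)=\tfrac12 J_{v_0}^2(x)+\tfrac12 J_{v_0+\degAInCFive}^2(x)-v_0\!\int_x^\infty\!\tfrac{J_{v_0}^2}{t}\,\diffsymbol t+(v_0+\degAInCFive)\!\int_x^\infty\!\tfrac{J_{v_0+\degAInCFive}^2}{t}\,\diffsymbol t,
\]
then sets $x=\degAInCFive r$ and shows the first three terms vanish in the limit, reducing everything to $\lim_{v\to\infty} v\int_r^\infty \tfrac{J_v^2(vt)}{t}\,\diffsymbol t$. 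That integral is then computed for $r\le 1$ and $r>1$ separately, the latter via the substitution $t=\sec\beta$ and~\eqref{eqnC5:secsubstitute}. This telescoping trick buys a lot: the oscillatory averaging happens inside a genuine \emph{integral} $\int \cos^2(\ldots)\,\diffsymbol\beta$, where it is immediate, rather than inside a discrete sum over orders.

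Your route---replacing each $J_{v_0+\degaInCFive}^2(\degAInCFive r)$ by $\tfrac{1}{\pi\sqrt{(\degAInCFive r)^2-(v_0+\degaInCFive)^2}}$ and recognizing a Riemann sum for $\int_0^{r\wedge 1}\tfrac{\diffsymbol u}{\pi\sqrt{r^2-u^2}}$---gives the right answer, and your turning-point and tail analyses are sound. The one step you gloss over is \emph{why} the $\cos^2(\cdot)$ factor in the Debye expansion averages to $\tfrac12$ when you sum over $\degaInCFive$. This is not automatic: with $t=\degAInCFive r$ fixed and $v=v_0+\degaInCFive$ varying, the phase is $\sqrt{t^2-v^2}-v\arccos(v/t)$, whose $v$-derivative is $-\arccos(v/t)$, bounded away from zero away from the turning point---so an Abel-summation or van-der-Corput-type argument does the job, but it must be said. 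Your phrase ``controlling the error in the Debye asymptotic uniformly'' addresses the $O(v^{-1})$ remainder, not this cancellation; they are separate issues. Once that is handled, your proof goes through, with the advantage of being conceptually transparent (it explains \emph{where} the arcsec comes from), at the cost of more bookkeeping than the paper's telescoping identity.
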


\begin{proof}

We start from the recurrence formulas for $J_v$. Multiplying both sides of~\eqref{eqnC5:recurrenceformula-2} and~\eqref{eqnC5:recurrenceformula-1} by $J_v(t)$ and subtracting~\eqref{eqnC5:recurrenceformula-2} from~\eqref{eqnC5:recurrenceformula-1}, we obtain
\begin{align}
J_{v-1}(t)J_v(t) - J_{v}(t)J_{v+1}(t)= 2 \frac{\diffsymbol}{\diffsymbol t}J_{v}(t) J_{v}(t)  =\frac{\diffsymbol}{\diffsymbol t}\{J^2_{v}(t)\}.
\end{align}
Taking $v=v_0+\degaInCFive$ for $\degaInCFive=0,1,\cdots,\degAInCFive$ in the previous equation and summing over $\degaInCFive$ yields
\begin{align}
J_{v_0-1}(t)J_{v_0}(t) - J_{v_0+\degAInCFive}(t)J_{v_0+\degAInCFive+1}(t)= \frac{\diffsymbol}{\diffsymbol t}\{\sum_{\degaInCFive=0}^{\degAInCFive} J^2_{v_0+\degaInCFive}(t)\}.
\end{align}
Applying~\eqref{eqnC5:recurrenceformula-1} with $v=v_0$ and~\eqref{eqnC5:recurrenceformula-2} with $v=v_0+\degAInCFive$, we get
\begin{align}
\frac{1}{2} \frac{\diffsymbol}{\diffsymbol t}\{ J^2_{v_0}(t)+J^2_{v_0+\degAInCFive}(t)\}+ \frac{v_0}{t}J_{v_0}^2(t) -\frac{v_0+\degAInCFive}{t}J_{v_0+\degAInCFive}^2(t)= \frac{\diffsymbol}{\diffsymbol t}\{\sum_{\degaInCFive=0}^{\degAInCFive} J^2_{v_0+\degaInCFive}(t)\}.
\end{align}
Integrating both sides from $x$ to $\infty$ gives
\begin{align}\label{eqnC5:4termsofKeyeuquation}
\frac{1}{2}  J^2_{v_0}(x)+ \frac{1}{2} J^2_{v_0+\degAInCFive}(x)- v_0 \int_{x}^{\infty}\frac{J_{v_0}^2(t)}{t}\diffsymbol t +(v_0+\degAInCFive)\int_{x}^{\infty}\frac{J_{v_0+\degAInCFive}^2(t)}{t}\diffsymbol t= \sum_{\degaInCFive=0}^{\degAInCFive} J^2_{v_0+\degaInCFive}(x).
\end{align}

Now set $x=\degAInCFive r$ and let $\degAInCFive\to\infty$ (so that $x\to\infty$). Since $v_0$ is fixed, the first and third terms converge to zero by~\eqref{eqnC5:approxJv1}, and the second term converges to zero by~\eqref{bound:maxJv}. Thus, by~\eqref{bound:maxJv2},
\begin{align}\label{eqnC5:equvalientlimt}
\lim_{ \degAInCFive\to\infty} \sum_{\degaInCFive=0}^{\degAInCFive} J_{v_0+\degaInCFive}^2(\degAInCFive r) &=\lim_{v\to\infty} v\int_{vr}^{\infty}\frac{J_{v}^2(t)}{t}\diffsymbol t= \lim_{v\to\infty} \int_{r}^{\infty}\frac{vJ_{v}^2(vt)}{t} \diffsymbol t\\ \nonumber
&=\frac{1}{2}-\lim_{v\to\infty} v\int_{0}^{vr}\frac{J_{v}^2(t)}{t} \diffsymbol t=\frac{1}{2}-\lim_{v\to\infty} \int_{0}^{r}\frac{vJ_{v}^2(vt)}{t} \diffsymbol t,
\end{align} 
if the limits exist.

We now show that~\eqref{eqnC5:equvalientlimt} converges to $\functionInCFiveFA(r)$. Since $\functionInCFiveFA(r)$ behaves differently for $r\leq 1$ and $r>1$, we discuss the two ranges separately.

Suppose that $r\leq1$. For any $r_0<2/e$, we split the integral on the right-hand side of~\eqref{eqnC5:equvalientlimt} as
\begin{align}\label{eqnC5:equvalientlimt_rlt1_1}
\int_{0}^{r}\frac{vJ_{v}^2(vt)}{t} \diffsymbol t \leq \int_{0}^{r_0}\frac{vJ_{v}^2(vt)}{t} \diffsymbol t +\int_{r_0}^{r}\frac{vJ_{v}^2(vt)}{t} \diffsymbol t .
\end{align}
On the first interval, the product representation~\eqref{eqnC5:multiplicationJv} gives
\begin{align}\label{eqnC5:equvalientlimt_rlt1_2}
\int_{0}^{r_0} \frac{vJ^2_v(vt)}{t} \diffsymbol t \leq\int_{0}^{r_0} \frac{v}{t}[\frac{(vt)^v}{2^v\substigamma(v+1)}]^2dt= \int_{0}^{r_0} \frac{v^{2v+1} t^{2v-1}}{2^{2v}\substigamma^2(v+1)}\diffsymbol t=\frac{1}{2} \left[\frac{((vr_0)/2)^v}{\substigamma(v+1)}\right]^2,
\end{align}
which converges to zero as $v\to\infty$ by Stirling's formula.

On the second interval $[r_0,r]$, the integral is controlled by $\int_{0}^{1}vtJ_v^2(vt)\diffsymbol t$, which has a closed-form expression (see, e.g., \cite[Eq. (6.52)]{Bowman}).
%\begin{align*}
%	J_v(vt)\leq \frac{e^{-vF(0,t)}}{(1-x^2)^{1/4}\sqrt{2\pi v}}
%\end{align*}
%where $F(0,x)$ is a strictly decreasing function on $(0,1]$ and $F(0,r)>F(0,1)=0$. Thus 
%\begin{align*}
%	\int_{r_0}^{r} \frac{vJ^2_v(t)}{t} \diffsymbol t \leq \int_{r_0}^{r} \frac{e^{-2v F(0,r)}}{  2\pi }\frac{1}{t(1-t^2)^{1/2}}  \diffsymbol t
%\end{align*}
\begin{align}\label{eqnC5:equvalientlimt_rlt1_3}
\int_{r_0}^{r} \frac{vJ^2_v(vt)}{t} \diffsymbol t \leq (r_0)^{-2}v\int_{0}^{1} t J^2_v(vt)\diffsymbol t&=(r_0)^{-2}\frac{v}{2}[J^{'}_{v}(v)]^2=\frac{v[J^{'}_{v}(v)]^2 }{2r_0^2},
\end{align}
which is of order $\bigo(\tfrac{v}{(v^{2/3})^2})=\bigo(v^{-1/3})$ by~\eqref{bound:derivativeBesselj0to1}, and therefore converges uniformly to zero. Combining the two estimates gives
% since the integrand in r.h.s of~\eqref{eqnC5:equvalientlimt} is non-negative, 
%\begin{align*}
%	0\leq \int_{0}^{r}\frac{J_{v}^2(vt)}{t} \diffsymbol t \leq \int_{0}^{r_0}+\int_{r_0}^{r}\frac{J_{v}^2(vt)}{t} \diffsymbol t .
%\end{align*}
%for any small positive number $r_0$.
%The first part, we can use the property that $J_{v}(vt)$ for any fixed $t\leq 1$ is a positive decreasing function of $v$\cite[P254]{Watson}, on the second integral domain we use~\eqref{eqnC5:boundJv2} to bound $J_{v}$, we get
%\begin{align*}
%\lim_{v\to\infty} \int_{0}^{r}\frac{J_{v}^2(vt)}{t} \diffsymbol t \leq \int_{0}^{r_0}\frac{J_1^2(t)}{t}  \diffsymbol t+\lim_{v\to\infty}  \left(\frac{\substigamma(1/3)}{2^{2/3}3^{1/3}\pi}\right) v^{-2/3} \log(\frac{r}{r_0})\leq \int_{0}^{r_0}\frac{J_1^2(t)}{t}  \diffsymbol t .
%\end{align*}
%since we can choose $r_0$ arbitrarily small, and since integral of $\tfrac{J_1^2(t)}{t}$ has no singularity at $0$, we have
\begin{align*}
\frac{1}{2}-\lim_{v\to\infty} \int_{0}^{r}\frac{J_{v}^2(vt)}{t} \diffsymbol t =\frac{1}{2}-0 =\functionInCFiveFA(r),
\end{align*}
uniformly for $r\leq 1$.
%or Chap. 8.22, Eq. (1)

For $r>1$, we use the substitution $t=\sec\beta$. By \cite[p.~382]{Olver:1997:ASF} or \cite[Chap. 8.41, Eq. (4)]{Watson},
\begin{align}\label{eqnC5:secsubstitute}
J_v(v\sec \beta)= \sqrt{\frac{2}{v \pi \tan \beta} }\left\{\cos[v(\tan\beta-\beta)-\frac{\pi}{4}]+\bigo(v^{-1})\right\},
\end{align}
which is valid uniformly for $t$ in any compact subset of $(1,\infty)$ (equivalently, for $\beta$ in a compact subset of $(0,\tfrac{\pi}{2})$). Thus, for fixed $r_0>1$,
\begin{align}
\lim_{v\to\infty}\int_{r}^{r_0}\frac{vJ_{v}^2(vt)}{t} \diffsymbol t& = \lim_{v\to\infty}\int_{\mathrm{arcsec} (r)}^{\mathrm{arcsec}(r_0)}\frac{2}{\pi} \cos^2\{v(\tan\beta-\beta-\frac{\pi}{4})\} \diffsymbol \beta +\bigo(v^{-1})\\ \nonumber
&=\frac{1}{\pi}\{\mathrm{arcsec}(r_0)-\mathrm{arcsec}(r)\}.
\end{align} 
Now, for $s>1$ and $r_0>s$, we have
$\int_{r_0}^{\infty}\frac{vJ_{v}^2(vt)}{t}dt \leq \int_{r_0}^{\infty} \frac{2 {C}_{s} v}{\pi vt^2}dt =\frac{2{C}_{s}}{\pi r_0}$ uniformly for $v\to\infty$, by~\eqref{bound:maxJv_Larget}. Thus
\begin{align*}
\lim_{v\to\infty}\int_{r}^{r_0}\frac{vJ_{v}^2(vt)}{t} \diffsymbol t = \frac{1}{\pi}\{\mathrm{arcsec}(r_0)-\mathrm{arcsec}(r)\} +\bigo(r_0^{-1}).
\end{align*} 
Letting $r_0\to \infty$ and using $\mathrm{arcsec}(r_0)\to \tfrac{\pi}{2}$ gives the desired estimate.
\end{proof}

\begin{rem}\label{remC5:remarkofKeyequation}
We also require uniform convergence in~\eqref{eqnC5:keyequation} on compact intervals $[a,1]$ and $[b,c]$ in $\R_{+}$, where $0<a<1<b<c<\infty$. These intervals are treated separately because, when $r=0$,
the first term on the left-hand side of~\eqref{eqnC5:4termsofKeyeuquation} becomes $J_0(0)=1$ for $v_0=0$, so~\eqref{eqnC5:keyequation} does not hold at $r=0$; see Figure \ref{fig:DemoForD2}. Near $r=1$, the term $vJ_v^2(vt)$ is not uniformly bounded near $t=1$; see~\eqref{bound:maxJv}. The first three terms on the left-hand side of~\eqref{eqnC5:4termsofKeyeuquation} converge uniformly on any compact subset of $(0,\infty)$. On the two intervals mentioned above, the uniform convergence in~\eqref{eqnC5:keyequation} therefore depends only on the convergence of $\lim_{v\to\infty}\int_{0}^{r}\frac{vJ_{v}^2(vt)}{t} \diffsymbol t$. For $[a,1]$, uniform convergence was already discussed in~\eqref{eqnC5:equvalientlimt_rlt1_1}-\eqref{eqnC5:equvalientlimt_rlt1_3}. On $[b,c]$, uniform convergence in~\eqref{eqnC5:keyequation} follows from a corollary of the Arzelà--Ascoli theorem, using the fact that $\int_{0}^{r}\frac{vJ_{v}^2(vt)}{t} \diffsymbol t$ is uniformly equicontinuous on $[b,c]$ because its integrand is uniformly bounded by~\eqref{bound:maxJv_Larget}. Further estimates for the convergence and its rate are given in \cite{Dunster2017}, using Airy functions.% thus will not being explained in the present work in detail.  %far too technical than the scope of our paper. 

We will also use the bound $\sum_{\degaInCFive=0}^{\degAInCFive}J_{v_0+\degaInCFive}^2(x)\leq1$, valid for all $v_0\geq0$, $\degAInCFive\in\N_0$, and $x\geq0$ (for example, by combining~\eqref{eqnC5:4termsofKeyeuquation}, with $\degAInCFive\to\infty$, and~\eqref{bound:maxJv2}).
\end{rem}

With~\eqref{keyequation} in hand, we can derive the asymptotic behavior of $\KLK(x,x)$. For any fixed $x$, $\KLK(x,x)$ grows as $\degAInCFive$ and $K$ increase; however, the growth rate is proportional to $K^d$. In particular, we obtain the following convergence result for the $K^{-d}$-normalized diagonal kernel.
\begin{thm}\label{thm:characterizeKLKdim2}
Let $\KLK$ be the reproducing kernel of $\spaceInCFive_{\degAInCFive, K}$ on $\R^2$. Let $\{\degAInCFive_i\}_{i\in\N}\subset\N_0$ and $\{K_i\}_{i\in\N}\subset\R_+$ satisfy $\degAInCFive_i\to\infty$, $K_i\to\infty$, and $\degAInCFive_i/K_i\to\kappa\in(0,\infty)$. Then
\begin{align}\label{eqnC5:limitdiagonalKd=2}
\lim_{i\to\infty}\frac{\mathcal K_{\degAInCFive_i,K_i}(x,x)}{K_i^2}
= \frac{1}{\pi} \frac{1}{\|x\|^2}\int_{0}^{\|x\|} t\functionInCFiveFA_{\langle \kappa \rangle}(t) \diffsymbol t.
\end{align}
At $x=0$, the right-hand side is understood by continuous extension, with value $1/(4\pi)$.
\end{thm}
\begin{proof}
We first assume $x\neq0$ and prove the assertion under the exact relation $\degAInCFive=\kappa K$, with $\kappa K\in\N_0$.
Taking $d=2$ in~\eqref{eqnC5:diagonalK}, we have
\begin{align} \label{eqnC5:analysisdiagonalKDim2}
\KLK(x,x)&=\frac{1}{2\pi}\int_{0}^{K}  \sum_{\degaInCFive =0}^{\degAInCFive} c_{\degaInCFive} J_{\degaInCFive}^2(k\|x\|) k \diffsymbol k\\ \nonumber
&=\frac{1}{\pi}\int_{0}^{K}  \sum_{\degaInCFive =0}^{\degAInCFive}  J_{\degaInCFive}^2(\frac{k}{\degAInCFive}\degAInCFive\|x\|) k \diffsymbol k -\frac{1}{2\pi}\int_{0}^{K}   J_0^2(k\|x\|) k \diffsymbol k,
\end{align}
where $c_{\degaInCFive}=\mathrm{dim}(H_{\degaInCFive}^2)$ equals $1$ for $\degaInCFive=0$ and $2$ otherwise.
%\begin{align*}
%	{}_2c_l=
%	\begin{cases}
%		1\quad (\degaInCFive =0)\\
%		2\quad (l\geq 1)
%	\end{cases}
%\end{align*}

Dividing both sides of~\eqref{eqnC5:analysisdiagonalKDim2} by $K^2$ and taking the limit $K\to\infty$, the second term on the right-hand side of~\eqref{eqnC5:analysisdiagonalKDim2} is of order $\bigo(K)$ before normalization and therefore vanishes after division by $K^2$. We obtain
%	\begin{align} \label{eqnC5:analysisdiagonalKDim2_2}
%		\lim_{L,K\to\infty; \frac{\degAInCFive}{K}\to \kappa}\frac{\KLK(x,x)}{K^2}&=\lim_{L,K\to\infty; \frac{\degAInCFive}{K}\to \kappa}\frac{1}{\pi K^2}\int_{0}^{K}  \sum_{\degaInCFive =0}^{\degAInCFive}  J_{\degaInCFive}^2(\frac{k}{\degAInCFive}L\|x\|) k \diffsymbol k \\ \nonumber
%		&=\frac{1}{\pi K^2}\int_{0}^{K}  \functionInCFiveFA(\frac{k\|x\|}{\kappa K}) k \diffsymbol k =\frac{1}{\pi K^2}\int_{0}^{K}  \functionInCFiveFA_{\langle \kappa \rangle}(\frac{k\|x\|}{ K}) k \diffsymbol k\\ \nonumber
%		&\overset{t=\tfrac{\|x\|}{K}k}{=} \frac{1}{\pi \|x\|^2}\int_{0}^{\|x\|} t\functionInCFiveFA_{\langle \kappa \rangle}(t) \diffsymbol t.
%	\end{align}
\begin{align} \label{eqnC5:analysisdiagonalKDim2_2}
\lim_{\degAInCFive,K\to\infty; \frac{\degAInCFive}{K}= \kappa}\frac{\KLK(x,x)}{K^2}&=\lim_{\degAInCFive,K\to\infty; \frac{\degAInCFive}{K}= \kappa}\frac{1}{\pi K^2}\int_{0}^{K}  \sum_{\degaInCFive =0}^{\degAInCFive}  J_{\degaInCFive}^2(k\|x\|) k \diffsymbol k \\ 
%		&=\frac{1}{\pi K^2}\int_{0}^{K}  \functionInCFiveFA(\frac{k\|x\|}{\kappa K}) k \diffsymbol k =\frac{1}{\pi K^2}\int_{0}^{K}  \functionInCFiveFA_{\langle \kappa \rangle}(\frac{k\|x\|}{ K}) k \diffsymbol k\\ \nonumber
\nonumber	&\overset{\scriptstyle t=(\frac{\|x\|}{K})k}{=} \lim_{K\to \infty}\frac{1}{\pi K^2}\int_{0}^{\|x\|} \sum_{\degaInCFive =0}^{\kappa K}  J_{\degaInCFive}^2(Kt) \left(\frac{K}{\|x\|} t\right)\left(\frac{K}{\|x\|}\right)\diffsymbol t \\
\nonumber &=\lim_{K\to \infty}\frac{1}{\pi K^2}\int_{0}^{\|x\|} \left[\sum_{\degaInCFive =0}^{\kappa K}  J_{\degaInCFive}^2(\kappa K\tfrac{t}{\kappa})\right] \left(\frac{K}{\|x\|} t\right) \left(\frac{K}{\|x\|}\right) \diffsymbol t \\
\nonumber &= \lim_{K\to \infty}\frac{1}{\pi \|x\|^2}\int_{0}^{\|x\|} \sum_{\degaInCFive =0}^{\kappa K}  J_{\degaInCFive}^2(\kappa K\tfrac{t}{\kappa})t\diffsymbol t  \\
\nonumber&= \frac{1}{\pi \|x\|^2}\int_{0}^{\|x\|} t\functionInCFiveFA(\tfrac{t}{\kappa}) \diffsymbol t=  \frac{1}{\pi \|x\|^2}\int_{0}^{\|x\|} t\functionInCFiveFA_{\langle \kappa \rangle}(t) \diffsymbol t.
\end{align}
In the penultimate equality, we use the Lebesgue dominated convergence theorem, together with~\eqref{eqnC5:keyequation} for $t>0$ and the uniform bound $\sum_{\degaInCFive=0}^{\degAInCFive} J_{v_0+\degaInCFive}^2(x)\leq 1$; the value at $t=0$ is irrelevant for the integral.

We now pass to the limit case $\degAInCFive_i/K_i\to\kappa\in(0,\infty)$. For $0<\eta<\kappa$, set
\[
\begin{aligned}
\degAInCFive_i^-&:=\lfloor(\kappa-\eta)K_i\rfloor,
&K_i^-&:=\degAInCFive_i^-/(\kappa-\eta),\\
\degAInCFive_i^+&:=\lceil(\kappa+\eta)K_i\rceil,
&K_i^+&:=\degAInCFive_i^+/(\kappa+\eta).
\end{aligned}
\]
For all sufficiently large $i$, these pairs bound
$(\degAInCFive_i,K_i)$ componentwise, and $K_i^\pm/K_i\to1$.
Denote the exact-ratio limit above by $G_x(\kappa)$. Monotonicity in both
cutoffs then gives
\[
G_x(\kappa-\eta)\leq
\liminf_{i\to\infty}\frac{\mathcal K_{\degAInCFive_i,K_i}(x,x)}{K_i^2}
\leq\limsup_{i\to\infty}\frac{\mathcal K_{\degAInCFive_i,K_i}(x,x)}{K_i^2}
\leq G_x(\kappa+\eta).
\]
Letting $\eta\downarrow0$ and using the continuity of $G_x$, which follows
from dominated convergence, proves~\eqref{eqnC5:limitdiagonalKd=2}.
At $x=0$, the result follows directly from~\eqref{eqnC5:diagonalK}, since only the degree-zero term contributes and $\mathcal K_{\degAInCFive,K}(0,0)=K^2/(4\pi)$.
\end{proof}
\begin{rem}
Observe that
\begin{align*}
\frac{1}{\pi \|x\|^2}\int_{0}^{\|x\|} t\functionInCFiveFA_{\langle \kappa \rangle}(t) \diffsymbol t = \frac{1}{\pi \|\kappa^{-1}x\|^2}\int_{0}^{\|\kappa^{-1}x\|} t\functionInCFiveFA(t) \diffsymbol t.
\end{align*}
Thus, for the exact-ratio subfamily,
\begin{align*}
\lim_{\degAInCFive,K\to\infty; \frac{\degAInCFive}{K}= \kappa}\frac{\KLK(x,x)}{K^2}=	\lim_{K\to\infty}\frac{\mathcal{K}_{K,K}(\kappa^{-1}x,\kappa^{-1}x)}{K^2}.
\end{align*}

\end{rem}
\begin{figure}[!htbp]
\centering
\begin{subfigure}[t]{0.48\textwidth}
\centering
\includegraphics[width=\linewidth]{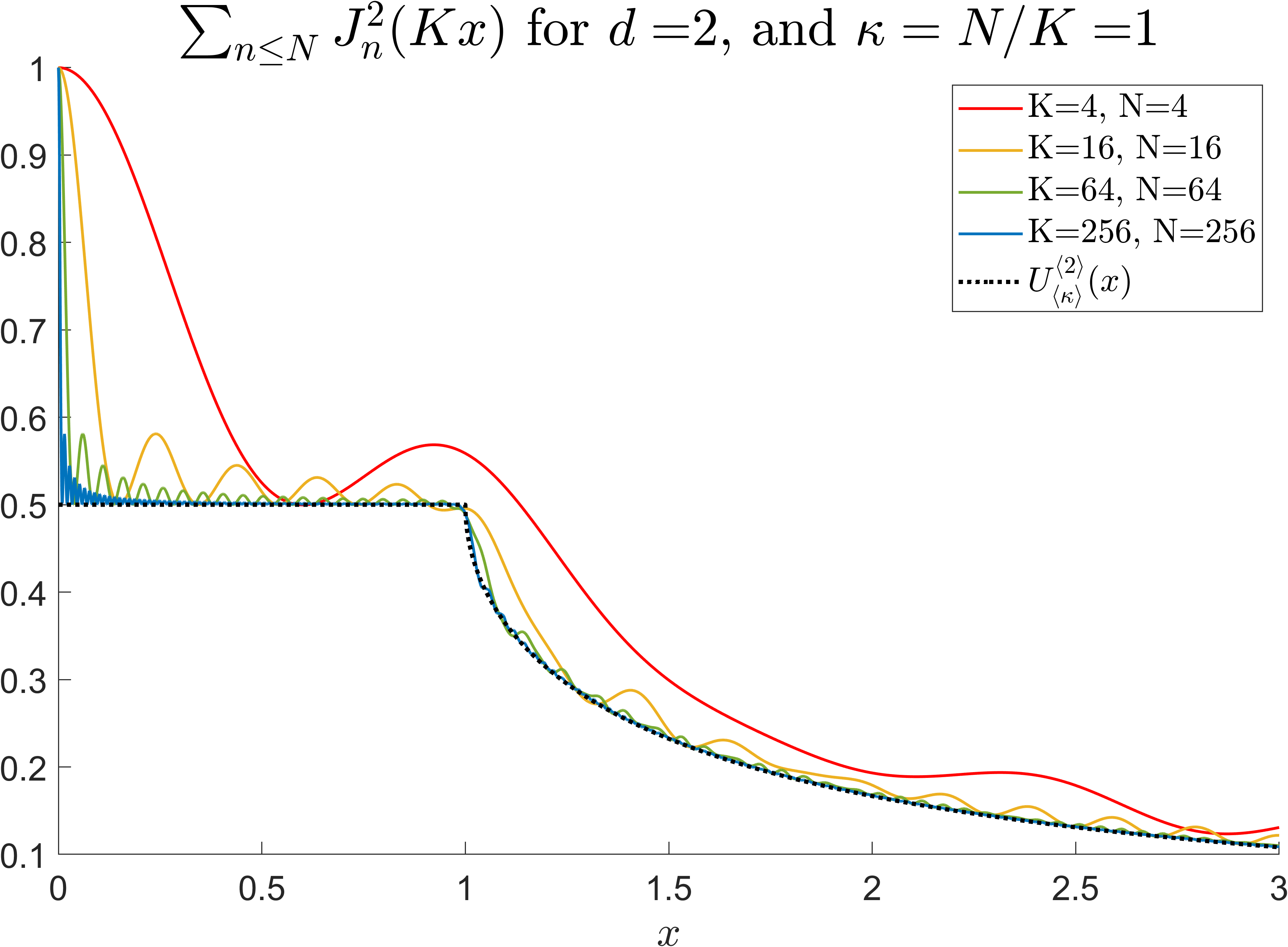}
\caption{Bessel sum, $\kappa=1$.}
\end{subfigure}
\hfill
\begin{subfigure}[t]{0.48\textwidth}
\centering
\includegraphics[width=\linewidth]{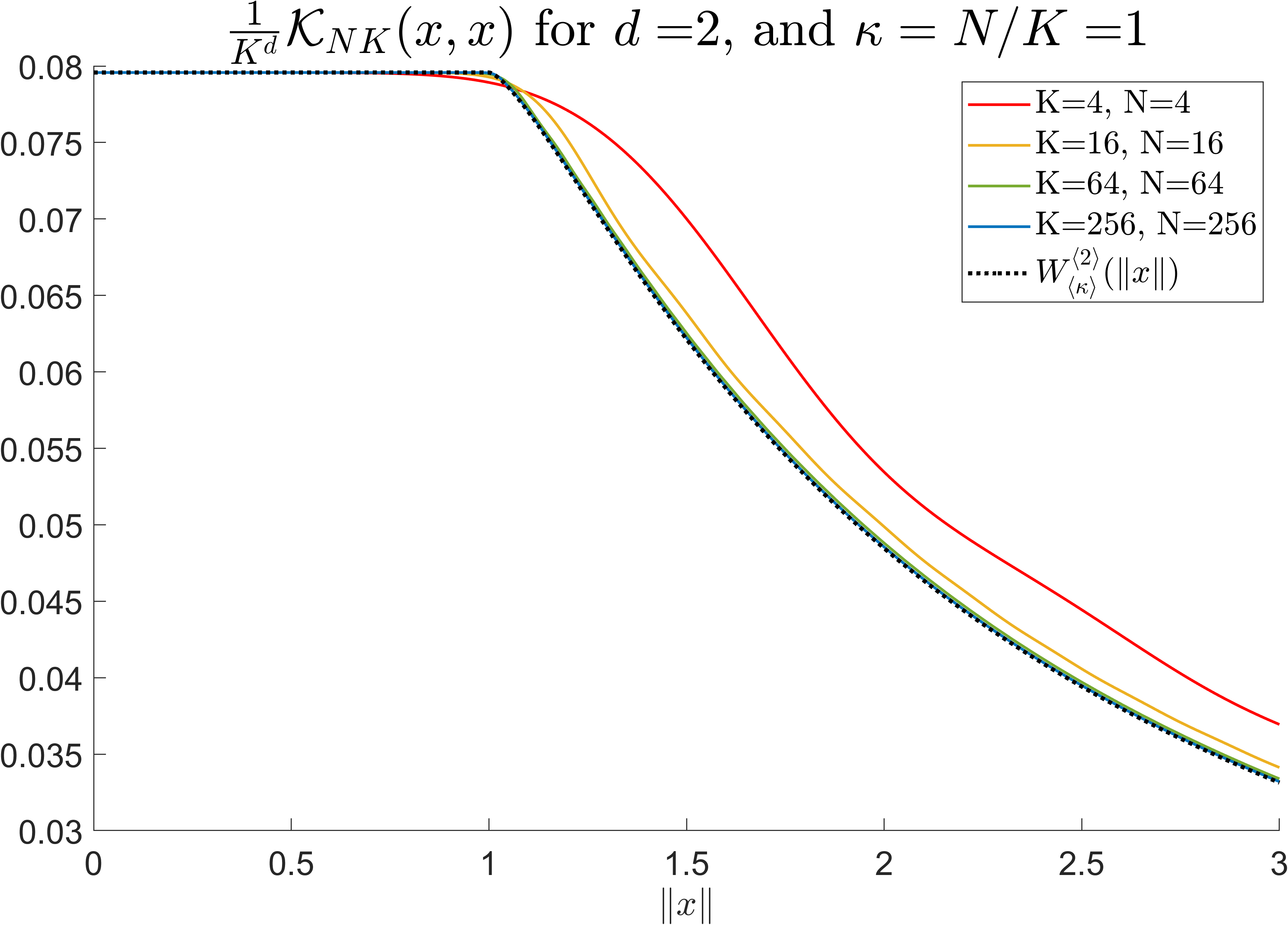}
\caption{Normalized diagonal kernel, $\kappa=1$.}
\end{subfigure}

\medskip
\begin{subfigure}[t]{0.48\textwidth}
\centering
\includegraphics[width=\linewidth]{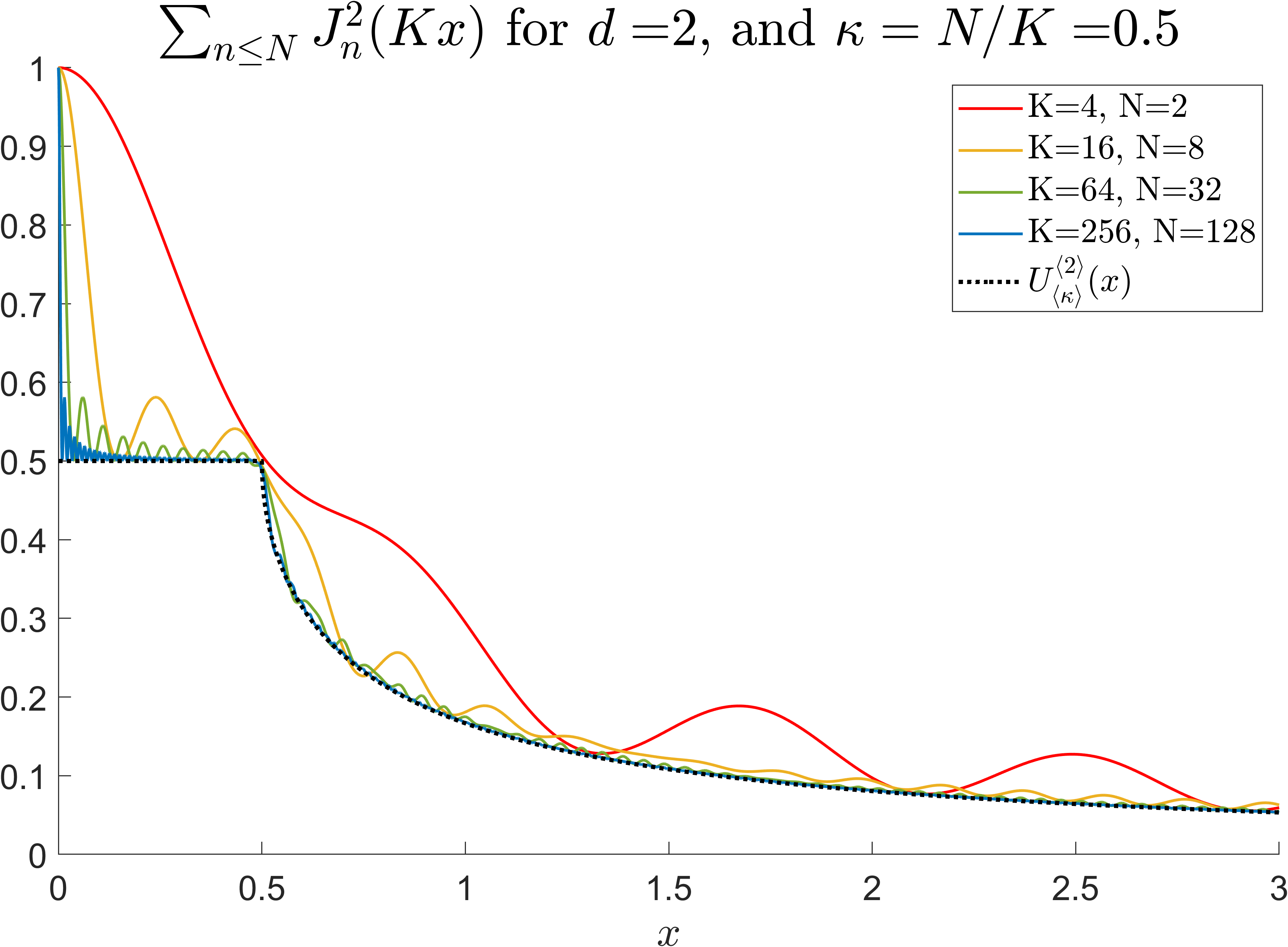}
\caption{Bessel sum, $\kappa=0.5$.}
\end{subfigure}
\hfill
\begin{subfigure}[t]{0.48\textwidth}
\centering
\includegraphics[width=\linewidth]{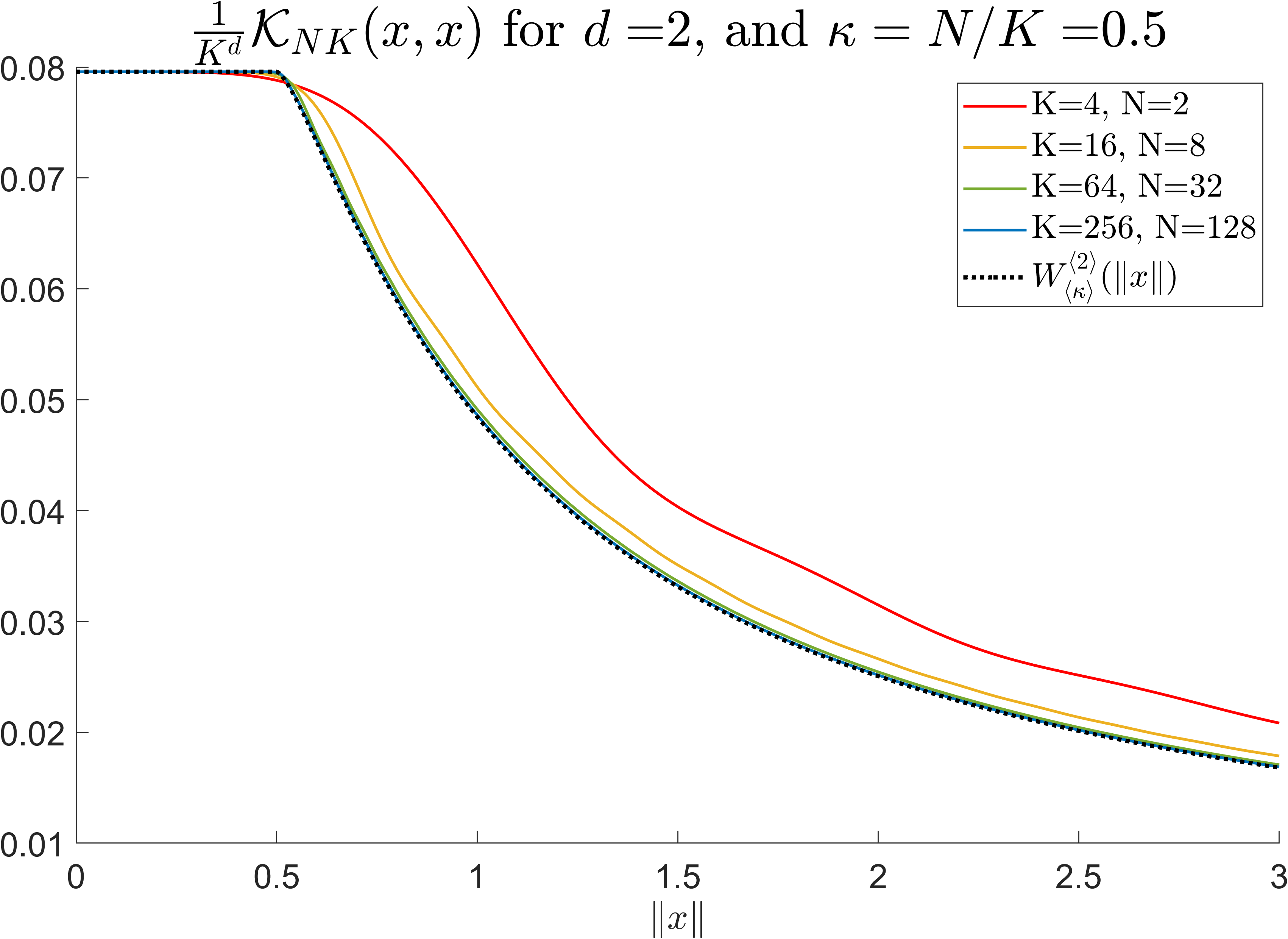}
\caption{Normalized diagonal kernel, $\kappa=0.5$.}
\end{subfigure}
\caption{Numerical convergence in dimension $d=2$. The left column illustrates the finite Bessel sums in~\eqref{eqnC5:keyequation}, and the right column illustrates the normalized diagonal kernel in~\eqref{eqnC5:limitdiagonalKd=2}. The top and bottom rows correspond to $\kappa=1$ and $\kappa=0.5$, respectively; the dotted curves are the limiting profiles.}
\label{fig:DemoForD2}
\end{figure}
\FloatBarrier

\subsection{The case $d\geq3$}

%Now we want to give similar characterizations of 
In this subsection, we extend Theorem \ref{thm:characterizeKLKdim2} to higher dimensions. The starting point is still the observation~\eqref{eqnC5:keyequation}, with some necessary modifications. In the rest of the paper, the role of the dimension $d$ will be emphasized. We define several functions and quantities related to $d$, using the superscript ${}^{\langle d \rangle}$ to indicate this dependence; this notation should not be confused with a power or a $d$-th derivative. %Similar to the previous section, we put a front subscript script $\K$ to emphasize the dimension of Euclidean space where the function lies in. 

We first recall the formula for $\KLK$ in dimension $d\geq3$. Writing ${C}^{\langle d\rangle}_{\degaInCFive}:=\mathrm{dim}(H_{\degaInCFive}^{d})$, we have
\begin{align}\label{eqnC5:defK\degAInCFive, Kxx}
\KLK(x,x)=\frac{(r_x)^{2-d}}{\mathrm{vol}(\Sphered)}\int_{0}^{K}  \sum_{\degaInCFive=0}^{\degAInCFive} {C}^{\langle d \rangle}_{\degaInCFive} J^2_{\degaInCFive+\frac{d-2}{2}}(kr_x) k \diffsymbol k.
\end{align}
Compared with the previous subsection, ${C}^{\langle d \rangle}_{\degaInCFive}$ is slightly more complicated for $d\geq3$. We need the following leading-term estimate:
\begin{align}\label{eqnC5:asympoticofdCl}
{C}^{\langle d \rangle}_{\degaInCFive}&=\binom{\degaInCFive+d-2}{d-2}+\binom{\degaInCFive+d-3}{d-2} =\frac{2}{\substigamma(d-1)}{\degaInCFive}^{d-2}+\bigo({\degaInCFive}^{d-3}).
\end{align}
Similarly, for the difference of ${C}^{\langle d \rangle}_{\degaInCFive}$, we have
%\begin{align*}
%	{C}^{\langle d \rangle}_{\degaInCFive+1}-{C}^{\langle d \rangle}_{\degaInCFive}&= \binom{\degaInCFive+1+d-2}{d-2}+\binom{\degaInCFive+1+d-3}{d-2} -\left[ \binom{\degaInCFive+d-2}{d-2}+\binom{\degaInCFive+d-3}{d-2} \right]\\
%	&=\binom{\degaInCFive+d-1}{d-2}-\binom{\degaInCFive+d-3}{d-2}=\binom{\degaInCFive+d-2}{d-3}+\binom{\degaInCFive+d-3}{d-3}\\
%	& =\frac{2}{\substigamma(d-2)}{\degaInCFive}^{d-3}+\bigo({\degaInCFive}^{d-4}).
%\end{align*}
\begin{align}
\nonumber	{C}^{\langle d \rangle}_{\degaInCFive+1}-{C}^{\langle d \rangle}_{\degaInCFive}
&=\binom{\degaInCFive+d-1}{d-2}-\binom{\degaInCFive+d-3}{d-2}=\binom{\degaInCFive+d-2}{d-3}+\binom{\degaInCFive+d-3}{d-3}\\
& =\frac{2}{\substigamma(d-2)}{\degaInCFive}^{d-3}+\bigo({\degaInCFive}^{d-4}).
\end{align}
Applying Abel's transformation (summation by parts) to the integrand in~\eqref{eqnC5:defK\degAInCFive, Kxx} yields
\begin{align}\label{eqnC5:analysiDimd_1}
\nonumber \sum_{\degaInCFive=0}^{\degAInCFive} {C}^{\langle d \rangle}_{\degaInCFive} J^2_{\degaInCFive+\frac{d-2}{2}}(s)& = \sum_{\degaInCFive=0}^{\degAInCFive} \left[({C}^{\langle d \rangle}_{\degaInCFive}-{C}^{\langle d \rangle}_{\degaInCFive-1}) \sum_{{\degaInCFive}^{'}=\degaInCFive}^{\degAInCFive} J^2_{{\degaInCFive}^{'}+\frac{d-2}{2}}(s)\right] \\
	&={C}^{\langle d \rangle}_{\degAInCFive}\sum_{\degaInCFive=0}^{\degAInCFive}J^2_{\degaInCFive+\frac{d-2}{2}}(s) - \sum_{\degaInCFive=0}^{\degAInCFive} \left[({C}^{\langle d \rangle}_{\degaInCFive}-{C}^{\langle d \rangle}_{\degaInCFive-1}) \sum_{{\degaInCFive}^{'}=0}^{\degaInCFive-1} J^2_{{\degaInCFive}^{'}+\frac{d-2}{2}}(s)\right],
\end{align}
We next motivate the expected limit of~\eqref{eqnC5:analysiDimd_1}. Set $s=\degAInCFive r$ for fixed $r>0$ and let $\degAInCFive\to\infty$. By~\eqref{eqnC5:keyequation}, the first sum on the right-hand side converges to $\functionInCFiveFA(r)$. Similarly, the inner sum in the second term may be heuristically replaced by $\functionInCFiveFA(\frac{\degAInCFive}{\degaInCFive-1}r)$. After division by ${C}^{\langle d \rangle}_{\degAInCFive}$, this suggests the following limit; a complete proof is given in Appendix~\ref{secC5:appendix}.
\begin{align}\label{eqnC5:analysiDimd_2}
\lim_{\degAInCFive\to\infty}\frac{\sum_{\degaInCFive=0}^{\degAInCFive} {C}^{\langle d \rangle}_{\degaInCFive} J^2_{\degaInCFive+\frac{d-2}{2}}(\degAInCFive r)}{{C}^{\langle d \rangle}_{\degAInCFive}}& = \functionInCFiveFA(r)-\lim_{\degAInCFive\to\infty}\sum_{\degaInCFive =0}^{\degAInCFive-1} \frac{{C}^{\langle d \rangle}_{\degaInCFive+1}-{C}^{\langle d \rangle}_{\degaInCFive}}{{C}^{\langle d \rangle}_{\degAInCFive}}\functionInCFiveFA\left(\tfrac{\degAInCFive}{\degaInCFive}r\right).
\end{align}
To simplify this expression further, observe that the sum on the right-hand side of~\eqref{eqnC5:analysiDimd_2} can be interpreted as a discrete approximation of $\int_{1}^{\infty} \functionInCFiveFA(tr)p(t) \diffsymbol t$. To identify the weight function $p(t)$, we replace ${C}^{\langle d \rangle}_{\degaInCFive+1}-{C}^{\langle d \rangle}_{\degaInCFive}$ and ${C}^{\langle d \rangle}_{\degAInCFive}$ by their leading terms $\frac{2}{\substigamma(d-2)}{\degaInCFive}^{d-3}$ and $\frac{2}{\substigamma(d-1)}{\degAInCFive}^{d-2}$, respectively; this replacement does not change the limit.
% but simplify our computation. 
%So now we take a look at the following expression
%\begin{align*}
%	\lim_{\degAInCFive}{\degaInCFive}^{d-3}\functionInCFiveFA\left(\tfrac{\degAInCFive}{\degaInCFive}r\right)
%\end{align*} 
The desired weight function $p(t)$ should therefore satisfy
\begin{align}
\int_{1}^{T}p(t)\diffsymbol t \propto \lim_{\degAInCFive\to\infty}\sum_{\degaInCFive=\lceil \frac{\degAInCFive}{T}\rceil}^{\degAInCFive} \frac{{\degaInCFive}^{d-3}}{{\degAInCFive}^{d-2}}= \int_{\frac{1}{T}}^{1}t^{d-3}\diffsymbol t  \quad\text{and}\quad \int_{1}^{\infty} p(t)\diffsymbol t=1,
\end{align}
where $\propto$ denotes proportionality as functions of $T$. This gives
\begin{align}
p(t)= (d-2)t^{1-d}.
\end{align}
The preceding argument predicts the limit
\begin{align}\label{eqnC5:summation2integral}
\lim_{\degAInCFive\to\infty}\sum_{\degaInCFive =0}^{\degAInCFive-1} \frac{{C}^{\langle d \rangle}_{\degaInCFive+1}-{C}^{\langle d \rangle}_{\degaInCFive}}{{C}^{\langle d \rangle}_{\degAInCFive}}\functionInCFiveFA\left(\tfrac{\degAInCFive}{\degaInCFive}r\right) = \int_{1}^{\infty}\functionInCFiveFA(tr) \left[(d-2)t^{1-d}\right] \diffsymbol t.
\end{align}

The rigorous statement, proved in Appendix~\ref{secC5:appendix}, is the following higher-dimensional version of~\eqref{eqnC5:keyequation}.

\begin{prop} Let the dimension satisfy $d\geq 3$, and let $U(\cdot)$ be the function defined in~\eqref{eqn:defU}. Then, for any $r>0$,
\begin{align}\label{eqnC5:summation2integral2}
\lim_{\degAInCFive\to\infty}\frac{\sum_{\degaInCFive=0}^{\degAInCFive} \mathrm{dim}(H_{\degaInCFive}^{d}) J^2_{\degaInCFive+\frac{d-2}{2}}(\degAInCFive r)}{\mathrm{dim}(H_{\degAInCFive}^{d})} = \functionInCFiveFA(r)-\int_{1}^{\infty}\functionInCFiveFA(tr) \left[(d-2)t^{1-d}\right] \diffsymbol t.
\end{align}
\end{prop}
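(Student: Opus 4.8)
The plan is to make rigorous the heuristic computation of \eqref{eqnC5:analysiDimd_1}--\eqref{eqnC5:summation2integral}. Everything starts from the summation-by-parts identity \eqref{eqnC5:analysiDimd_1} with $s=\degAInCFive r$: dividing it by ${C}^{\langle d\rangle}_{\degAInCFive}=\mathrm{dim}(H_{\degAInCFive}^{d})$ writes the left-hand side of \eqref{eqnC5:summation2integral2} as $A_{\degAInCFive}-B_{\degAInCFive}$, where $A_{\degAInCFive}=\sum_{\degaInCFive=0}^{\degAInCFive}J^2_{\degaInCFive+\frac{d-2}{2}}(\degAInCFive r)$, $B_{\degAInCFive}=\sum_{\degaInCFive=0}^{\degAInCFive}w_{\degaInCFive}\bigl(\sum_{{\degaInCFive}'=0}^{\degaInCFive-1}J^2_{{\degaInCFive}'+\frac{d-2}{2}}(\degAInCFive r)\bigr)$, and $w_{\degaInCFive}=({C}^{\langle d\rangle}_{\degaInCFive}-{C}^{\langle d\rangle}_{\degaInCFive-1})/{C}^{\langle d\rangle}_{\degAInCFive}$. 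Proposition \ref{keyequation} with $v_0=\tfrac{d-2}{2}$ gives $A_{\degAInCFive}\to\functionInCFiveFA(r)$, so it suffices to prove $B_{\degAInCFive}\to\int_{1}^{\infty}\functionInCFiveFA(tr)(d-2)t^{1-d}\,\diffsymbol t$.

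For $B_{\degAInCFive}$ I first replace $w_{\degaInCFive}$ by its leading part $\widetilde w_{\degaInCFive}=(d-2)\degaInCFive^{d-3}/\degAInCFive^{d-2}$, using \eqref{eqnC5:asympoticofdCl} together with $\substigamma(d-1)=(d-2)\substigamma(d-2)$; since $|w_{\degaInCFive}-\widetilde w_{\degaInCFive}|=\bigo(\degaInCFive^{d-4}/\degAInCFive^{d-2})$ and every inner sum is bounded by $1$ (Remark \ref{remC5:remarkofKeyequation}), this changes $B_{\degAInCFive}$ only by $\bigo(\degAInCFive^{-1})$. Writing the inner sum as $\sum_{{\degaInCFive}'=0}^{\degaInCFive-1}J^2_{{\degaInCFive}'+\frac{d-2}{2}}\bigl((\degaInCFive-1)\cdot\tfrac{\degAInCFive r}{\degaInCFive-1}\bigr)$, Proposition \ref{keyequation} identifies it, for $\degaInCFive$ large, with $\functionInCFiveFA\bigl(\tfrac{\degAInCFive}{\degaInCFive-1}r\bigr)$; and with $\alpha=\degaInCFive/\degAInCFive$ the sum $\sum_{\degaInCFive}\widetilde w_{\degaInCFive}\functionInCFiveFA\bigl(\tfrac{\degAInCFive}{\degaInCFive-1}r\bigr)$ is a Riemann sum (mesh $1/\degAInCFive$) for $\int_{0}^{1}(d-2)\alpha^{d-3}\functionInCFiveFA(r/\alpha)\,\diffsymbol\alpha$, which equals $\int_{1}^{\infty}(d-2)t^{1-d}\functionInCFiveFA(tr)\,\diffsymbol t$ after $t=1/\alpha$.

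The genuine obstacle is that Proposition \ref{keyequation} converges uniformly only on compact subsets of $(0,1]$ and of $(1,\infty)$: it fails at argument $0$ and is non-uniform near argument $1$, where in addition $\functionInCFiveFA$ has a vertical tangent so that $\functionInCFiveFA(\tfrac{\degAInCFive}{\degaInCFive-1}r)$ need not be close to $\functionInCFiveFA(\tfrac{\degAInCFive}{\degaInCFive}r)$. In the Riemann sum this produces two blocks of vanishing $\widetilde w$-mass: the small indices $\degaInCFive\le\delta\degAInCFive$, where $\tfrac{\degAInCFive}{\degaInCFive-1}r$ is large, and, relevant only when $r\le1$, the indices with $|\degaInCFive/\degAInCFive-r|<\varepsilon$, where the argument is near $1$. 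I would dispose of both by truncation: $\sum_{\degaInCFive\le\delta\degAInCFive}\widetilde w_{\degaInCFive}=\delta^{d-2}+o(1)$ while each inner sum is $\le1$, so that block contributes $\bigo(\delta^{d-2})$, matching $\int_{0}^{\delta}(d-2)\alpha^{d-3}\functionInCFiveFA(r/\alpha)\,\diffsymbol\alpha=\bigo(\delta^{d-2})$; the $\varepsilon$-block near $\alpha=r$ has $\widetilde w$-mass $\bigo(\varepsilon)$, and since $\functionInCFiveFA$ is bounded and continuous (note $\functionInCFiveFA(1^{+})=\tfrac12=\functionInCFiveFA(1)$) the corresponding piece of the integral is also $\bigo(\varepsilon)$. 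On the complementary good set the arguments $\tfrac{\degAInCFive}{\degaInCFive-1}r$ stay in a fixed compact subset of $(0,1)\cup(1,\infty)$ on which $\functionInCFiveFA$ is Lipschitz and Proposition \ref{keyequation} is uniform, the discrepancy between $\functionInCFiveFA(\tfrac{\degAInCFive}{\degaInCFive-1}r)$ and $\functionInCFiveFA(\tfrac{\degAInCFive}{\degaInCFive}r)$ is uniformly $\bigo(\degAInCFive^{-1})$, and the Riemann sum converges to the integral over the good set. Letting $\degAInCFive\to\infty$ and then $\varepsilon,\delta\to0$ completes the argument; the only real work is the uniform book-keeping of these three regimes.
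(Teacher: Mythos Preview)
Your proposal is correct and follows essentially the same route as the paper: Abel summation \eqref{eqnC5:analysiDimd_1}, Proposition~\ref{keyequation} applied uniformly on the good index blocks, and truncation of the two bad blocks (small $\degaInCFive$ and $\degaInCFive$ near $r\degAInCFive$) whose total weight is $O(\delta^{d-2})+O(\varepsilon)$. The paper splits the work into two separate lemmas, keeping the exact weights $a_{\degaInCFive}=({C}^{\langle d\rangle}_{\degaInCFive+1}-{C}^{\langle d\rangle}_{\degaInCFive})/{C}^{\langle d\rangle}_{\degAInCFive}$ intact in the first and passing from $\sum a_{\degaInCFive}\functionInCFiveFA(\tfrac{\degAInCFive}{\degaInCFive}r)$ to the integral via a monotone step-function sandwich in the second, whereas you replace the weights by their leading term early and treat the Riemann sum directly using the Lipschitz regularity of $\functionInCFiveFA$ away from $1$; the substance is the same.
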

We now compute the integral $\int_{1}^{\infty}\functionInCFiveFA(tr)p(t)\diffsymbol t$. Since $\functionInCFiveFA(\cdot)$ is not differentiable at $1$, it is convenient to discuss the cases $r\geq 1$ and $0<r<1$ separately.
\paragraph{} When $r\geq 1$, we have $tr\geq 1$ for $t\geq 1$. In this region, $\functionInCFiveFA(tr)$ is differentiable as a function of $t$, with
\begin{align*}
\frac{\diffsymbol}{\diffsymbol t}\functionInCFiveFA(tr)=\frac{\diffsymbol}{\diffsymbol t}\left\{-\frac{1}{\pi}\mathrm{arcsec}(tr)\right\}=\frac{(-1)r}{\pi tr\sqrt{t^2r^2-1}}.
\end{align*}
Integration by parts gives
\begin{align}\label{eqn:calculatingU_1}
&\quad \int_{1}^{\infty} \functionInCFiveFA(tr)(d-2)t^{1-d}\diffsymbol t\\
\nonumber	&=\left.[-\functionInCFiveFA(tr)t^{2-d}]\right|_{t=1}^{\infty} -\int_{1}^{\infty} \frac{(-1)r}{\pi tr\sqrt{t^2r^2-1}} (-1) t^{2-d}\diffsymbol t \\
\nonumber	&=\functionInCFiveFA(r)- \frac{1}{\pi}\int_{1}^{\infty}\frac{t^{1-d}}{\sqrt{t^2r^2-1}}\diffsymbol t.
\end{align}
%In particular, for the case $d=3$, 

\paragraph{}When $r<1$, we split the domain of integration into the two intervals $[1,1/r]$ and $(1/r,\infty)$, obtaining
\begin{align}\label{eqn:calculatingU_2}
\nonumber	\int_{1}^{\infty} \functionInCFiveFA(tr)(d-2)t^{1-d}\diffsymbol t & =\int_{1}^{\frac{1}{r}}\functionInCFiveFA(tr)(d-2)t^{1-d}\diffsymbol t +\int_{\frac{1}{r}}^{\infty}\functionInCFiveFA(tr)(d-2)t^{1-d}\diffsymbol t  \\ 
%\nonumber	&=\frac{1}{2} \int_{1}^{\frac{1}{r}} (d-2)t^{1-d}\diffsymbol t + \int_{1}^{\infty} \functionInCFiveFA(z)(d-2) (\frac{z}{r})^{1-d} r^{-1} \diffsymbol z\\
\nonumber	&= \frac{1}{2}(1- r^{d-2})+ r^{d-2}\int_{1}^{\infty}\functionInCFiveFA(z)(d-2)z^{1-d} \diffsymbol z\\
\nonumber	&=\frac{1}{2} -\frac{r^{d-2}}{\pi} \int_{1}^{\infty}\frac{t^{1-d}}{\sqrt{t^2-1}}\diffsymbol t\\
&=\functionInCFiveFA(r) -\frac{r^{d-2}}{\pi} \int_{1}^{\infty}\frac{t^{1-d}}{\sqrt{t^2-1}}\diffsymbol t,
\end{align}
where the third equality uses~\eqref{eqn:calculatingU_1}, and the last equality uses $\functionInCFiveFA(r)=1/2$ for $r<1$.

\hfill
%\break
%\newline
%\smallskip
Motivated by the preceding analysis, we define ${\functionInCFiveFA}^{\langle d\rangle}$ and ${\functionInCFiveFB}^{\langle d\rangle}$ for integers $d\geq2$.
\begin{defi}\label{def:UandW} For integer $d\geq 2$, we define ${\functionInCFiveFA}^{\langle d \rangle}: [0,\infty)\to\R$ as
\begin{subnumcases}{
	{\functionInCFiveFA}^{\langle d \rangle}(r) := \functionInCFiveFA(r)-\int_{1}^{\infty} \functionInCFiveFA(tr)(d-2)t^{1-d}\diffsymbol t\\ 
=} 
\frac{r^{d-2}}{\pi} \int_{1}^{\infty}\frac{t^{1-d}}{\sqrt{t^2-1}}\diffsymbol t \quad ( r\leq 1), \label{eqnC5:defAuxFun1}\\ 
\frac{1}{\pi}\int_{1}^{\infty}\frac{t^{1-d}}{\sqrt{r^2t^2-1}}\diffsymbol t \quad (r>1). \label{eqnC5:defdauxFun2} 
\end{subnumcases}
Moreover, for $r>0$, we define ${\functionInCFiveFB}^{\langle d \rangle}$ by
\begin{align}\label{eqnC5:defdWforSHB}
{\functionInCFiveFB}^{\langle d \rangle}(r):=\frac{2}{\substigamma(d-1)\mathrm{vol}(\Sphered)} \frac{1}{r^{d}}\int_{0}^{r} {\functionInCFiveFA}^{\langle d \rangle}(t) t\diffsymbol t.
\end{align}
At $r=0$, we define it by continuous extension:
${\functionInCFiveFB}^{\langle d\rangle}(0)
:=\lim_{r\to 0}{\functionInCFiveFB}^{\langle d\rangle}(r)
=\frac{\mathrm{vol}(\BB^d)}{(2\pi)^d}.
$

\end{defi}	
%%\begin{subequations}
%	\label{Phi:def}
%	
%	\begin{align}[left ={Φ(A_n) = \empheqlbrace}]
%		& \max \left [ {{\degAInCFive}_{i}}-A_n, 0 \right]\\
%		& \max \left [ {{\degAInCFive}_{i}}-A_n, K \right]\label{eq:PayoffStopLosses}\\
%		& \max \left [ {{\degAInCFive}_{i}}-A_n, K_1 \right] -\max \left [ A_n-{{\degAInCFive}_{i}}, K_2 \right]
%	\end{align}
%\end{subequations}

\begin{rem}\label{rem:explicitformford23} When $d=2$, setting $d-2=0$ in the second term of~\eqref{eqnC5:defAuxFun1} gives
\begin{align*}
{\functionInCFiveFA}^{\langle 2 \rangle}(r)=\functionInCFiveFA(r),
\end{align*}
and ${\functionInCFiveFB}^{\langle 2 \rangle}(r)$ has the form
\begin{align}\label{eqnC5:def2W}
{\functionInCFiveFB}^{\langle 2 \rangle}(r)=
\begin{cases}
\frac{1}{4\pi} &\quad (r\leq 1),\\
\frac{1}{4\pi}+\frac{\sqrt{r^2-1}}{2\pi^2 r^2}-\frac{\arccos (1/r)}{2\pi^2} &\quad (r>1).
\end{cases} 
\end{align}
For $d=3$,
\begin{align}
\frac{1}{\pi}\int_{t=1}^{\infty}\frac{t^{1-3}}{\sqrt{t^2r^2-1}}\diffsymbol t= \left.\left[\frac{1}{\pi} \frac{\sqrt{r^2t^2-1}}{t}\right]\right|_{t=1}^{\infty}
=\frac{1}{\pi}(r-\sqrt{r^2-1}),
\end{align}
which yields
\begin{align}\label{eqnC5:def3\functionInCFiveFA}
{\functionInCFiveFA}^{\langle 3 \rangle}(r) = 
\begin{cases}
\frac{1}{\pi} r &\quad (r\leq 1),\\
\frac{1}{\pi}(r-\sqrt{r^2-1}) &\quad (r>1).
\end{cases}
\end{align}
and
\begin{align}\label{eqnC5:def3W}
{\functionInCFiveFB}^{\langle 3 \rangle}(r)=
\begin{cases}
\frac{1}{6\pi^2} &\quad (r\leq 1),\\
\frac{1}{6\pi^2}(1-\frac{(r^2-1)^{3/2}}{r^3}) &\quad (r>1).
\end{cases} 
\end{align}
\end{rem}
%\paragraph{Properties of ${\functionInCFiveFA}^{\langle d \rangle}(r)$}
%\begin{enumerate}
%	\item ${\functionInCFiveFA}^{\langle d \rangle}(r)$ is continuous at $1$
%	\item $r^{2-d}{\functionInCFiveFA}^{\langle d \rangle}(r)$ is constant form $0$ to $1$ and monotonically decreasing to $0$ with $r\to\infty$. 
%\end{enumerate}
\begin{thm}\label{thm:convergenceofshbKtoW}
For fixed integer $d\geq 3$ and $x\in\R^d$ such that $\|x\|>0$, let $\{\degAInCFive_i\}_{i\in\N}\subset\N_0$ and $\{K_i\}_{i\in\N}\subset\R_+$ satisfy $\degAInCFive_i\to\infty$, $K_i\to\infty$, and $\degAInCFive_i/K_i\to\kappa\in(0,\infty)$. Then
\begin{align}\label{eqnC5:convergenceofshbKtoW}
\lim_{i\to\infty}\frac{\mathcal K_{\degAInCFive_i,K_i}(x,x)}{K_i^d}
= {\functionInCFiveFB}^{\langle d \rangle}_{\langle \kappa \rangle}(\|x\|).
\end{align}
\end{thm}
\begin{proof}
%	\begin{align} \label{eqnC5:analysisdiagonalKDimd}
%		\lim_{L,K\to\infty; \frac{\degAInCFive}{K}\to \kappa}\frac{\KLK(x,x)}{K^d}&=\lim_{L,K\to\infty; \frac{\degAInCFive}{K}\to \kappa}\frac{\|x\|^{2-d}}{\mathrm{vol}(\Sphered)K^d}\int_{0}^{K}  \sum_{\degaInCFive =0}^{\degAInCFive}  {C}^{\langle d \rangle}_{\degaInCFive} J^2_{\degaInCFive+\frac{d-2}{2}}(\frac{k}{\degAInCFive}L\|x\|) k \diffsymbol k \\ \nonumber
%		&=\frac{\|x\|^{2-d}}{\mathrm{vol}(\Sphered)K^2}\int_{0}^{K} \frac{{C}^{\langle d \rangle}_{\degAInCFive}}{K^{d-2}} {\functionInCFiveFA}^{\langle d \rangle}(\frac{k\|x\|}{\kappa K}) k \diffsymbol k \\ \nonumber
%		&= \frac{2}{\substigamma(d-1)\mathrm{vol}(\Sphered)} \frac{\kappa^{d-2}}{K^2\|x\|^{d-2}}\int_{0}^{K} {\functionInCFiveFA}^{\langle d \rangle}_{\langle \kappa \rangle}(\frac{\|x\|}{K}k)k \diffsymbol k\\ \nonumber
%		&= \frac{2}{\substigamma(d-1)\mathrm{vol}(\Sphered)} \frac{\kappa^{d-2}}{\|x\|^{d}}\int_{0}^{\|x\|} {\functionInCFiveFA}^{\langle d \rangle}_{\langle \kappa \rangle}(t) t\diffsymbol t\\ \nonumber
%		&= \frac{2}{\substigamma(d-1)\mathrm{vol}(\Sphered)} \frac{1}{\|\kappa^{-1}x\|^{d}}\int_{0}^{\|\kappa^{-1}x\|} {\functionInCFiveFA}^{\langle d \rangle}(t) t\diffsymbol t \\ \nonumber
%		&= {\functionInCFiveFB}^{\langle d \rangle}(\|\kappa^{-1}x\|) = {\functionInCFiveFB}^{\langle d \rangle}_{\langle \kappa \rangle}(\|x\|).
%	\end{align}
First assume the exact relation $\degAInCFive=\kappa K$ with $\kappa K\in\N_0$. Then
\begin{align} \label{eqnC5:analysisdiagonalKDimd}
\frac{\KLK(x,x)}{K^d} &=\frac{\|x\|^{2-d}}{\mathrm{vol}(\Sphered)K^d}\int_{0}^{K}  \sum_{\degaInCFive =0}^{\degAInCFive}  {C}^{\langle d \rangle}_{\degaInCFive} J^2_{\degaInCFive+\frac{d-2}{2}}(k\|x\|) k \diffsymbol k \\ \nonumber
&=\frac{\|x\|^{2-d} {C}^{\langle d \rangle}_{\kappa K}}{\mathrm{vol}(\Sphered)K^d}\int_{0}^{K}  \sum_{\degaInCFive =0}^{\kappa K}  \frac{{C}^{\langle d \rangle}_{\degaInCFive}}{{C}^{\langle d \rangle}_{\kappa K}} J^2_{\degaInCFive+\frac{d-2}{2}}(k\|x\|) k \diffsymbol k \\ \nonumber
&\overset{\scriptstyle t=(\frac{\|x\|}{K})k}{=} \frac{{C}^{\langle d \rangle}_{\kappa K}\|x\|^{2-d}}{\mathrm{vol}(\Sphered)K^d}\int_{0}^{\|x\|}  \left[\sum_{\degaInCFive =0}^{\kappa K}\frac{{C}^{\langle d \rangle}_{\degaInCFive}}{{C}^{\langle d \rangle}_{\kappa K}} J^2_{\degaInCFive+\frac{d-2}{2}}(\kappa K \tfrac{t}{\kappa})\right] \left(\frac{K}{\|x\|} t\right) \left(\frac{K}{\|x\|}\right)   \diffsymbol t \\ \nonumber
&=\frac{2  \kappa^{d-2}+\bigo(K^{-1}) }{\substigamma(d-1)\mathrm{vol}(\Sphered)}\frac{1}{\|x\|^d}\int_{0}^{\|x\|}  \left[\sum_{\degaInCFive =0}^{\kappa K}\frac{{C}^{\langle d \rangle}_{\degaInCFive}}{{C}^{\langle d \rangle}_{\kappa K}} J^2_{\degaInCFive+\frac{d-2}{2}}(\kappa K \tfrac{t}{\kappa})\right] t  \diffsymbol t.
\end{align}
Letting $K\to\infty$ and applying~\eqref{eqnC5:summation2integral2} to the bracketed expression on the right-hand side of~\eqref{eqnC5:analysisdiagonalKDimd}, we may interchange the limit and the integral by the dominated convergence theorem, since the bracketed expression is bounded by $1$. Thus
\begin{align}
\lim_{\degAInCFive,K\to\infty; \frac{\degAInCFive}{K}= \kappa}\frac{\KLK(x,x)}{K^d}	&= \frac{2 \kappa^{d-2}}{\substigamma(d-1)\mathrm{vol}(\Sphered)} \frac{1}{\|x\|^{d}}\int_{0}^{\|x\|} {\functionInCFiveFA}^{\langle d \rangle}(\tfrac{t}{\kappa}) t  \diffsymbol t\\ \nonumber
&= \frac{2}{\substigamma(d-1)\mathrm{vol}(\Sphered)} \frac{1}{\|\kappa^{-1}x\|^{d}}\int_{0}^{\|\kappa^{-1}x\|} {\functionInCFiveFA}^{\langle d \rangle}(t) t\diffsymbol t \\ \nonumber
&= {\functionInCFiveFB}^{\langle d \rangle}(\|\kappa^{-1}x\|) = {\functionInCFiveFB}^{\langle d \rangle}_{\langle \kappa \rangle}(\|x\|).
\end{align}

The general case $\degAInCFive_i/K_i\to\kappa$ follows from
the same monotonicity and squeezing argument as in the proof of
Theorem~\ref{thm:characterizeKLKdim2}. Indeed, applying the exact-ratio
result with ratios $\kappa\pm\eta$ and then letting $\eta\downarrow0$
yields the assertion.
\end{proof}

\begin{rem}\label{rem:extension_analysisdiagonalKDimd}
The endpoint regimes can be treated by the same monotonicity argument. If $\degAInCFive_i/K_i\to0$, the limiting profile is interpreted as ${\functionInCFiveFB}^{\langle d\rangle}_{\langle0\rangle}(r)\equiv0$ for $r>0$. If $\degAInCFive_i/K_i\to\infty$, it is interpreted as ${\functionInCFiveFB}^{\langle d\rangle}_{\langle\infty\rangle}(r)\equiv{\functionInCFiveFB}^{\langle d\rangle}(0)$.
\end{rem}

\subsection{\texorpdfstring{Qualitative structure of the limiting profile: Fourier and Hankel regimes}{Qualitative structure of the limiting profile: Fourier and Hankel regimes}}
\label{subsec:qualitative-density}

We now collect the qualitative consequences of the limiting profile obtained above. 
The explicit formulas for ${\functionInCFiveFA}^{\langle d \rangle}$ and ${\functionInCFiveFB}^{\langle d \rangle}$ in the cases $d=2,3$ were given in Remark~\ref{rem:explicitformford23}, and the numerical demonstrations of Theorems~\ref{thm:convergenceofshbKtoW} and~\ref{thm:characterizeKLKdim2} are shown in Figures~\ref{fig:DemoForD2} and~\ref{fig:DemoForD3}. 
For arbitrary dimension, however, the formulas~\eqref{eqnC5:defAuxFun1},~\eqref{eqnC5:defdauxFun2}, and~\eqref{eqnC5:defdWforSHB} already reveal the main structure of the limiting density. 
The next two corollaries identify its two structural features that drive the Shannon-number asymptotics: the Paley--Wiener plateau and the Hankel-type far-field tail. %and Remark~\ref{rem:fourier-hankel-regimes} interprets these two facts as a transition between a Paley--Wiener regime and a Hankel-type radial regime.

\begin{cor}\label{cor:profile-plateau-monotonicity}
Let $d\geq2$. Then $r^{2-d}{\functionInCFiveFA}^{\langle d\rangle}(r)$ and ${\functionInCFiveFB}^{\langle d\rangle}(r)$ are constant on $[0,1]$ and monotonically decreasing on $[1,\infty)$.
%constant for $r$ range from $0$ to $1$ and strictly monotonic decreasing from $1$ to $\infty$ . 
\end{cor}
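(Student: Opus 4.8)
The plan is to read everything off the closed forms in Definition~\ref{def:UandW}, treating the two functions in turn. For $r^{2-d}{\functionInCFiveFA}^{\langle d \rangle}(r)$ on $[0,1]$, formula~\eqref{eqnC5:defAuxFun1} gives immediately
\begin{align*}
	r^{2-d}{\functionInCFiveFA}^{\langle d \rangle}(r)=\frac{1}{\pi}\int_{1}^{\infty}\frac{t^{1-d}}{\sqrt{t^2-1}}\diffsymbol t=:A_d ,
\end{align*}
independent of $r$, hence constant. On $(1,\infty)$ I would start from~\eqref{eqnC5:defdauxFun2} and substitute $u=rt$, which turns $r^{2-d}{\functionInCFiveFA}^{\langle d \rangle}(r)$ into $\frac{1}{\pi}\int_{r}^{\infty}\frac{u^{1-d}}{\sqrt{u^2-1}}\diffsymbol u$; the integrand is strictly positive and the lower endpoint increases with $r$, so this is strictly decreasing. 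Sending $r\to1^{+}$ recovers $A_d$, so the two pieces glue continuously at $r=1$ (dominated convergence applies, since for $r\geq1$ the integrand $t^{1-d}/\sqrt{r^2t^2-1}$ is bounded by the integrable $t^{1-d}/\sqrt{t^2-1}$), and the claim for $r^{2-d}{\functionInCFiveFA}^{\langle d \rangle}$ follows, with ``decreasing on $[1,\infty)$'' read with the left endpoint included.

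For ${\functionInCFiveFB}^{\langle d \rangle}$ I would set $g(r):=r^{2-d}{\functionInCFiveFA}^{\langle d \rangle}(r)$ — the function just analysed, hence non-increasing on $[0,\infty)$, constant $=A_d$ on $[0,1]$, and strictly decreasing on $[1,\infty)$ — and rewrite~\eqref{eqnC5:defdWforSHB} using ${\functionInCFiveFA}^{\langle d \rangle}(t)\,t=g(t)\,t^{d-1}$: with $C_d:=\tfrac{2}{\substigamma(d-1)\mathrm{vol}(\Sphered)}>0$ and $\Phi(r):=r^{-d}\int_{0}^{r}g(t)t^{d-1}\diffsymbol t$ one has ${\functionInCFiveFB}^{\langle d \rangle}=C_d\,\Phi$, so it suffices to treat $\Phi$. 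On $[0,1]$, $g\equiv A_d$ forces $\Phi\equiv A_d/d$, a constant. On $(1,\infty)$, since $g$ is continuous, $\Phi$ is differentiable with
\begin{align*}
	\Phi'(r)=r^{-d-1}\Big(r^{d}g(r)-d\int_{0}^{r}g(t)t^{d-1}\diffsymbol t\Big),
\end{align*}
and because $g(t)\geq g(r)$ for $t\leq r$ we get $d\int_{0}^{r}g(t)t^{d-1}\diffsymbol t\geq g(r)\,d\int_{0}^{r}t^{d-1}\diffsymbol t=r^{d}g(r)$, i.e.\ $\Phi'(r)\leq0$; for $r>1$ the inequality $g(t)\geq g(r)$ is strict on the set $t\in(1,r)$ of positive measure (as $g$ is strictly decreasing past $1$), so $\Phi'(r)<0$ there. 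Continuity of $\Phi$ at $r=1$ is clear, and combining this with the constancy on $[0,1]$ shows ${\functionInCFiveFB}^{\langle d \rangle}$ is constant on $[0,1]$ and (strictly) decreasing on $[1,\infty)$.

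There is no genuine obstacle here: all the integrals converge for $d\geq2$ (the $t=1$ singularity is of order $(t-1)^{-1/2}$ and the tail decays like $t^{-d}$), and the only steps needing a line of care are the substitution $u=rt$ together with the ensuing continuous gluing at $r=1$, and the observation — which is exactly the computation of $\Phi'$ above — that the running $t^{d-1}$-weighted average of a non-increasing function is itself non-increasing, strictly so once the averaged function is not locally constant near the right endpoint.
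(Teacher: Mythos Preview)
Your proof is correct and follows essentially the same approach as the paper: the substitution $u=rt$ to handle $r^{2-d}{\functionInCFiveFA}^{\langle d \rangle}(r)$ on $(1,\infty)$, and the rewriting ${\functionInCFiveFA}^{\langle d \rangle}(t)\,t=g(t)\,t^{d-1}$ followed by differentiation of the running average to handle ${\functionInCFiveFB}^{\langle d \rangle}$, are exactly what the paper does. You add a bit more care about continuity at $r=1$ and integrability, which the paper leaves implicit.
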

\begin{proof} We first prove the assertion for $r^{2-d}{\functionInCFiveFA}^{\langle d \rangle}(r)$. From the formula for ${\functionInCFiveFA}^{\langle d \rangle}(r)$, the function $r^{2-d}{\functionInCFiveFA}^{\langle d \rangle}(r)$ is constant on $[0,1]$, so it remains to discuss its behavior on $(1,\infty)$. For $r>1$, we have
\begin{align}
r^{2-d}{\functionInCFiveFA}^{\langle d \rangle}(r)=\frac{1}{\pi}\int_{1}^{\infty}\frac{t^{1-d}r^{1-d}}{\sqrt{r^2t^2-1}}r\diffsymbol t
\overset{z=rt}{=}\frac{1}{\pi}\int_{r}^{\infty} \frac{z^{1-d}}{\sqrt{z^2-1}}\diffsymbol z,
\end{align}
Since the integrand is strictly positive, $r^{2-d}{\functionInCFiveFA}^{\langle d\rangle}(r)$ is strictly decreasing.

We now consider ${\functionInCFiveFB}^{\langle d \rangle}(r)$. For $0\leq r\leq 1$, the definition gives
\begin{align*}
{\functionInCFiveFB}^{\langle d \rangle}(r) \propto \frac{\int_{0}^{r}t^{d-1}\diffsymbol t}{r^d}=\frac{1}{d}
\end{align*}
which is constant in this region. For $r>1$, writing ${\functionInCFiveFA}^{\langle d\rangle}(t)t$ as $[t^{2-d}{\functionInCFiveFA}^{\langle d\rangle}(t)]t^{d-1}$ gives
\begin{align*}
{\functionInCFiveFB}^{\langle d \rangle}(r)\propto \frac{1}{r^d}\int_{0}^{r}[t^{2-d}{\functionInCFiveFA}^{\langle d \rangle}(t)]t^{d-1}\diffsymbol t,
\end{align*} 
which leads to
\begin{align}
\nonumber		\frac{\diffsymbol}{\diffsymbol r} \left\{\frac{1}{r^d}\int_{0}^{r}[t^{2-d}{\functionInCFiveFA}^{\langle d \rangle}(t)]t^{d-1}\diffsymbol t\right\}&=\frac{1}{r}\left\{ [r^{2-d}{\functionInCFiveFA}^{\langle d \rangle}(r)]-\frac{d}{r^d}\int_{0}^{r} t^{2-d}{\functionInCFiveFA}^{\langle d \rangle}(t) t^{d-1}\diffsymbol t\right\} \\
\nonumber		&= \frac{d}{r^{d+1}}\int_{0}^{r} \left[r^{2-d}{\functionInCFiveFA}^{\langle d \rangle}(r)-t^{2-d}{\functionInCFiveFA}^{\langle d \rangle}(t)\right] t^{d-1}\diffsymbol t\\
&<0,
\end{align}
where the bracketed expression is strictly negative by the monotonicity of $r^{2-d}{\functionInCFiveFA}^{\langle d\rangle}(r)$.
\end{proof}

The following corollary characterizes the far-field behavior of ${\functionInCFiveFB}^{\langle d\rangle}$.
\begin{cor}\label{cor:tailWHankel}
Let $d\geq2$. Then
\begin{align}
\lim_{r\to\infty} {\functionInCFiveFA}^{\langle d \rangle}(r) r=\frac{1}{(d-1)\pi}.
\end{align}
Consequently,
\begin{align}
\lim_{r\to\infty}\frac{	{\functionInCFiveFB}^{\langle d \rangle}(r) }{r^{1-d}} =\frac{2}{\substigamma(d) \pi\mathrm{vol}(\Sphered)}.
\end{align}
\end{cor}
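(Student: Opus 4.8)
The plan is to reduce both limits to a single elementary estimate of a tail integral. For the first assertion, I would start from the alternative representation of ${\functionInCFiveFA}^{\langle d \rangle}$ already obtained in the proof of the preceding corollary, namely that for $r>1$
\begin{align*}
	r^{2-d}{\functionInCFiveFA}^{\langle d \rangle}(r)=\frac{1}{\pi}\int_{r}^{\infty}\frac{z^{1-d}}{\sqrt{z^2-1}}\diffsymbol z,
\end{align*}
which is obtained from \eqref{eqnC5:defdauxFun2} by the substitution $z=rt$. Multiplying by $r^{d-1}$ gives ${\functionInCFiveFA}^{\langle d \rangle}(r)\,r=\frac{r^{d-1}}{\pi}\int_{r}^{\infty}\frac{z^{1-d}}{\sqrt{z^2-1}}\diffsymbol z$. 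Now I would estimate the tail integral by writing $\frac{1}{\sqrt{z^2-1}}=\frac{1}{z}\bigl(1+\bigo(z^{-2})\bigr)$ for large $z$, so that $\int_{r}^{\infty}\frac{z^{1-d}}{\sqrt{z^2-1}}\diffsymbol z=\int_{r}^{\infty}z^{-d}\diffsymbol z+\bigo\!\bigl(\int_r^\infty z^{-d-2}\diffsymbol z\bigr)=\frac{r^{1-d}}{d-1}+\bigo(r^{-d-1})$. Multiplying back by $r^{d-1}/\pi$ yields ${\functionInCFiveFA}^{\langle d \rangle}(r)\,r=\frac{1}{(d-1)\pi}+\bigo(r^{-2})$, which gives the first limit.

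For the second assertion, I would unfold the definition \eqref{eqnC5:defdWforSHB}:
\begin{align*}
	\frac{{\functionInCFiveFB}^{\langle d \rangle}(r)}{r^{1-d}}=\frac{2}{\substigamma(d-1)\mathrm{vol}(\Sphered)}\cdot\frac{1}{r}\int_{0}^{r}{\functionInCFiveFA}^{\langle d \rangle}(t)\,t\diffsymbol t .
\end{align*}
The integrand ${\functionInCFiveFA}^{\langle d \rangle}(t)\,t$ is bounded near $0$ (indeed $t^{2-d}{\functionInCFiveFA}^{\langle d \rangle}(t)$ is constant on $[0,1]$) and, by the first part, converges to $\frac{1}{(d-1)\pi}$ as $t\to\infty$; hence its Cesàro average $\frac1r\int_0^r{\functionInCFiveFA}^{\langle d \rangle}(t)\,t\diffsymbol t$ converges to the same value (a routine $\varepsilon$-argument, or L'Hôpital). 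Plugging this in and using the functional equation $(d-1)\substigamma(d-1)=\substigamma(d)$ gives
\begin{align*}
	\lim_{r\to\infty}\frac{{\functionInCFiveFB}^{\langle d \rangle}(r)}{r^{1-d}}=\frac{2}{\substigamma(d-1)\mathrm{vol}(\Sphered)}\cdot\frac{1}{(d-1)\pi}=\frac{2}{\substigamma(d)\pi\mathrm{vol}(\Sphered)},
\end{align*}
as claimed.

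There is no serious obstacle here; the whole argument is elementary calculus. The only point requiring a little care is making the asymptotic expansion of $\int_{r}^{\infty}z^{1-d}(z^2-1)^{-1/2}\diffsymbol z$ rigorous — i.e. controlling the error term uniformly in the tail rather than merely heuristically — and verifying the boundedness of ${\functionInCFiveFA}^{\langle d \rangle}(t)\,t$ on all of $[0,\infty)$ so that the Cesàro limit step is legitimate; both follow readily from the explicit formulas \eqref{eqnC5:defAuxFun1}–\eqref{eqnC5:defdauxFun2}.
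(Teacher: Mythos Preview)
Your proposal is correct and follows essentially the same route as the paper: both arguments reduce the first limit to the elementary tail asymptotic $\int z^{1-d}(z^2-1)^{-1/2}\diffsymbol z\sim\frac{r^{1-d}}{d-1}$ (you via the substituted form from the preceding corollary, the paper directly from \eqref{eqnC5:defdauxFun2}), and both obtain the second limit by averaging ${\functionInCFiveFA}^{\langle d\rangle}(t)t$ over $[0,r]$. A small bonus of your formulation is that it treats $d=2$ uniformly, whereas the paper handles that case by a separate direct check.
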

\begin{proof} The case $d=2$ follows directly from the explicit formula. For $d\geq3$,
\begin{align}
{\functionInCFiveFA}^{\langle d \rangle}(r) r&=\frac{r}{\pi}\int_{1}^{\infty}\frac{t^{1-d}}{\sqrt{r^2t^2-1}}\diffsymbol t= \frac{1}{\pi}\int_{1}^{\infty}\frac{t^{1-d}}{\sqrt{t^2-1/r^2}}\diffsymbol t\\
\nonumber		&=\frac{1}{\pi}\int_{1}^{\infty} t^{-d}+ \frac{r^{-2}t^{1-d}}{t\sqrt{t^2-r^{-2}}(t+\sqrt{t^2-r^{-2}})} \diffsymbol t\\ \nonumber
\nonumber		&=\frac{1}{(d-1)\pi}+\frac{r^{-2}}{2(d+1)\pi}+\bigo(r^{-4}).
\end{align}
Substituting this expansion into~\eqref{eqnC5:defdWforSHB} gives
\begin{align}
%\lim_{r\to\infty}
\frac{	{\functionInCFiveFB}^{\langle d \rangle}(r) }{r^{1-d}}& = \frac{2}{\substigamma(d-1)\mathrm{vol}(\Sphered)} \frac{\int_{0}^{r}{\functionInCFiveFA}^{\langle d \rangle}(t)t\diffsymbol t}{r} \\
\nonumber	&=\frac{2}{\substigamma(d) \pi\mathrm{vol}(\Sphered)} + \frac{1}{\substigamma(d-1)(d+1)\pi\mathrm{vol}(\Sphered)}r^{-2} +\bigo(r^{-4}) \quad (r\to\infty).
\end{align}
\end{proof}

\begin{rem}[Fourier and Hankel regimes]\label{rem:fourier-hankel-regimes}
The preceding corollary gives a useful interpretation of the profile ${\functionInCFiveFB}^{\langle d\rangle}$ and of the parameter $\kappa$. The density appearing in Theorem~\ref{thm:convergenceofshbKtoW} is
\[
{\functionInCFiveFB}^{\langle d\rangle}_{\langle\kappa\rangle}(\|x\|)
={\functionInCFiveFB}^{\langle d\rangle}(\|x\|/\kappa).
\]
Since ${\functionInCFiveFB}^{\langle d\rangle}$ is constant on $[0,1]$, the SFB truncation has the same constant local density as in the classical Paley--Wiener setting on the region $\|x\|\leq \kappa$. In this regime the angular bandwidth is sufficiently large for the normalized diagonal kernel to recover the constant density of the Paley--Wiener projection. On the other hand, when $\|x\|\gg\kappa$, Corollary~\ref{cor:tailWHankel} gives
\[
{\functionInCFiveFB}^{\langle d\rangle}_{\langle\kappa\rangle}(\|x\|)
\sim
\frac{2}{\substigamma(d)\pi\mathrm{vol}(\Sphered)}\,
\kappa^{d-1}\|x\|^{1-d}.
\]
After multiplication by the spherical volume element $\|x\|^{d-1}\diffsymbol \|x\|\diffsymbol\omega$, this corresponds to a radial density of order $\kappa^{d-1}\diffsymbol\|x\|$. Thus, at the level of the leading local density, the far-field regime has a radial-length scaling modulated by the angular-bandwidth factor $\kappa^{d-1}$. This comparison is made at the level of the density law and is motivated by the Hankel concentration setting of Abreu and Bandeira~\cite{Abreu2012}: for Hankel band-limited spaces, the trace of the concentration operator on a radial interval has leading term proportional to the interval length times the Hankel spectral measure. In our case, the far-field formula shows the same type of radial-length scaling, with $\kappa^{d-1}$ playing the role of an angular-bandwidth prefactor. 
The function ${\functionInCFiveFB}^{\langle d\rangle}$ therefore provides a quantitative description of the transition from the Paley--Wiener local-density regime to a Hankel-type radial-density regime.
\end{rem}

\begin{figure}[!htbp]
\centering
\begin{subfigure}[t]{0.48\textwidth}
\centering
\includegraphics[width=\linewidth]{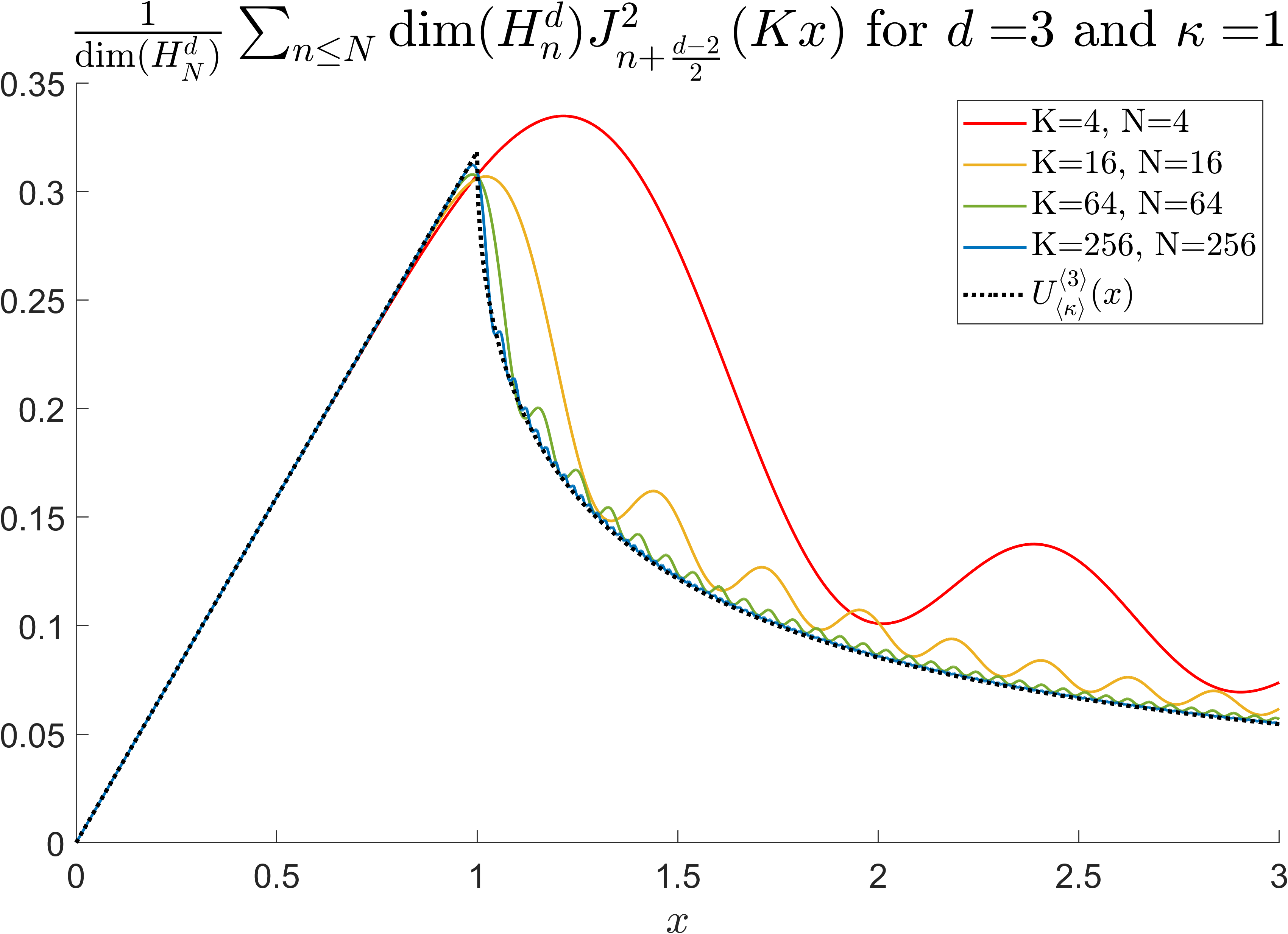}
\caption{Bessel sum, $\kappa=1$.}
\end{subfigure}
\hfill
\begin{subfigure}[t]{0.48\textwidth}
\centering
\includegraphics[width=\linewidth]{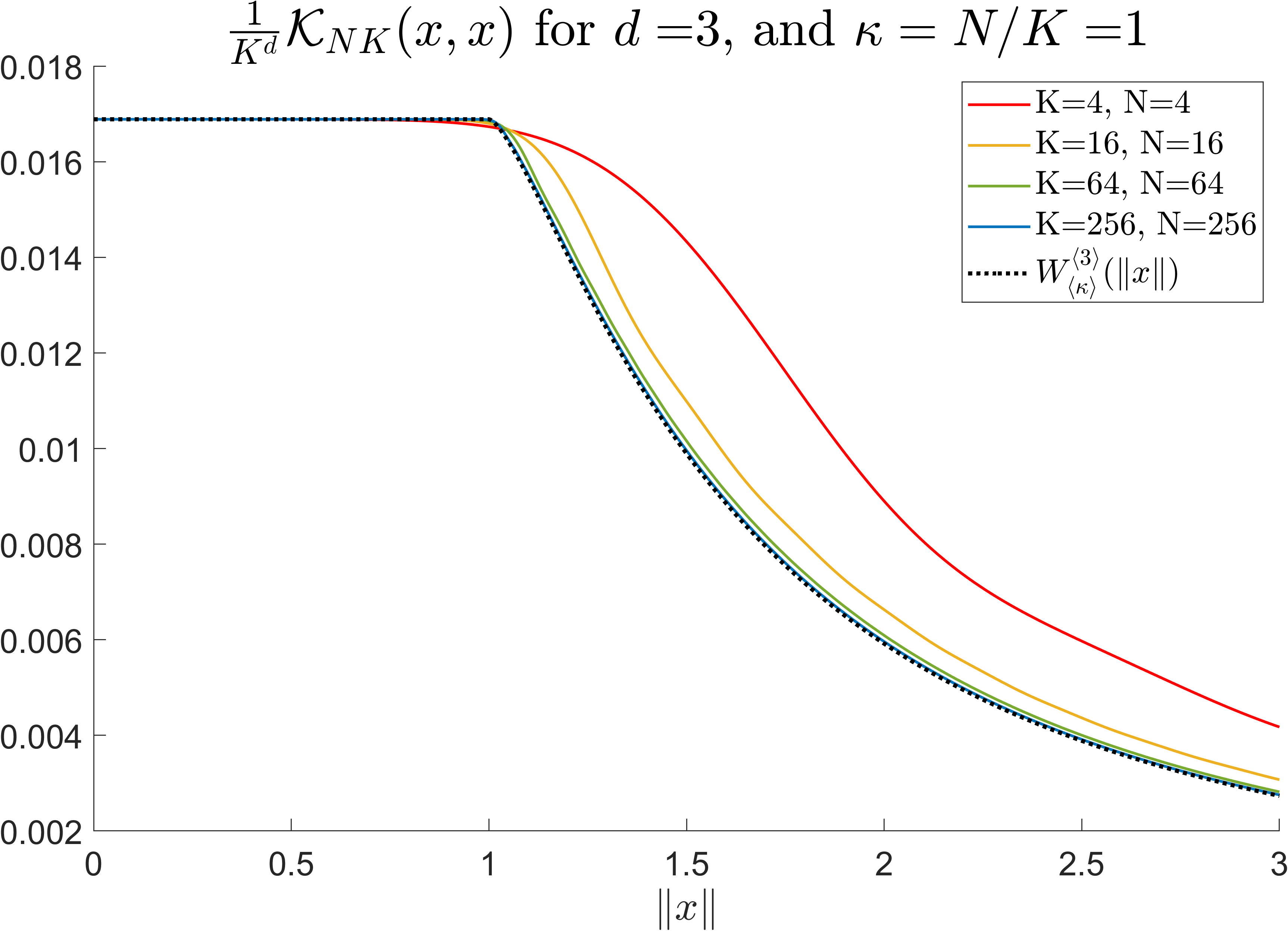}
\caption{Normalized diagonal kernel, $\kappa=1$.}
\end{subfigure}

\medskip
\begin{subfigure}[t]{0.48\textwidth}
\centering
\includegraphics[width=\linewidth]{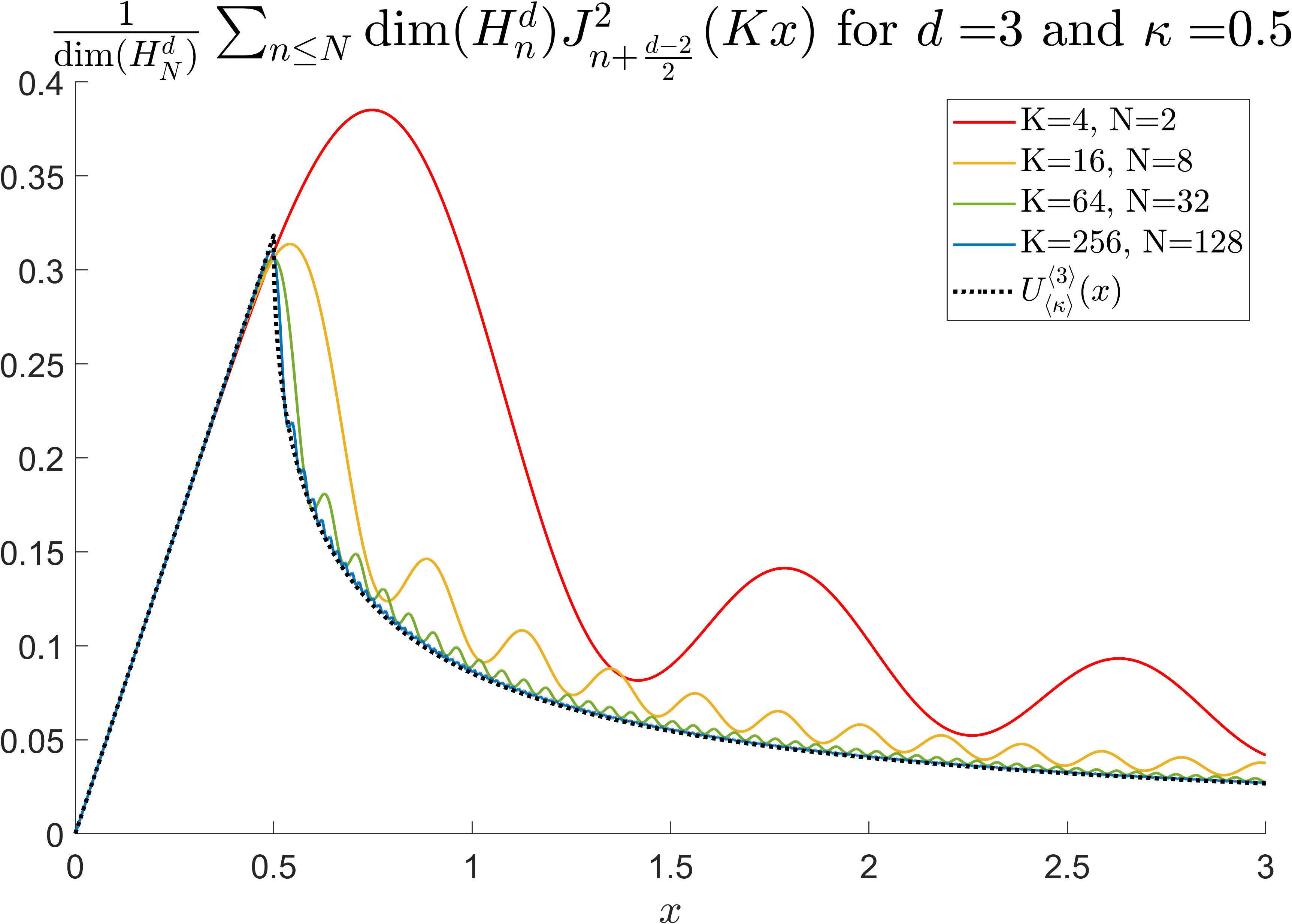}
\caption{Bessel sum, $\kappa=0.5$.}
\end{subfigure}
\hfill
\begin{subfigure}[t]{0.48\textwidth}
\centering
\includegraphics[width=\linewidth]{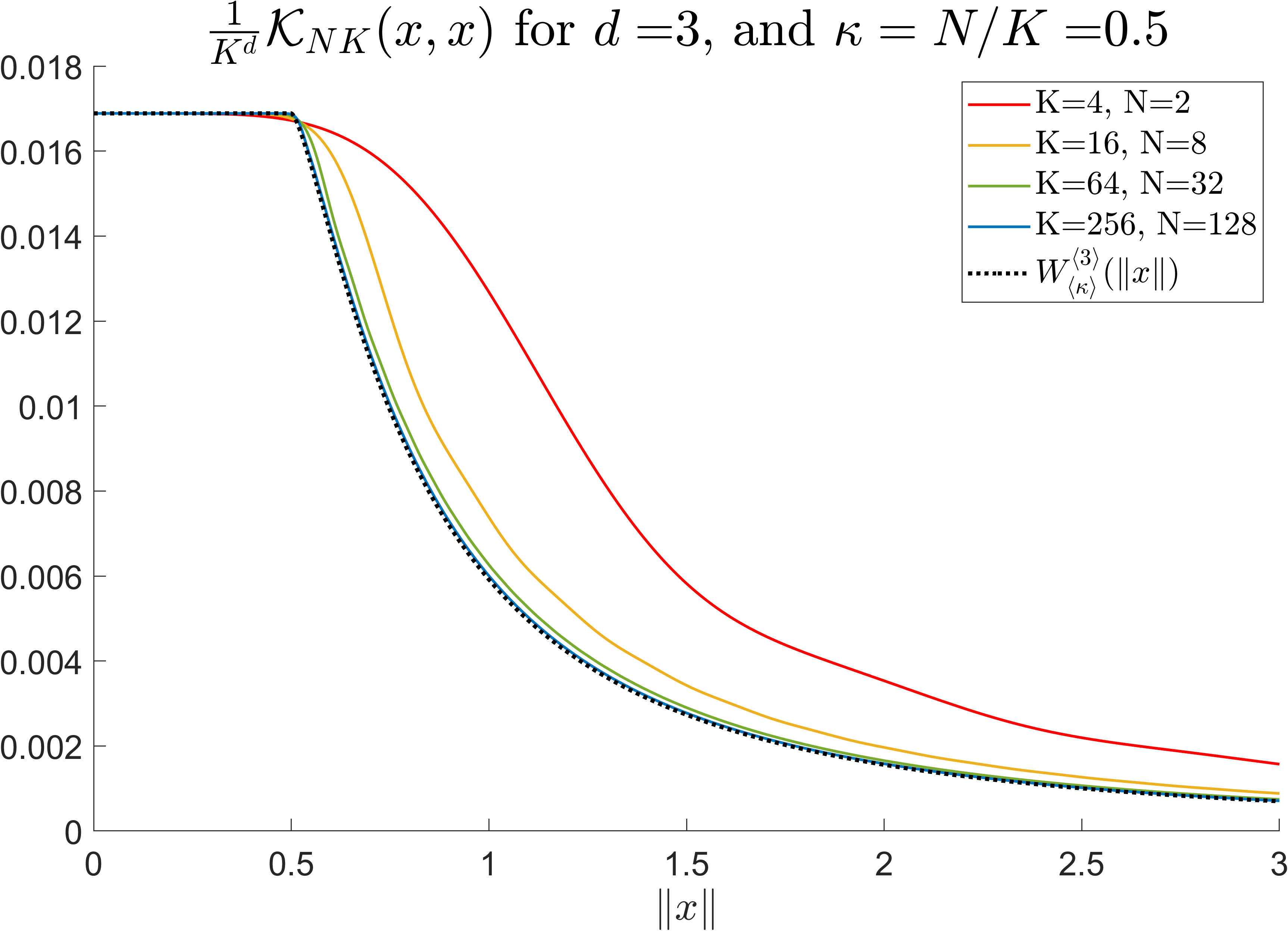}
\caption{Normalized diagonal kernel, $\kappa=0.5$.}
\end{subfigure}
\caption{Numerical convergence in dimension $d=3$, analogous to Figure~\ref{fig:DemoForD2}. The left column illustrates the finite Bessel sums in~\eqref{eqnC5:summation2integral2}, and the right column illustrates the normalized diagonal kernel in~\eqref{eqnC5:convergenceofshbKtoW}. The top and bottom rows correspond to $\kappa=1$ and $\kappa=0.5$, respectively; the dotted curves are the limiting profiles.}
\label{fig:DemoForD3}
\end{figure}
\FloatBarrier

\section{Spatiospectral concentration for the SFB truncation spaces}\label{secC5:SlepianProblem}

The SFB Slepian functions studied computationally in the three-dimensional setting in \cite{Khalid2016a} arise from eigenvalue problems associated with spatial and spectral projections. While that work demonstrated the usefulness of these functions computationally, the \emph{asymptotic} eigenvalue distribution and the corresponding Shannon-number law were not formally analyzed there. For standard Fourier spatio-spectral limiting operators, higher-dimensional eigenvalue-distribution estimates have been developed (see, e.g., \cite{IsraelMayeli2024,HughesIsraelMayeli2025}). However, in the present setting the spectral projection is the Paley-Wiener projection with an additional spherical-harmonic cutoff, which is not covered by the existing analysis. In particular, here the leading Shannon number is determined by the radial profile of the diagonal reproducing kernel rather than by a translation-invariant phase-space volume. In the setting considered here, the spectral projection is
\begin{align*}
\bprojectionInCFive_{\degAInCFive,K}:=\mathrm{proj}_{\spaceInCFive_{\degAInCFive,K}},
\end{align*}
with reproducing kernel $\KLK$, and the spatial projection associated with a bounded domain $D\subset\R^d$ is
\begin{align*}
\sprojection_D f:=\chi_D f.
\end{align*}
We consider the two concentration operators
\begin{align*}
\sprojection_D\bprojectionInCFive_{\degAInCFive,K}\sprojection_D
\qquad\text{and}\qquad
\bprojectionInCFive_{\degAInCFive,K}\sprojection_D\bprojectionInCFive_{\degAInCFive,K}.
\end{align*}
They are self-adjoint positive operators in $L^2(\R^d)$ and, when $D$ has finite measure, they are compact. Moreover, the two operators have the same non-zero eigenvalues. For $f\in \spaceInCFive_{\degAInCFive,K}$, the Rayleigh quotient of the second operator is
\begin{align*}
\frac{\langle \bprojectionInCFive_{\degAInCFive,K}\sprojection_D\bprojectionInCFive_{\degAInCFive,K}f,f\rangle_{L^2(\R^d)}}{\|f\|_{L^2(\R^d)}^2}
=
\frac{\int_D |f(x)|^2\diffsymbol x}{\int_{\R^d}|f(x)|^2\diffsymbol x}.
\end{align*}
Thus eigenvalues close to one correspond to SFB band-limited functions that are well concentrated in $D$, while eigenvalues close to zero correspond to functions whose energy is mostly outside $D$.

Define the Shannon number as the trace of the concentration operator:
\begin{align}\label{eq:trace-shannon-SFB}
\operatorname{Sh}_{\degAInCFive,K}(D)
&:=\operatorname{tr}(\sprojection_D\bprojectionInCFive_{\degAInCFive,K}\sprojection_D) \nonumber\\
&=\sum_i\lambdaInCFive_i(D;\degAInCFive,K)
=\int_D \KLK(x,x)\diffsymbol x.
\end{align}
Thus the Shannon number is an exact trace quantity, whereas its interpretation as the number of well-concentrated modes requires information about the distribution of the eigenvalues.

By the diagonal-kernel asymptotics of Section~\ref{secC5:sec2}, define the domain-dependent Shannon coefficient
\begin{align}\label{eq:radial-shannon-measure}
\mu_\kappa(D):=\int_D {\functionInCFiveFB}^{\langle d\rangle}_{\langle\kappa\rangle}(\|x\|)\diffsymbol x.
\end{align}
As $\degAInCFive,K\to\infty$ with $\degAInCFive/K\to\kappa$,
\[
\operatorname{Sh}_{\degAInCFive,K}(D)
=K^d\mu_\kappa(D)+\smallo(K^d).
\]
At the Paley--Wiener endpoint ($\kappa=\infty$), the density is constant, so $\mu_\infty(D)$ is proportional to the Euclidean volume of $D$. For finite $\kappa$, it is obtained by integrating a radial, non-translation-invariant density; hence the leading Shannon number may depend on the radial placement of $D$ as well as on its volume.

The trace asymptotic alone controls only the sum of the eigenvalues. The theorem below gives the stronger spectral statement: the number of near-one eigenvalues has the same leading term, while the number of eigenvalues in the intermediate transition region is of lower order. In this sense, the Shannon-number asymptotic predicts the effective dimension of the concentration problem.

\begin{thm}\label{thm:eigendistri}
Let $D\subset\R^d$ be a bounded Lipschitz domain, let $0<\kappa<\infty$, and let $\{\degAInCFive_j\}_{j\in\N}\subset\N_0$ and $\{K_j\}_{j\in\N}\subset\R_+$ satisfy $\degAInCFive_j\to\infty$, $K_j\to\infty$, and $\degAInCFive_j/K_j\to\kappa$. For each $j$, let $\{\lambdaInCFive_i(D;\degAInCFive_j,K_j)\}_{i=1}^{\infty}$ be the eigenvalues of $\sprojection_D\bprojectionInCFive_{\degAInCFive_j,K_j}\sprojection_D$, arranged in decreasing order. Then, for every $\epsilon\in(0,1/2)$,
\begin{align}\label{eqnC5:eigendistri1}
\lim_{j\to\infty}\frac{\sharp\{i:\lambdaInCFive_i(D;\degAInCFive_j,K_j)\geq 1-\epsilon\}}{K_j^{d}} = \int_{D} {\functionInCFiveFB}^{\langle d \rangle}_{\langle \kappa \rangle}(\|x\|)\diffsymbol x,
\end{align}
and
\begin{align}\label{eqnC5:eigendistri2}
\lim_{j\to\infty}\frac{\sharp\{i:\epsilon<\lambdaInCFive_i(D;\degAInCFive_j,K_j)< 1-\epsilon\}}{K_j^{d}} = 0.
\end{align} 
Here ${\functionInCFiveFB}^{\langle d \rangle}_{\langle \kappa \rangle}(\cdot)$ is the function given in Definitions \ref{def:UandW} and \ref{def:dilation}.
\end{thm}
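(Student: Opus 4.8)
The argument follows the Landau--Pollak trace strategy, adapted to the reproducing kernel $\KLK$. Write $P:=\sprojection_{D}\bprojectionInCFive_{\degAInCFive, K}\sprojection_{D}=(\bprojectionInCFive_{\degAInCFive, K}\sprojection_{D})^{*}(\bprojectionInCFive_{\degAInCFive, K}\sprojection_{D})$; since $D$ has finite measure, $\bprojectionInCFive_{\degAInCFive, K}\sprojection_{D}$ is Hilbert--Schmidt, so $P$ is a positive, self-adjoint, trace-class operator with $\|P\|\le 1$, hence $\lambdaInCFive_{i}\in[0,1]$ and $\sum_{i}\lambdaInCFive_{i}(1-\lambdaInCFive_{i})\le\sum_{i}\lambdaInCFive_{i}=\mathrm{tr}(P)<\infty$. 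The first ingredient is the trace asymptotics. By the reproducing property \eqref{eqnC5:defOfRPK}, $\mathrm{tr}(P)=\int_{D}\KLK(x,x)\diffsymbol x$. Since $\spaceInCFive_{\degAInCFive, K}\subseteq\mathrm{PW}_{K}$, for each fixed $x$ the function $\KLK(x,\cdot)$ equals the orthogonal projection of $\mathcal{K}_{\mathrm{PW}_{K}}(x,\cdot)$ onto $\spaceInCFive_{\degAInCFive, K}$ (the two have the same inner product with every element of $\spaceInCFive_{\degAInCFive, K}$), so $0\le\KLK(x,x)=\|\KLK(x,\cdot)\|_{L^{2}}^{2}\le\|\mathcal{K}_{\mathrm{PW}_{K}}(x,\cdot)\|_{L^{2}}^{2}=\mathcal{K}_{\mathrm{PW}_{K}}(0,0)=(2\pi)^{-d}\mathrm{vol}(\BB^{d})\,K^{d}$ for all $x$. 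Combining this uniform bound with the pointwise convergence $\KLK(x,x)/K^{d}\to{\functionInCFiveFB}^{\langle d\rangle}_{\langle\kappa\rangle}(\|x\|)$ for $\|x\|>0$, supplied by Theorem \ref{thm:characterizeKLKdim2} (for $d=2$) and Theorem \ref{thm:convergenceofshbKtoW} (for $d\ge3$), dominated convergence on the bounded set $D$ yields
\begin{align*}
\lim_{\degAInCFive,K\to\infty;\ \degAInCFive/K=\kappa}\frac{\mathrm{tr}(P)}{K^{d}}\ =\ \int_{D}{\functionInCFiveFB}^{\langle d\rangle}_{\langle\kappa\rangle}(\|x\|)\diffsymbol x\ =:\ A.
\end{align*}

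The crux is to show $\mathrm{tr}(P)-\mathrm{tr}(P^{2})=\smallo(K^{d})$. Using \eqref{eqnC5:defOfRPK} once more, $\KLK(x,x)=\int_{\R^{d}}|\KLK(x,y)|^{2}\diffsymbol y$ and $\mathrm{tr}(P^{2})=\int_{D}\int_{D}|\KLK(x,y)|^{2}\diffsymbol y\diffsymbol x$, so that $\sum_{i}\lambdaInCFive_{i}(1-\lambdaInCFive_{i})=\mathrm{tr}(P)-\mathrm{tr}(P^{2})=\int_{D}\int_{\R^{d}\setminus D}|\KLK(x,y)|^{2}\diffsymbol y\diffsymbol x$, a Landau--Widom-type "off-diagonal mass" quantity. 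The plan is a boundary-layer decomposition of $D$: for $\delta>0$ split $D$ into the boundary layer $D^{\mathrm{bd}}_{\delta}:=\{x\in D:\mathrm{dist}(x,\partial D)\le\delta\}$, which satisfies $\mathrm{vol}(D^{\mathrm{bd}}_{\delta})\le C_{D}\delta$ because $\partial D$ is Lipschitz, and the bulk $D\setminus D^{\mathrm{bd}}_{\delta}$. On $D^{\mathrm{bd}}_{\delta}$ the trivial bound $\int_{\R^{d}\setminus D}|\KLK(x,y)|^{2}\diffsymbol y\le\KLK(x,x)\le(2\pi)^{-d}\mathrm{vol}(\BB^{d})K^{d}$ gives a contribution $\bigo(\delta K^{d})$; on the bulk one has $\R^{d}\setminus D\subseteq\{y:\|x-y\|>\delta\}$, so everything reduces to the localization estimate
\begin{align}\label{eqnC5:prooflocest}
\sup_{\|x\|\le R}\int_{\|x-y\|>\delta}|\KLK(x,y)|^{2}\diffsymbol y\ \le\ \eta_{R}(\delta K)\,K^{d},
\end{align}
valid for each fixed $R>0$ with some $\eta_{R}(s)\to0$ as $s\to\infty$; granting \eqref{eqnC5:prooflocest} and taking $\delta=\delta_{K}\to0$ with $\delta_{K}K\to\infty$ (e.g.\ $\delta_{K}=K^{-1/2}$) makes both contributions $\smallo(K^{d})$. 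To prove \eqref{eqnC5:prooflocest}, I would insert the explicit representation \eqref{eqnC5:K}, integrate out the angular variable by Parseval (reducing the radially-separated part $\{|r_{x}-r_{y}|>\delta/2\}$ to sums $\tfrac{r_{x}^{2-d}}{\mathrm{vol}(\Sphered)}\sum_{\degaInCFive\le\degAInCFive}\mathrm{dim}(H_{\degaInCFive}^{d})\int_{|r_{x}-s|>\delta/2}h_{\degaInCFive}(r_{x},s)^{2}s\diffsymbol s$ with $h_{\degaInCFive}(r,s):=\int_{0}^{K}J_{\degaInCFive+\frac{d-2}{2}}(kr)J_{\degaInCFive+\frac{d-2}{2}}(ks)k\diffsymbol k$), exploit the scaling $h_{\degaInCFive}(r,s)=K^{2}h_{\degaInCFive}(Kr,Ks)|_{K=1}$ and the reproducing identity $\int_{0}^{\infty}h_{\degaInCFive}(r,s)^{2}s\diffsymbol s=h_{\degaInCFive}(r,r)$ together with the Bessel estimates \eqref{eqnC5:approxJv1} and \eqref{bound:inpBesselj}--\eqref{bound:maxJv_Larget}, and treat the angularly-separated part by Abel summation against the spherical Dirichlet kernels $\sum_{\degaInCFive\le m}\tfrac{\mathrm{dim}(H_{\degaInCFive}^{d})}{\mathrm{vol}(\Sphered)}P_{\degaInCFive}^{(d)}$; in both parts the oscillatory cancellation coming from the cosine asymptotics \eqref{eqnC5:approxJv1}/\eqref{eqnC5:secsubstitute} furnishes the decaying factor and, by \eqref{eqnC5:diagonalK}, the radially-separated part is then controlled by $\le\omega(\delta K)\,\KLK(x,x)$. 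The main obstacle is precisely carrying this out uniformly in the Bessel order up to $\degAInCFive=\kappa K$ — in particular through the turning-point regime $kr\approx\degaInCFive$, where the cosine asymptotics break down and Airy-type bounds are needed — and uniformly for $x$ near the origin, where $r_{x}$ may be as small as $1/K$.

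Finally, \eqref{eqnC5:eigendistri1} and \eqref{eqnC5:eigendistri2} follow from the two ingredients above by an elementary computation. Since $\lambdaInCFive(1-\lambdaInCFive)\ge\epsilon(1-\epsilon)$ whenever $\lambdaInCFive\in(\epsilon,1-\epsilon)$,
\begin{align*}
\epsilon(1-\epsilon)\,\sharp\{i:\epsilon<\lambdaInCFive_{i}<1-\epsilon\}\ \le\ \sum_{i}\lambdaInCFive_{i}(1-\lambdaInCFive_{i})\ =\ \mathrm{tr}(P)-\mathrm{tr}(P^{2})\ =\ \smallo(K^{d}),
\end{align*}
which gives \eqref{eqnC5:eigendistri2}. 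For \eqref{eqnC5:eigendistri1}, set $N:=\sharp\{i:\lambdaInCFive_{i}\ge1-\epsilon\}$ and split $\mathrm{tr}(P)=\sum_{\lambdaInCFive_{i}\ge1-\epsilon}\lambdaInCFive_{i}+\sum_{\epsilon<\lambdaInCFive_{i}<1-\epsilon}\lambdaInCFive_{i}+\sum_{\lambdaInCFive_{i}\le\epsilon}\lambdaInCFive_{i}$: the middle sum is $\le\sharp\{i:\epsilon<\lambdaInCFive_{i}<1-\epsilon\}=\smallo(K^{d})$, the last is $\le\tfrac{1}{1-\epsilon}\sum_{i}\lambdaInCFive_{i}(1-\lambdaInCFive_{i})=\smallo(K^{d})$, and $0\le N-\sum_{\lambdaInCFive_{i}\ge1-\epsilon}\lambdaInCFive_{i}=\sum_{\lambdaInCFive_{i}\ge1-\epsilon}(1-\lambdaInCFive_{i})\le\tfrac{1}{1-\epsilon}\sum_{i}\lambdaInCFive_{i}(1-\lambdaInCFive_{i})=\smallo(K^{d})$. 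Hence $N=\mathrm{tr}(P)+\smallo(K^{d})$; dividing by $K^{d}$ and invoking the trace asymptotics $A$ from the first paragraph gives $\lim N/K^{d}=A=\int_{D}{\functionInCFiveFB}^{\langle d\rangle}_{\langle\kappa\rangle}(\|x\|)\diffsymbol x$, which is \eqref{eqnC5:eigendistri1}.
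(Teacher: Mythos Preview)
Your overall architecture---trace asymptotics, then $\mathrm{tr}(P)-\mathrm{tr}(P^{2})=\smallo(K^{d})$, then the elementary counting---matches the paper exactly, and your trace paragraph and final paragraph are correct (the paper uses a slightly different chain with an auxiliary cut at $1/2$ and a parameter $\delta\to 0$, but your version is equally valid). The genuine gap is precisely where you flag it: the localization estimate \eqref{eqnC5:prooflocest}. Your proposed route---plugging in the explicit kernel \eqref{eqnC5:K}, Parseval in the angular variable, Abel summation against spherical Dirichlet kernels, and uniform Bessel bounds through the turning-point regime---is the hard way, and you correctly identify that pushing the oscillatory estimates uniformly in the order up to $\degAInCFive=\kappa K$ is the obstruction.

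The paper bypasses all of this with one structural observation you already made but did not exploit for localization: since $\spaceInCFive_{\degAInCFive,K}\subset\mathrm{PW}_{K}$, for each fixed $x$ and each radius $r_{y}$ one has
\[
\KLK(x,r_{y}\xi_{y})=\mathrm{proj}_{\mathrm{Harm}_{\degAInCFive}(\Sphered)}\bigl\{\mathcal{K}_{\mathrm{PW}_{K}}(x,r_{y}\,\cdot\,)\bigr\}(\xi_{y}).
\]
Now split $\mathcal{K}_{\mathrm{PW}_{K}}(x,\cdot)=F_{1}+F_{2}$ with $F_{1}$ supported in the Euclidean ball of radius $\Delta/K$ about $x$ (the PW kernel localizes trivially by dilation), and set $\mathcal{K}_{i}:=\mathrm{proj}_{\mathrm{Harm}_{\degAInCFive}}F_{i}$ shell by shell. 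The tail $\mathcal{K}_{2}$ is small in $L^{2}(\R^{d})$ simply because projections contract. For $\mathcal{K}_{1}$, the support of $F_{1}(r\,\cdot\,)$ lies in a spherical cap of aperture $\bigo(1/\degAInCFive)$ (here $\degAInCFive=\kappa K$ is essential, and one needs $\|x\|\ge r_{0}>0$), so one only needs a \emph{spherical-harmonic} localization lemma: if $f\in L^{2}(\Sphered)$ is supported in a cap of radius $\varphi/\degAInCFive$, then $\mathrm{proj}_{\mathrm{Harm}_{\degAInCFive}}f$ has at most $\epsilon\|f\|^{2}$ of its mass outside the concentric cap of radius $\Phi/\degAInCFive$, for $\Phi$ depending only on $\varphi,\epsilon$. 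That lemma is proved by one Cauchy--Schwarz and Szeg\H{o}'s asymptotics for the Jacobi (Christoffel--Darboux) kernel---no Bessel turning-point analysis at all. The resulting concentration is stated in tesseroid neighborhoods $\mathcal{U}(x;\Delta/K,\Phi/\degAInCFive)$, and the boundary decomposition then also carries a small excised ball $\BB^{d}_{r_{0}}$ around the origin (sent to zero at the end), which your purely Euclidean layer $\{\mathrm{dist}(x,\partial D)\le\delta\}$ does not account for.
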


\begin{rem}The endpoint cases $\kappa=0$ and $\kappa=\infty$ follow by monotonicity, as in Remark~\ref{rem:extension_analysisdiagonalKDimd}, using
$\sprojection_{D}\bprojectionInCFive_{\degAInCFive_1, K}\sprojection_{D} \preceq \sprojection_{D}\bprojectionInCFive_{\degAInCFive_2, K}\sprojection_{D}\preceq \sprojection_{D}\mathrm{proj}_{\mathrm{PW}_{K}}\sprojection_{D}$
whenever $\degAInCFive_1\leq \degAInCFive_2$, where $\preceq$ denotes the Loewner order.
\end{rem}

\begin{rem}[Domain-dependent Shannon coefficient]\label{rem:radial-shannon-measure}
With the notation \eqref{eq:radial-shannon-measure}, Theorem~\ref{thm:eigendistri} says that, along any sequence with $\degAInCFive/K\to\kappa$,
\begin{align*}
\sharp\{i:\lambdaInCFive_i(D;\degAInCFive,K)\geq 1-\epsilon\}
\sim K^d\mu_\kappa(D),
\end{align*}
up to a lower-order transition region. This is the SFB analogue of the classical space-bandwidth law: $K^d$ supplies the bandwidth scaling, $\mu_\kappa(D)$ is the domain-dependent Shannon coefficient, and $K^d\mu_\kappa(D)$ is the leading Shannon number, or leading effective dimension. In the endpoint $\kappa=\infty$, corresponding to the Paley--Wiener projection, this coefficient reduces to a constant multiple of the Lebesgue measure of $D$. For finite $\kappa$, however, the coefficient depends not only on the volume of $D$, but also on its radial placement relative to the transition scale $\kappa$. Consequently, two domains with the same volume may have different leading Shannon numbers.
\end{rem}
	
The rest of this section is devoted to the proof of Theorem \ref{thm:eigendistri}. The crucial step is to describe how the energy of $\KLK(x,y)$ concentrates near the diagonal $x=y$; this is done in Proposition \ref{prop:concentrationofK\degAInCFive, K}. Before proceeding to the main proof, we need the following property of spherical harmonic projections.

\begin{lem}\label{lem:concentrationofSH}
For any $\varphi>0$ and $0<\epsilon<1$, there exists a constant $\Phi$, depending on $\varphi$ and $\epsilon$, such that every $f\in L^2(\Sphered)$ supported on the spherical cap $\mathcal{C}(x_0,\frac{\varphi}{\degAInCFive})$ satisfies, for all sufficiently large $\degAInCFive$,
\begin{align}
\|\mathrm{proj}_{\mathrm{Harm}_{\degAInCFive}(\Sphered)} f\|^{2}_{L^2(\mathcal{C}^{\complement}(x_0,\frac{\Phi}{\degAInCFive}))} \leq \epsilon \|f\|^{2}_{L^2(\Sphered)},
\end{align}
where $\mathcal{C}(x,\varphi):=\{y\in\Sphered: \arccos(\langle x, y\rangle) < \varphi\}$ and $\mathcal{C}^{\complement}(x,\varphi):=\Sphered\setminus\mathcal{C}(x,\varphi)$.
\end{lem}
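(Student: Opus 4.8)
\textbf{Proof proposal for Lemma \ref{lem:concentrationofSH}.}

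The plan is to exploit the known localization of the spherical reproducing kernel, i.e. the zonal kernel of $\mathrm{Harm}_{\degAInCFive}(\Sphered)$, around its diagonal, together with a rescaling that makes the $1/\degAInCFive$-cap the natural unit of length. Concretely, write $K_{\degAInCFive}^{\mathrm{sph}}(x,y)=\sum_{\degaInCFive=0}^{\degAInCFive}\frac{\mathrm{dim}(H_{\degaInCFive}^d)}{\mathrm{vol}(\Sphered)}P_{\degaInCFive}^{(d)}(\langle x,y\rangle)$ for the reproducing kernel of $\mathrm{Harm}_{\degAInCFive}(\Sphered)$; then $\mathrm{proj}_{\mathrm{Harm}_{\degAInCFive}(\Sphered)}f(x)=\int_{\Sphered}f(y)K_{\degAInCFive}^{\mathrm{sph}}(x,y)\diffsymbol\omega(y)$. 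Since $f$ is supported on $\mathcal{C}(x_0,\varphi/\degAInCFive)$, for $x\in\mathcal{C}^{\complement}(x_0,\Phi/\degAInCFive)$ the Cauchy--Schwarz inequality gives
\begin{align*}
|\mathrm{proj}_{\mathrm{Harm}_{\degAInCFive}(\Sphered)}f(x)|^2 \leq \|f\|_{L^2(\Sphered)}^2 \int_{\mathcal{C}(x_0,\varphi/\degAInCFive)}|K_{\degAInCFive}^{\mathrm{sph}}(x,y)|^2\,\diffsymbol\omega(y),
\end{align*}
so that, integrating $x$ over $\mathcal{C}^{\complement}(x_0,\Phi/\degAInCFive)$, it suffices to show
\begin{align*}
\sup_{y\in\mathcal{C}(x_0,\varphi/\degAInCFive)}\int_{\mathcal{C}^{\complement}(x_0,\Phi/\degAInCFive)}|K_{\degAInCFive}^{\mathrm{sph}}(x,y)|^2\,\diffsymbol\omega(x)\,\cdot\,\mathrm{vol}\!\left(\mathcal{C}(x_0,\varphi/\degAInCFive)\right) \leq \epsilon
\end{align*}
for $\degAInCFive$ large and $\Phi$ chosen appropriately. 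The cap has volume $\asymp (\varphi/\degAInCFive)^{d-1}$, and one expects the tail integral of $|K_{\degAInCFive}^{\mathrm{sph}}|^2$ to be $\asymp \degAInCFive^{d-1}\cdot(\text{decaying factor in }\Phi)$, so the product is governed by a function of $\Phi$ and $\varphi$ alone that can be driven below $\epsilon$.

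The key analytic input is a quantitative off-diagonal decay estimate for $K_{\degAInCFive}^{\mathrm{sph}}$. Writing $\langle x,y\rangle=\cos\theta$, there is the classical Christoffel--Darboux-type / Mehler--Heine asymptotic that, uniformly for $\theta\in(0,\pi/2)$ say, $K_{\degAInCFive}^{\mathrm{sph}}(x,y)=\bigo\!\big(\degAInCFive^{d-1}(\degAInCFive\theta)^{-(d-1)/2-1/2}\big)$ — in essence $P_{\degaInCFive}^{(d)}(\cos\theta)$ behaves like a Bessel function $J_{(d-2)/2}(\degaInCFive\theta)$ normalized by $\degaInCFive\theta$, so summing in $\degaInCFive$ and using an Abel-summation / cancellation argument (exactly as in the radial analysis of Section \ref{secC5:sec2}) one gains an extra power of $\degAInCFive\theta$. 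Then on the annulus $\Phi/\degAInCFive\leq\theta\leq\pi$, substituting $u=\degAInCFive\theta$ turns the tail integral $\int_{\Phi/\degAInCFive}^{\pi}|K_{\degAInCFive}^{\mathrm{sph}}(x,y)|^2(\sin\theta)^{d-2}\diffsymbol\theta$, after accounting for the $\degAInCFive^{d-1}$-scaling of $\mathrm{vol}$, into an integral $\lesssim\int_{\Phi}^{\infty}u^{d-2}\cdot u^{-(d-1)-1}\,\diffsymbol u=\int_{\Phi}^{\infty}u^{-3}\,\diffsymbol u\asymp\Phi^{-2}$ (uniformly in $\degAInCFive$), which is made $\leq\epsilon/(C\varphi^{d-1})$ by taking $\Phi$ large; the small adjustment replacing $y$ by $x_0$ costs only the triangle inequality on geodesic distance, since $\theta(x,y)\geq\theta(x,x_0)-\varphi/\degAInCFive\geq\tfrac12\theta(x,x_0)$ once $\Phi\geq 2\varphi$. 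I would also handle the region $\theta$ near $\pi$ (antipodal cap) separately or simply absorb it, since $P_{\degaInCFive}^{(d)}$ is bounded there.

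The main obstacle I anticipate is producing the off-diagonal decay of $K_{\degAInCFive}^{\mathrm{sph}}$ with the correct exponent and, crucially, with \emph{uniformity} in $\degAInCFive$ down to scale $\theta\asymp\Phi/\degAInCFive$ — the naive bound $|P_{\degaInCFive}^{(d)}(\cos\theta)|\lesssim(\degaInCFive\sin\theta)^{-(d-1)/2}$ summed trivially over $\degaInCFive\leq\degAInCFive$ only gives $\degAInCFive^{(d+1)/2}\theta^{-(d-1)/2}$, which after the volume scaling yields $\int_\Phi u^{-(d-1)}u^{d-2}\,\diffsymbol u$, i.e. only a logarithmic (for $d=2$) or constant gain, not enough. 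The fix is to not sum absolute values: use the Darboux formula expressing the partial sum $K_{\degAInCFive}^{\mathrm{sph}}$ as a single Gegenbauer polynomial of degree $\degAInCFive$ (the Christoffel--Darboux kernel for Gegenbauer weights collapses to essentially $C_{\degAInCFive}^{(d-1)/2}$-type terms), and then apply the one-polynomial asymptotic once. Alternatively, mirror the Section \ref{secC5:sec2} trick: telescoping via the Bessel recurrence already produced $\sum_{\degaInCFive}J_{v_0+\degaInCFive}^2$ bounds, and the same Abel-summation against $\mathrm{dim}(H_{\degaInCFive}^d)$ should give a genuinely summed (not summed-in-absolute-value) estimate with the extra decay. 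Once that uniform kernel bound is in hand, the rest is the routine change of variables and choice of $\Phi$ sketched above.
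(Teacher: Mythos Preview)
Your proposal is correct and follows essentially the paper's proof: Cauchy--Schwarz reduces the claim to bounding $\mathrm{vol}(\mathcal{C}(x_0,\varphi/\degAInCFive))\cdot\int_{\mathcal{C}^\complement}|K_{\degAInCFive}^{\mathrm{sph}}|^2$, and the paper obtains the needed off-diagonal decay exactly via the ``fix'' you single out---the Christoffel--Darboux identity collapses $K_{\degAInCFive}^{\mathrm{sph}}$ to a single Jacobi polynomial $\simeq \degAInCFive^{(d-1)/2}P_{\degAInCFive}^{(d-1)/2,(d-3)/2}(\cos\theta)$, to which Szeg\H{o}'s asymptotic is applied, giving the final bound $c\,\varphi^{d-1}/(\Phi-\varphi)+\bigo(\degAInCFive^{-1})$. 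Two small corrections: your arithmetic slips by one power (with your own exponents $u^{d-2}\cdot u^{-(d-1)-1}=u^{-2}$, not $u^{-3}$, so the tail is $\asymp\Phi^{-1}$, matching the paper's $(\Phi-\varphi)^{-1}$), and the antipodal region $\theta\approx\pi$ is indeed handled separately by an easy $\bigo(\degAInCFive^{-1})$ estimate, as you anticipate.
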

\begin{proof} We use the integral representation of $\mathrm{proj}_{\mathrm{Harm}_{\degAInCFive}(\Sphered)}$ (see, e.g., \cite[Prop. 2.2.1]{DaiXu}):
\begin{align}
\mathrm{proj}_{\mathrm{Harm}_{\degAInCFive}(\Sphered)}f(y)=\int_{\Sphered} f(x) \mathcal{K}_{\mathrm{Harm_{\degAInCFive}}}(x,y)\diffsymbol \omega(x),
\end{align}
where $\mathcal{K}_{\mathrm{Harm_{\degAInCFive}}}$ is the reproducing kernel of $\mathrm{Harm}_{\degAInCFive}(\Sphered)$, which has the form
\begin{align}
\mathcal{K}_{\mathrm{Harm_{\degAInCFive}}}(x,y)=\frac{(d-1)_{\degAInCFive}}{(\frac{d-1}{2})_{\degAInCFive}}P_{\degAInCFive}^{\frac{d-1}{2},\frac{d-3}{2}}(\langle x,y\rangle)\simeq \degAInCFive^{\frac{d-1}{2}}P_{\degAInCFive}^{\frac{d-1}{2},\frac{d-3}{2}}(\langle x,y\rangle).
\end{align}
Here $(\cdot)_{\degAInCFive}$ denotes the Pochhammer symbol, and $P_{\degAInCFive}^{\frac{d-1}{2},\frac{d-3}{2}}$ denotes the corresponding Jacobi polynomial.
Using the Cauchy-Schwarz inequality and the localized support of $f$, we obtain the estimate
\begin{align}
|\mathrm{proj}_{\mathrm{Harm}_{\degAInCFive}(\Sphered)}f(y)|^2\leq \int_{\mathcal{C}(x_0,\frac{\varphi}{\degAInCFive})}\left| \mathcal{K}_{\mathrm{Harm_{\degAInCFive}}}(x,y)\right|^2\diffsymbol \omega(x)\|f\|^2_{L^2(\Sphered)},
\end{align}
which yields
\begin{align}\label{eqnC5:CSInq}
&\int_{\mathcal{C}^{\complement}(x_0,\frac{\Phi}{\degAInCFive})}|\mathrm{proj}_{\mathrm{Harm}_{\degAInCFive}(\Sphered)}f(y)|^2 \diffsymbol \omega(y)\\
\nonumber		&\leq \left(\int_{\mathcal{C}^{\complement}(x_0,\frac{\Phi}{\degAInCFive})}\int_{\mathcal{C}(x_0,\frac{\varphi}{\degAInCFive})}\left|\mathcal{K}_{\mathrm{Harm_{\degAInCFive}}}(x,y)\right|^2\diffsymbol \omega(x) \diffsymbol \omega(y)\right)\|f\|^2_{L^2(\Sphered)}.
\end{align}
To bound the quantity in parentheses, we follow \cite{Marzo2007} and apply Szegő's estimate:
\begin{align}\label{eqnC5:estimateofJacobiP1}
P^{\lambda+1,\lambda}_n(\cos \theta)=\frac{k(\theta)}{\sqrt{n}}\left(\cos\left((n+\lambda+1)\theta-\frac{\pi}{2}\left(\lambda+\frac{3}{2}\right)\right)+\frac{\mathcal{O}(1)}{n\sin\theta}\right)\textnormal{ for }n\to\infty,
\end{align}
where $k(\theta)=\pi^{-1/2}(\sin\frac{\theta}{2})^{-\lambda-3/2}(\cos\frac{\theta}{2})^{-\lambda-1/2}$. This estimate holds for fixed $c>0$ and $c/L<\theta<\pi-c/L$.

For the right-hand side of~\eqref{eqnC5:CSInq},
\begin{align}
&	\int_{\mathcal{C}^{\complement}(x_0,\frac{\Phi}{\degAInCFive})}\int_{\mathcal{C}(x_0,\frac{\varphi}{\degAInCFive})}\left|\mathcal{K}_{\mathrm{Harm_{\degAInCFive}}}(x,y)\right|^2\diffsymbol \omega(x) \diffsymbol \omega(y) \\
\nonumber	&\leq \mathrm{vol}(\mathcal{C}(x_0,\frac{\varphi}{\degAInCFive})) \int_{\mathcal{C}^{\complement}(x_0,\frac{\Phi-\varphi}{\degAInCFive})}\left| \mathcal{K}_{\mathrm{Harm_{\degAInCFive}}}(x_0,y)\right|^2\diffsymbol \omega(y)\\
\nonumber		&\lesssim (\frac{\varphi}{\degAInCFive})^{d-1} %(\frac{(d-1)_L}{(\frac{d-1}{2})_L})^2
{\degAInCFive}^{d-1}\int_{\frac{\Phi-\varphi}{\degAInCFive}}^{\pi} \left|P_{\degAInCFive}^{\frac{d-1}{2},\frac{d-3}{2}}(\cos \theta)\right|^2\sin^{d-2}\theta \diffsymbol \theta\\
\nonumber		&\lesssim \varphi^{d-1} \int_{\frac{\Phi-\varphi}{\degAInCFive}}^{\pi}\left| P_{\degAInCFive}^{\frac{d-1}{2},\frac{d-3}{2}}(\cos \theta)\right|^2\sin^{d-2}\theta \diffsymbol \theta.
\end{align}
We split the interval $(\frac{\Phi-\varphi}{\degAInCFive},\pi)$ into $(\frac{\Phi-\varphi}{\degAInCFive},\frac{\pi}{2})$, $(\frac{\pi}{2},\pi-\frac{\Phi-\varphi}{\degAInCFive})$, and $(\pi-\frac{\Phi-\varphi}{\degAInCFive},\pi)$. The integral of $|P_{\degAInCFive}^{\frac{d-1}{2},\frac{d-3}{2}}(\cos\theta)|^2\sin^{d-2}\theta$ over the last two intervals is $\bigo(\degAInCFive^{-1})$ (see, e.g., \cite[App.~C, proof of (4.17), particularly (C.14)--(C.15)]{gerhards2023slepian}). On the first interval,
\begin{align}
\nonumber		\int_{\frac{\Phi-\varphi}{\degAInCFive}}^{\pi/2} \left|P_{\degAInCFive}^{\frac{d-1}{2},\frac{d-3}{2}}(\cos \theta)\right|^2\sin^{d-2}\theta \diffsymbol \theta
&\lesssim \int_{\frac{\Phi-\varphi}{\degAInCFive}}^{\pi/2} \frac{k^2(\theta)}{\degAInCFive} \sin^{d-2}\theta \diffsymbol \theta\simeq  \int_{\frac{\Phi-\varphi}{\degAInCFive}}^{\pi/2} \frac{2^{d-2}}{\degAInCFive \sin^2\frac{\theta}{2}}  \diffsymbol \theta\\
&\lesssim \frac{1}{\degAInCFive}\cot\frac{\Phi-\varphi}{2\degAInCFive}\simeq (\Phi-\varphi)^{-1}.
\end{align}
Consequently,
\begin{align}
\nonumber \frac{	\|\mathrm{proj}_{\mathrm{Harm}_{\degAInCFive}(\Sphered)} f\|^{2}_{L^2(\mathcal{C}^{\complement}(x_0,\frac{\Phi}{\degAInCFive}))} }{\|f\|^{2}_{L^2(\Sphered)}}&\leq	\int_{\mathcal{C}^{\complement}(x_0,\frac{\Phi}{\degAInCFive})}\int_{\mathcal{C}(x_0,\frac{\varphi}{\degAInCFive})}\left|\mathcal{K}_{\mathrm{Harm_{\degAInCFive}}}(x,y)\right|^2\diffsymbol \omega(x) \diffsymbol \omega(y) \\
&\leq c\frac{\varphi^{d-1}}{\Phi-\varphi} +\bigo({\degAInCFive}^{-1}),
\end{align}
for a positive constant $c$ that is independent of $\degAInCFive,\Phi,\varphi$. Choosing $\Phi-\varphi$ sufficiently large, of order $\epsilon^{-1}\varphi^{d-1}$, gives the desired estimate.
\end{proof}

\begin{prop}\label{prop:concentrationofK\degAInCFive, K}
Let $r_0>0$ and $0<\kappa<\infty$. For every $\varepsilon>0$, there exist $\Delta,\Phi>0$ such that, for every $x\in\R^d$ with $\|x\|>r_0$ and all sufficiently large $\degAInCFive,K$ satisfying $\degAInCFive/K=\kappa$,
\begin{align}\label{eqnC5:concentrationKLK}
\|\KLK (x,\cdot)\|^{2}_{L^2(\mathcal{U}^{\complement}(r_x\xi_x;\frac{\Delta}{K},\frac{\Phi}{\degAInCFive}))} \leq \varepsilon K^{d},
\end{align}
where, for $r_x,\delta,\theta\geq0$ and $\xi_x\in\Sphered$, $\mathcal{U}(r_x\xi_x;\delta,\theta)$ denotes the sharp tessroid neighborhood of $x=r_x\xi_x$:
\begin{align*}
\mathcal{U}(r_x\xi_x;\delta,\theta):=\{r\xi\in\R^d: |r-r_x| < \delta, \xi\in\mathcal{C}(\xi_x,\theta) \},
\end{align*}
and $\mathcal{U}^{\complement}(r_x\xi_x;\delta,\theta)$ denotes its complement in $\R^d$.
\end{prop}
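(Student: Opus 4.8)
The plan is to peel the complement of the tessroid into a radial part and an angular part and to bound each by $\tfrac{\varepsilon}{2}K^{d}$. Write $\nu_\degaInCFive:=\degaInCFive+\tfrac{d-2}{2}$, $h_\nu(\rho,\sigma):=\int_{0}^{K}J_\nu(k\rho)J_\nu(k\sigma)k\,\diffsymbol k$, and $A_{\mathrm{ang}}:=\{r\xi:|r-r_x|<\Delta/K,\ \xi\in\mathcal{C}^{\complement}(\xi_x,\Phi/\degAInCFive)\}$, so that $\mathcal{U}^{\complement}(r_x\xi_x;\tfrac{\Delta}{K},\tfrac{\Phi}{\degAInCFive})\subseteq\{|r-r_x|\geq\tfrac{\Delta}{K}\}\cup A_{\mathrm{ang}}$. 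I record three preliminary facts: (i) by \eqref{eqnC5:K} and the zonal orthogonality $\int_{\Sphered}P_{\degaInCFive}^{(d)}(\langle\xi_x,\cdot\rangle)\overline{P_{\degaInCFive'}^{(d)}(\langle\xi_x,\cdot\rangle)}\diffsymbol\omega=\delta_{\degaInCFive\degaInCFive'}\tfrac{\mathrm{vol}(\Sphered)}{\mathrm{dim}(H_{\degaInCFive}^{d})}$,
\[
\int_{\Sphered}|\KLK(x,r\xi)|^{2}\diffsymbol\omega(\xi)=\frac{(r_xr)^{2-d}}{\mathrm{vol}(\Sphered)}\sum_{\degaInCFive=0}^{\degAInCFive}\mathrm{dim}(H_{\degaInCFive}^{d})\,h_{\nu_\degaInCFive}(r_x,r)^{2};
\]
(ii) $\int_{0}^{\infty}h_\nu(r_x,r)^{2}r\,\diffsymbol r=h_\nu(r_x,r_x)$, the reproducing property of the Hankel band-limited projection; and (iii) by \eqref{eqnC5:diagonalK}, Abel summation and $\sum_{\degaInCFive=0}^{\degAInCFive}J_{v_0+\degaInCFive}^{2}\leq1$ (Remark~\ref{remC5:remarkofKeyequation}), the uniform bound $\KLK(x,x)\leq C_{d,\kappa,r_0}K^{d}$ for all $\|x\|>r_0$ and $K$ large. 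The parameters will be chosen, with an auxiliary $\delta_0\in(0,1)$, in the order $\delta_0$, then $\Delta$, then $\Phi$ (and then $K$ large), with implicit constants allowed to depend on $\delta_0,\kappa,r_0,d$.

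\emph{Radial part.} By (i)--(ii), $\|\KLK(x,\cdot)\|^{2}_{L^{2}(\{|r-r_x|\geq\Delta/K\})}=\tfrac{r_x^{2-d}}{\mathrm{vol}(\Sphered)}\sum_{\degaInCFive\leq\degAInCFive}\mathrm{dim}(H_{\degaInCFive}^{d})\int_{|r-r_x|\geq\Delta/K}h_{\nu_\degaInCFive}(r_x,r)^{2}r\,\diffsymbol r$, and I split the degrees according to the position of $\nu_\degaInCFive$ relative to the Bessel turning point $\approx Kr_x$. For the \emph{oscillatory} degrees $\nu_\degaInCFive\leq(1-\delta_0)Kr_x$ I insert the Christoffel--Darboux/Lommel closed form
\[
h_\nu(r_x,r)=\frac{K\big(r_xJ_\nu(Kr)J_\nu'(Kr_x)-rJ_\nu(Kr_x)J_\nu'(Kr)\big)}{r^{2}-r_x^{2}},
\]
and use the modulus bounds $|J_\nu(t)|,|J_\nu'(t)|\lesssim t^{-1/2}$ valid for $t\geq(1-\tfrac{\delta_0}{2})^{-1}\nu$ (cf.~\eqref{bound:maxJv_Larget}) together with $J_\nu(t)\leq(t/2)^{\nu}/\substigamma(\nu+1)$ for $t<\nu$ (cf.~\eqref{eqnC5:multiplicationJv}): near $r=r_x$ one gets $h_{\nu_\degaInCFive}(r_x,r)^{2}r\lesssim\big(r_x(r-r_x)^{2}\big)^{-1}$, a Bessel-modulated sinc with $(r-r_x)^{-2}$ tails, while the contribution of $r$ far from $r_x$ is negligible relative to $h_{\nu_\degaInCFive}(r_x,r_x)$ as $K\to\infty$; hence $\int_{|r-r_x|\geq\Delta/K}h_{\nu_\degaInCFive}(r_x,r)^{2}r\,\diffsymbol r\leq\eta(\Delta)\,h_{\nu_\degaInCFive}(r_x,r_x)$ with $\eta(\Delta)\to0$ as $\Delta\to\infty$, uniformly in such $\degaInCFive$ and in $\|x\|>r_0$; summing bounds the oscillatory contribution by $\eta(\Delta)\,\KLK(x,x)\leq\eta(\Delta)\,C_{d,\kappa,r_0}K^{d}$. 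For the remaining degrees — present only when $r_x<\kappa/(1-\delta_0)$ — I do not seek radial localization but estimate their \emph{entire} contribution to $\KLK(x,x)$: the $\bigo(K^{1/3}\log K)$ turning-point degrees $|\nu_\degaInCFive-Kr_x|\leq C_1K^{1/3}\log K$ contribute $\smallo(K^{d})$ via $\max_t|J_\nu(t)|\lesssim\nu^{-1/3}$ \eqref{bound:maxJv}; the degrees $\nu_\degaInCFive\geq Kr_x+C_1K^{1/3}\log K$ contribute $\smallo(K^{d})$ since there $J_{\nu_\degaInCFive}(kr_x)$ is super-polynomially small by the uniform turning-point (Airy) asymptotics; and the near-turning oscillatory degrees $(1-\delta_0)Kr_x<\nu_\degaInCFive<Kr_x-C_1K^{1/3}\log K$ contribute $\lesssim\delta_0^{3/2}K^{d}$, because $h_{\nu_\degaInCFive}(r_x,r_x)\lesssim r_x^{-2}\sqrt{(Kr_x)^{2}-\nu_\degaInCFive^{2}}\lesssim\delta_0^{1/2}K/r_x$, there are $\lesssim\delta_0Kr_x$ of them, and $\mathrm{dim}(H_{\degaInCFive}^{d})\lesssim(Kr_x)^{d-2}$, so that all $r_x$-powers cancel (here a suitable constant $C_1>0$ is fixed). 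Altogether $\|\KLK(x,\cdot)\|^{2}_{L^{2}(\{|r-r_x|\geq\Delta/K\})}\leq\big(\eta(\Delta)+\delta_0^{3/2}\big)C_{d,\kappa,r_0}K^{d}+\smallo(K^{d})$, which is $\leq\tfrac{\varepsilon}{2}K^{d}$ once $\delta_0$ is fixed small, then $\Delta$ large, then $K$ large.

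\emph{Angular part.} Fix $r$ with $|r-r_x|<\Delta/K$ and put $\Psi_r:=\KLK(x,r\cdot)=\sum_{\degaInCFive\leq\degAInCFive}b_\degaInCFive(r)\tfrac{\mathrm{dim}(H_{\degaInCFive}^{d})}{\mathrm{vol}(\Sphered)}P_{\degaInCFive}^{(d)}(\langle\xi_x,\cdot\rangle)\in\mathrm{Harm}_{\degAInCFive}(\Sphered)$, with $b_\degaInCFive(r)=(r_xr)^{(2-d)/2}h_{\nu_\degaInCFive}(r_x,r)$. From the closed form one checks that, for $r$ in the window and $\nu_\degaInCFive\ll Kr_x$, $b_\degaInCFive(r)$ is — up to $\bigo((Kr_x)^{-1})$ relative corrections — \emph{independent of $\degaInCFive$}, equal to $\tfrac{r_x^{1-d}K}{\pi}\cdot\tfrac{\sin(K(r-r_x))}{K(r-r_x)}$, and then ramps down to $0$ across $\nu_\degaInCFive\approx Kr_x$; thus $\Psi_r$ agrees, up to a small $L^{2}(\Sphered)$-error, with a constant multiple of the spherical reproducing kernel $\mathcal{K}_{\mathrm{Harm}_{m}}(\langle\xi_x,\cdot\rangle)$ at the \emph{effective} degree $m:=\min(\degAInCFive,\lceil K\|x\|\rceil)$. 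Writing that kernel as $\mathrm{proj}_{\mathrm{Harm}_{m}(\Sphered)}$ of a mollifier $\mu_{\varphi_0/m}$ of the unit mass at $\xi_x$ supported on $\mathcal{C}(\xi_x,\varphi_0/m)$ — the difference being controlled precisely because the coefficient sequence varies slowly in $\degaInCFive$ — Lemma~\ref{lem:concentrationofSH} (applied at degree $m$) yields, for $\Phi$ large enough, $\|\Psi_r\|^{2}_{L^{2}(\mathcal{C}^{\complement}(\xi_x,\Phi/\degAInCFive))}\leq\varepsilon'\|\Psi_r\|^{2}_{L^{2}(\Sphered)}$, uniformly in $r$ in the window and in $\|x\|>r_0$ (one uses $m\geq Kr_0$, so $\mathcal{C}(\xi_x,\varphi_0/m)\subseteq\mathcal{C}(\xi_x,\Phi/\degAInCFive)$ once $\Phi\gtrsim\varphi_0\kappa/r_0$). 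Integrating this over the radial window and using (i),
\[
\|\KLK(x,\cdot)\|^{2}_{L^{2}(A_{\mathrm{ang}})}\leq\varepsilon'\int_{\R^{d}}|\KLK(x,y)|^{2}\diffsymbol y=\varepsilon'\,\KLK(x,x)\leq\varepsilon'\,C_{d,\kappa,r_0}K^{d},
\]
which is $\leq\tfrac{\varepsilon}{2}K^{d}$ for $\varepsilon'$ small. Adding the radial and angular bounds gives \eqref{eqnC5:concentrationKLK}.

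\emph{Main obstacle.} The technical heart is the radial estimate, and within it the degrees $\degaInCFive$ with $\nu_\degaInCFive$ within $\bigo((Kr_x)^{1/3})$ of the turning point $Kr_x$: for those, $h_{\nu_\degaInCFive}(r_x,\cdot)$ does \emph{not} concentrate on the scale $1/K$ — its local wavelength near $r_x$ is only of order $K^{-2/3}$ — so radial localization must be given up and replaced by the quantitative fact that their aggregate contribution to $\KLK(x,x)$ is $\smallo(K^{d})$, while the intermediate ``near-turning'' band is controlled uniformly in $r_0<\|x\|<\infty$ at the cost of the $\delta_0^{3/2}$-loss that dictates choosing $\delta_0$ before $\Delta$. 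Supplying the uniform turning-point/modulus bounds for $J_\nu,J_\nu'$ that feed all of this is routine given the classical uniform asymptotics recalled in Section~\ref{sec:preliminary}; on the angular side the only genuinely delicate point (the behaviour near the antipode of $\xi_x$) is exactly what Lemma~\ref{lem:concentrationofSH} is designed to absorb.
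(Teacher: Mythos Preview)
Your route is genuinely different from the paper's, and much harder. The paper does \emph{not} analyse the Hankel kernels $h_{\nu_\degaInCFive}(r_x,r)$ at all. Its one-line observation is that on every sphere of radius $r_y$ one has
\[
\KLK(x,r_y\xi_y)=\mathrm{proj}_{\mathrm{Harm}_{\degAInCFive}(\Sphered)}\bigl\{\mathcal{K}_{\mathrm{PW}_K}(x,r_y\,\cdot)\bigr\}(\xi_y),
\]
so $\KLK(x,\cdot)$ is the spherical-harmonic truncation of the Paley--Wiener kernel. One then splits $\mathcal{K}_{\mathrm{PW}_K}(x,\cdot)=F_1+F_2$ with $F_1=\mathcal{K}_{\mathrm{PW}_K}(x,\cdot)\chi_{\B^d_{\Delta/K}(x)}$. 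The projection of $F_2$ has small $L^2(\R^d)$ norm because projections contract; the projection of $F_1$ is, on each sphere, the projection of a function supported in a cap of radius $\lesssim\kappa\Delta/(r_0\degAInCFive)$, to which Lemma~\ref{lem:concentrationofSH} applies directly at degree $\degAInCFive$. No Bessel asymptotics, no turning-point analysis, no degree splitting.

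Your radial analysis is plausible but heavy; the more serious problem is the angular step. You assert that $b_\degaInCFive(r)$ is, up to $\bigo((Kr_x)^{-1})$, independent of $\degaInCFive$ and then ramps down across $\nu_\degaInCFive\approx Kr_x$, so that $\Psi_r$ is close in $L^2(\Sphered)$ to a constant multiple of $\mathcal{K}_{\mathrm{Harm}_m}(\xi_x,\cdot)$. This requires uniform-in-$\nu$ control of the error in the leading asymptotic of $J_\nu$ over the whole range $0\leq\nu\leq(1-\delta_0)Kr_x$, and a quantitative bound on the transition band; neither is supplied, and the phrase ``the difference being controlled precisely because the coefficient sequence varies slowly in $\degaInCFive$'' is not a proof. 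Moreover, having approximated $\Psi_r$ by $c(r)\mathcal{K}_{\mathrm{Harm}_m}(\xi_x,\cdot)$, you still need to invoke Lemma~\ref{lem:concentrationofSH}: the lemma bounds $\|\mathrm{proj}\,\mu\|_{L^2(\mathcal{C}^{\complement})}$ by $\epsilon\|\mu\|_{L^2}$, not by $\epsilon\|\mathrm{proj}\,\mu\|_{L^2}$, so passing through a mollifier $\mu_{\varphi_0/m}$ forces you to show $\|\mu\|_{L^2}\lesssim\|\mathrm{proj}_{\mathrm{Harm}_m}\mu\|_{L^2}$ and that $\mathrm{proj}_{\mathrm{Harm}_m}(\delta_{\xi_x}-\mu)$ is small---two further estimates you only gesture at. Finally, the lemma applied at degree $m$ localises at scale $1/m$, whereas you need scale $1/\degAInCFive$; your parenthetical ``$\mathcal{C}(\xi_x,\varphi_0/m)\subseteq\mathcal{C}(\xi_x,\Phi/\degAInCFive)$'' addresses the support of $\mu$, not the conclusion, and the correct fix (use $\Phi'/m\leq(\Phi'\kappa/r_0)/\degAInCFive$ and take $\Phi=\Phi'\kappa/r_0$) should be stated clearly. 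All of this can probably be pushed through, but the paper's identity $\KLK=\mathrm{proj}_{\mathrm{Harm}_\degAInCFive}\mathcal{K}_{\mathrm{PW}_K}$ bypasses every one of these difficulties.
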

\begin{proof}
%	By denoting $F_K(x,y)$ the reproducing kernel of $\mathrm{PW}_{K}$ 
We use the closed-form expression for the reproducing kernel of $\mathrm{PW}_{K}$ (see, e.g., \cite[Lem. 12.2]{wendland05}):
\begin{align}\label{eqnC5:explicitKPWK}
\mathcal{K}_{\mathrm{PW}_{K}}(x,y)= \mathcal{F}^{-1} \left(\frac{\chi_{\BBCFour{K}}}{(2\pi)^{d/2}}\right)(x-y)=\frac{K^d }{(2\pi)^{d/2}} \frac{J_{d/2}(K\|x-y\|)}{(K\|x-y\|)^{d/2}}.
\end{align}
%\frac{\mathrm{vol}(\BB^d) }{(2\pi)^d}K^d \frac{2^{d/2}\substigamma(d/2+1)J_{d/2}(K\|x-y\|)}{(K\|x-y\|)^{d/2}}.
For fixed $x$, the restriction of $\KLK(x,\cdot)$ to the sphere of radius $r$ is the projection of $\mathcal{K}_{\mathrm{PW}_{K}}(x,r\,\cdot)$ onto $\mathrm{Harm}_{\degAInCFive}(\Sphered)$; that is,
\begin{align}
\KLK(x,r_y\xi_y) = \mathrm{proj}_{\mathrm{Harm}_{\degAInCFive}(\Sphered)} \{\mathcal{K}_{\mathrm{PW}_{K}}(x, r_y\cdot)\}(\xi_y).
\end{align}
This can be derived either from the Gegenbauer addition theorem for Bessel functions (cf. \cite[Chap. 11.42, Eq. (17)]{Watson}) or by treating $\spaceInCFive_{\degAInCFive,K}$ as the spherical harmonic truncation of $\mathrm{PW}_{K}$.

As follows directly from~\eqref{eqnC5:explicitKPWK}, $\mathcal{K}_{\mathrm{PW}_{K}}(x,\cdot)$ is obtained from $\mathcal{K}_{\mathrm{PW}_{1}}(0,\cdot)$ by dilation and translation. Set $E^{\langle d\rangle}:=\|\mathcal{K}_{\mathrm{PW}_{1}}(0,\cdot)\|^2_{L^2(\R^d)}$. Then $\|\mathcal{K}_{\mathrm{PW}_{K}}(x,\cdot)\|^2_{L^2(\R^d)}=K^dE^{\langle d\rangle}$. Moreover, for every $\varepsilon>0$, there exists $\Delta>0$, independent of $K$ and $x$, such that
\begin{align}\label{eqnC5:concentrationofKPWK}
\frac{\|\mathcal{K}_{\mathrm{PW}_{K}}(x,\cdot)\|^{2}_{L^2(\BBCFour{\nicefrac{\Delta}{K}}(x))}}{\|\mathcal{K}_{\mathrm{PW}_{K}}(x,\cdot)\|^{2}_{L^2(\R^d)}} \geq 1- \frac{\varepsilon}{4  E^{\langle d \rangle}},
\end{align}
where $\B^{d}_{\nicefrac{\Delta}{K}}(x)$ denotes the closed ball centered at $x$ with radius $\nicefrac{\Delta}{K}$. For fixed $x$, define
\begin{align*}
F_1(\cdot)=\mathcal{K}_{\mathrm{PW}_{K}}(x,\cdot)\chi_{\BBCFour{\nicefrac{\Delta}{K}}(x)}&&\text{and}&& F_2(\cdot)=\mathcal{K}_{\mathrm{PW}_{K}}(x,\cdot)-F_1(\cdot),
\end{align*}
Then~\eqref{eqnC5:concentrationofKPWK} gives $\|F_1\|^{2}_{L^2(\R^d)}\leq K^dE^{\langle d\rangle}$ and $\|F_2\|^{2}_{L^2(\R^d)}\leq\frac{\varepsilon}{4}K^d$. Accordingly, we decompose $\KLK(x,\cdot)$ as
\begin{align}
\nonumber	\KLK(x,r_y\xi_y)& = \mathrm{proj}_{\mathrm{Harm}_{\degAInCFive}(\Sphered)} \{F_1( r_y\cdot)\}(\xi_y) + \mathrm{proj}_{\mathrm{Harm}_{\degAInCFive}(\Sphered)} \{F_2( r_y\cdot)\}(\xi_y) \\
&=:\mathcal{K}_1(r_y\xi_y)+\mathcal{K}_2(r_y\xi_y).
\end{align}
Since orthogonal projection decreases the $L^2$ norm, we can control the $L^2$ norm of $\mathcal{K}_2$ by $F_2$ as follows:
%\begin{align*}
%	\int_{\Sphere} |\KLK(x,r\xi)|^2 \diffsymbol \omega(\xi) \leq \int_{\Sphere} |F_K(x; r\xi)|^2 \diffsymbol \omega(\xi)
%\end{align*}
\begin{align}\label{eqnC5:estimationK1}
\|\mathcal{K}_2\|_{L^2(\R^d)}^2&=\int_{0}^{\infty}r^{d-1}\int_{\Sphered}\left|\mathrm{proj}_{\mathrm{Harm}_{\degAInCFive}(\Sphered)} \{F_2( r\cdot)\}(\xi)\right|^2 \diffsymbol \omega(\xi) \diffsymbol r\\
\nonumber	&\leq\int_{0}^{\infty}r^{d-1}\int_{\Sphered}\left|\{F_2( r\cdot)\}(\xi)\right|^2 \diffsymbol \omega(\xi) \diffsymbol r \leq \|F_2\|_{L^2(\R^d)}^2\leq \frac{\varepsilon}{4}K^d.
\end{align}

For sufficiently large $K$, the support of $F_1$ satisfies
\[
\operatorname{ess\,supp}F_1
\subset
\mathcal U\left(
r_x\xi_x;\frac{\Delta}{K},
\frac{\varphi_*}{\degAInCFive}
\right),
\qquad
\varphi_*:=\frac{2\kappa\Delta}{r_0},
\]
uniformly for $\|x\|\geq r_0$. Indeed,
\[
\degAInCFive\arcsin\frac{\Delta}{K\|x\|}
\leq\varphi_*
\]
for all sufficiently large $\degAInCFive$. Lemma~\ref{lem:concentrationofSH}, applied with the fixed parameter $\varphi_*$ and error $\varepsilon/(4E^{\langle d\rangle})$ (after decreasing the error if necessary), therefore yields a constant $\Phi$ independent of $x$, $\degAInCFive$, and $K$.

\small
\begin{align}\label{eqnC5:estimationK2}
\|\mathcal{K}_1\|_{L^2(\mathcal{U}^{\complement}(r_x\xi_x;\frac{\Delta}{K},\frac{\Phi}{\degAInCFive}))}^2
&=
\int_{r_x-\Delta/K}^{r_x+\Delta/K}
r^{d-1}
\int_{\mathcal{C}^{\complement}(\xi_x,\frac{\Phi}{\degAInCFive})}
\left|
\mathrm{proj}_{\mathrm{Harm}_{\degAInCFive}(\Sphered)}
\{F_1(r\cdot)\}(\xi)
\right|^2
\diffsymbol\omega(\xi)\diffsymbol r\\ \nonumber
&\leq
\frac{\varepsilon}{4E^{\langle d\rangle}}
\int_{r_x-\Delta/K}^{r_x+\Delta/K}
r^{d-1}\|F_1(r\cdot)\|_{L^2(\Sphered)}^2
\diffsymbol r
\leq
\frac{\varepsilon}{4E^{\langle d\rangle}}
\|F_1\|_{L^2(\R^d)}^2
\leq
\frac{\varepsilon}{4}K^d.
\end{align}

Together with~\eqref{eqnC5:estimationK1}, this gives
\[
\|\KLK(x,\cdot)\|_{L^2(\mathcal U^\complement)}^2
\leq
2\|\mathcal K_1\|_{L^2(\mathcal U^\complement)}^2
+2\|\mathcal K_2\|_{L^2(\mathcal U^\complement)}^2
\leq\varepsilon K^d.
\]
This proves~\eqref{eqnC5:concentrationKLK}. The constants $\Delta$ and $\Phi$ depend only on $\kappa$, $\varepsilon$, $r_0$, and $d$, so the estimate is uniform for $\|x\|\geq r_0$.
\end{proof}
\begin{prop} Let $D$, $\kappa$, $\{\degAInCFive_j\}$, and $\{K_j\}$ satisfy the assumptions of Theorem~\ref{thm:eigendistri}, and abbreviate $\lambdaInCFive_i(D;\degAInCFive_j,K_j)$ by $\lambdaInCFive_i^{(j)}$. Then
\begin{align}\label{eqnC5:trace}
\lim_{j\to\infty} \frac{\sum_{i=1}^{\infty} \lambdaInCFive_i^{(j)}}{K_j^d}= \int_{D} {\functionInCFiveFB}^{\langle d \rangle}_{\langle \kappa \rangle}(\|x\|)\diffsymbol x,
\end{align}
and
\begin{align}\label{eqnC5:HSnorm}
\lim_{j\to\infty} \frac{\sum_{i=1}^{\infty} (\lambdaInCFive_i^{(j)})^{2}}{K_j^d}= \int_{D} {\functionInCFiveFB}^{\langle d \rangle}_{\langle \kappa \rangle}(\|x\|)\diffsymbol x.
\end{align}
\end{prop}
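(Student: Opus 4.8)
The plan is to express both $\sum_i\lambdaInCFive_i$ and $\sum_i\lambdaInCFive_i^2$ in terms of the kernel $\KLK$, to reduce $\sum_i\lambdaInCFive_i$ to the diagonal asymptotics already obtained in Section~\ref{secC5:sec2}, and then to show that the difference $\sum_i\lambdaInCFive_i-\sum_i\lambdaInCFive_i^2$ is only an off-diagonal ``leakage'' term that is $\smallo(K^d)$, using the energy concentration estimate of Proposition~\ref{prop:concentrationofK\degAInCFive, K}. Concretely, since $D$ has finite measure the operators $\sprojection_D\bprojectionInCFive_{\degAInCFive, K}$ and $\sprojection_D\bprojectionInCFive_{\degAInCFive, K}\sprojection_D$ are Hilbert--Schmidt, with kernels $\chi_D(x)\KLK(x,y)$ and $\chi_D(x)\KLK(x,y)\chi_D(y)$ respectively, and since $\bprojectionInCFive_{\degAInCFive, K}$ is an orthogonal projection, $\sprojection_D\bprojectionInCFive_{\degAInCFive, K}\sprojection_D=(\sprojection_D\bprojectionInCFive_{\degAInCFive, K})(\sprojection_D\bprojectionInCFive_{\degAInCFive, K})^{\ast}$ is positive and trace class. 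Using $\mathrm{tr}(AA^{\ast})=\|A\|_{\mathrm{HS}}^2$, $\mathrm{tr}(B^2)=\|B\|_{\mathrm{HS}}^2$ for self-adjoint $B$, and the reproducing identity $\int_{\R^d}|\KLK(x,y)|^2\diffsymbol y=\KLK(x,x)$, one gets
\[ \sum_i\lambdaInCFive_i = \int_D\KLK(x,x)\,\diffsymbol x,\qquad \sum_i\lambdaInCFive_i^2 = \int_D\int_D|\KLK(x,y)|^2\,\diffsymbol y\,\diffsymbol x, \]
and hence
\[ \sum_i\lambdaInCFive_i-\sum_i\lambdaInCFive_i^2 = \int_D\int_{\R^d\setminus D}|\KLK(x,y)|^2\,\diffsymbol y\,\diffsymbol x =: \mathrm{Leak}(\degAInCFive,K). \]

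For \eqref{eqnC5:trace} I would divide $\int_D\KLK(x,x)\,\diffsymbol x$ by $K^d$ and pass to the limit by dominated convergence. Theorems~\ref{thm:characterizeKLKdim2} and~\ref{thm:convergenceofshbKtoW} give $\KLK(x,x)/K^d\to{\functionInCFiveFB}^{\langle d \rangle}_{\langle \kappa \rangle}(\|x\|)$ for every $x$ with $\|x\|>0$, the exceptional point being Lebesgue-null. For the dominating function, combine \eqref{eqnC5:diagonalK} with the Abel summation in \eqref{eqnC5:analysiDimd_1}, the bound $\sum_{\degaInCFive=0}^{\degAInCFive}J^2_{v_0+\degaInCFive}(s)\le1$ from Remark~\ref{remC5:remarkofKeyequation}, and the monotonicity of $\mathrm{dim}(H_{\degaInCFive}^d)$ in $\degaInCFive$: the inner sum in \eqref{eqnC5:diagonalK} is $\le\mathrm{dim}(H_{\degAInCFive}^d)$, so with $\degAInCFive=\kappa K$ one obtains $\KLK(x,x)\le C_\kappa\|x\|^{2-d}K^d$ for $K$ large, uniformly in $x$. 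Since $\|x\|^{2-d}$ is locally integrable on $\R^d$ and $D$ is bounded, $\|x\|^{2-d}\in L^1(D)$, so dominated convergence yields \eqref{eqnC5:trace}.

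For \eqref{eqnC5:HSnorm} it then suffices, by the identities above and \eqref{eqnC5:trace}, to prove $\mathrm{Leak}(\degAInCFive,K)=\smallo(K^d)$. I would split $D$ by proximity to the origin and to $\partial D$. Fix $r_0>0$. On $D\cap\BBCFour{r_0}$ use $\int_{\R^d\setminus D}|\KLK(x,y)|^2\diffsymbol y\le\KLK(x,x)\le C_\kappa\|x\|^{2-d}K^d$ and integrate to bound the contribution by $C_\kappa'\,r_0^2\,K^d$. On $D\setminus\BBCFour{r_0}$, observe that when $\degAInCFive=\kappa K$ and $\|x\|$ stays bounded the tessroid $\mathcal{U}(r_x\xi_x;\tfrac{\Delta}{K},\tfrac{\Phi}{\degAInCFive})$ has Euclidean diameter $\le C_1/K$ with $C_1=C_1(\kappa,\Delta,\Phi,D)$; therefore, for $x$ with $\mathrm{dist}(x,\partial D)>C_1/K$, this tessroid lies inside $D$, so Proposition~\ref{prop:concentrationofK\degAInCFive, K} (with $r_0$ as the lower bound for $\|x\|$) gives $\int_{\R^d\setminus D}|\KLK(x,y)|^2\diffsymbol y\le\int_{\mathcal{U}^{\complement}}|\KLK(x,y)|^2\diffsymbol y\le\varepsilon K^d$, contributing $\le\varepsilon\,|D|\,K^d$; the remaining set $\{x\in D\setminus\BBCFour{r_0}:\mathrm{dist}(x,\partial D)\le C_1/K\}$ has measure $\bigo(1/K)$ since $\partial D$ is Lipschitz, and there $\int_{\R^d\setminus D}|\KLK(x,y)|^2\diffsymbol y\le\KLK(x,x)\le C_\kappa r_0^{2-d}K^d$, contributing $\bigo(K^{d-1})$. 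Altogether $\limsup_{\degAInCFive,K\to\infty,\,\degAInCFive/K=\kappa}\mathrm{Leak}(\degAInCFive,K)/K^d\le C_\kappa'\,r_0^2+\varepsilon\,|D|$; letting first $\varepsilon\to0$ and then $r_0\to0$ gives $\mathrm{Leak}(\degAInCFive,K)=\smallo(K^d)$, and \eqref{eqnC5:HSnorm} follows.

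The main obstacle is this leakage estimate: one must verify carefully that the concentration tessroid of Proposition~\ref{prop:concentrationofK\degAInCFive, K} genuinely shrinks into the interior of $D$ at a uniform rate — which is precisely where the coupling $\degAInCFive=\kappa K$ is used, to balance the radial scale $1/K$ against the angular scale $1/\degAInCFive$ — while simultaneously controlling the two regions where the proposition is not available, namely the boundary layer of $\partial D$ (handled by the Lipschitz regularity) and a neighbourhood of the origin (handled by the crude bound $\KLK(x,x)=\bigo(\|x\|^{2-d}K^d)$ and the local integrability of $\|x\|^{2-d}$). Everything else is routine Hilbert--Schmidt calculus together with the diagonal asymptotics of Section~\ref{secC5:sec2}.
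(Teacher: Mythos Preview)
Your proposal is correct and follows essentially the same route as the paper: express $\sum_i\lambdaInCFive_i$ and $\sum_i\lambdaInCFive_i^2$ as $\int_D\KLK(x,x)\,\diffsymbol x$ and $\int_D\int_D|\KLK(x,y)|^2\,\diffsymbol x\,\diffsymbol y$, pass to the limit in the trace via the diagonal asymptotics of Section~\ref{secC5:sec2}, and control the leakage $\int_D\int_{D^{\complement}}|\KLK|^2$ by splitting $D$ into an interior part (handled by Proposition~\ref{prop:concentrationofK\degAInCFive, K}), a boundary layer (of measure $\bigo(K^{-1})$ by Lipschitz regularity), and a neighbourhood of the origin, then sending the auxiliary parameters to zero. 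Your write-up is in fact more explicit than the paper's in two places: you justify the dominated convergence for the trace via the Abel-summation bound $\KLK(x,x)\le C_\kappa\|x\|^{2-d}K^d$ (the paper simply invokes Theorem~\ref{thm:convergenceofshbKtoW}), and near the origin you integrate this bound to get $C_\kappa' r_0^2 K^d$, whereas the paper uses the cruder $\mathrm{vol}(D_b)K^d$ coming from the uniform bound $\KLK(x,x)\le\mathcal{K}_{\mathrm{PW}_K}(x,x)=E^{\langle d\rangle}K^d$.
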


\begin{proof} We first treat the exact relation $\degAInCFive/K=\kappa$. We write $\sprojection\bprojectionInCFive\sprojection$ as an integral operator:
\begin{align}
\sprojection_{D}\bprojectionInCFive_{\degAInCFive, K}\sprojection_{D} f(x) &=\chi_{D}(x) \int_{\R^d} f(y)\chi_{D}(y)\KLK(x,y)\diffsymbol y\\ 
\nonumber		&=\int_{\R^d}\left[\chi_{D}(x)\chi_{D}(y) \KLK (x,y)\right] f(y)\diffsymbol y
\end{align}
Thus the integral kernel of $\sprojection\bprojectionInCFive\sprojection$ is $\chi_D(x)\chi_D(y)\KLK(x,y)$.
This gives the following trace and Hilbert-Schmidt norm identities (cf. \cite[Chap. VI.6]{reed1981functional}):
\begin{align*}
	\mathrm{tr}(\sprojection\bprojectionInCFive\sprojection)=	\sum_{i=1}^{\infty} \lambdaInCFive_i = \int_{D}  \KLK(x,x)\diffsymbol x \,\text{,}\quad \|\sprojection\bprojectionInCFive\sprojection\|^2_{\HS}=\sum_{i=1}^{\infty} \lambdaInCFive_i^2 = \int_{D}\int_{D} | \KLK(x,y)|^2\diffsymbol x \diffsymbol y. 
\end{align*}
By Theorem \ref{thm:convergenceofshbKtoW}, we immediately obtain~\eqref{eqnC5:trace}. We can bound the right-hand side of~\eqref{eqnC5:HSnorm} by
\begin{align}\label{eqnC5:HSnormUpperES}
\int_{D}\int_{D} | \KLK(x,y)|^2\diffsymbol x \diffsymbol y &\leq \int_{D}\int_{\R^d} | \KLK(x,y)|^2\diffsymbol x \diffsymbol y \\
\nonumber &= \int_{D}  \KLK(x,x)\diffsymbol x = K^d \int_{D} {\functionInCFiveFB}^{\langle d \rangle}_{\langle \kappa \rangle}(\|x\|)\diffsymbol x +\smallo(K^d).
\end{align}
The core of the proof is the lower bound for $\int_{D}\int_{D} | \KLK(x,y)|^2\diffsymbol x \diffsymbol y$. Here we use the concentration property of $\KLK$ near $x=y$ from Proposition \ref{prop:concentrationofK\degAInCFive, K} as follows.

Fix $\varepsilon>0$ and decompose $D=D_i\cup D_b$, where $D_i$ and $D_b$ denote the interior and boundary regions, respectively:
\begin{align*}
D_i=\{x\in D: \|x\|>r_0, \mathcal{U}(r_x\xi_x;\frac{\Delta}{K},\frac{\Phi}{\degAInCFive})\subset D\} &&\text{and}&& D_b=D\setminus D_i.
\end{align*}
Here $r_0>0$ is fixed, and $\Delta,\Phi$ are given by Proposition \ref{prop:concentrationofK\degAInCFive, K}. Since $D$ is a Lipschitz domain, it satisfies an interior cone condition. Therefore, by \cite[Lem. 3.7]{wendland05}, we have $\mathrm{vol}(D_b)\leq \mathrm{vol}(\B^d_{r_0})+\bigo(K^{-1})$, and hence
\begin{align}
\int_{D}\int_{D^{\complement}} | \KLK(x,y)|^2\diffsymbol x \diffsymbol y&  \leq 	\int_{D_b}\left[\int_{D^{\complement}} | \KLK(x,y)|^2\diffsymbol x\right] \diffsymbol y+\int_{D_i}\left[\int_{D^{\complement}} | \KLK(x,y)|^2\diffsymbol x\right] \diffsymbol y\\
\nonumber		&\leq \mathrm{vol}(D_b) K^d+\mathrm{vol}(D_i) \varepsilon K^d,
%&\leq \mathrm{vol}(D_o)\int_{\R^d}| \KLK(x,y)|^2\diffsymbol x \diffsymbol y +\mathrm{vol}(D_i) \int_{ \mathcal{U}^{\complement}(r_x,\xi_x;\Delta/K,\Phi/L)}\KLK(x,y)|^2\diffsymbol x \diffsymbol y\\
%&=\frac{\varepsilon}{1}K^d
\end{align}
This leads to
\begin{align}\label{eqnC5:HSnormLowerES}
\nonumber& \lim_{\degAInCFive,K\to \infty,\frac{\degAInCFive}{K}=\kappa}\frac{	\int_{D}\int_{D} | \KLK(x,y)|^2\diffsymbol x \diffsymbol y}{K^d}  \\ \nonumber
&=  \lim_{\degAInCFive,K\to \infty,\frac{\degAInCFive}{K}=\kappa}\frac{1}{K^d}\left[\int_{D} \KLK(x,x)\diffsymbol x  -\int_{D}\int_{D^{\complement}} | \KLK(x,y)|^2\diffsymbol x \diffsymbol y\right]  \\ \nonumber
&\geq   \lim_{K\to\infty}\frac{1}{K^d}\left[K^d \int_{D} {\functionInCFiveFB}^{\langle d \rangle}_{\langle \kappa \rangle}(\|x\|)\diffsymbol x  -\varepsilon \mathrm{vol}(D)K^d -\smallo(K^d)-\mathrm{vol}(\B^d_{r_0})K^d\right]\\ 
&\geq \int_{D} {\functionInCFiveFB}^{\langle d \rangle}_{\langle \kappa \rangle}(\|x\|)\diffsymbol x -\varepsilon \mathrm{vol}(D)-\mathrm{vol}(\B^d_{r_0}).
%	&\geq  \int_{D} {\functionInCFiveFB}^{\langle d \rangle}_{\langle \kappa \rangle}(x)\diffsymbol x K^d -(r_0^d\mathrm{vol}(\BB^d)+\varepsilon) \mathrm{vol}(D)  K^d -\smallo(K^d)
\end{align}
The desired equality~\eqref{eqnC5:HSnorm} then follows from~\eqref{eqnC5:HSnormUpperES} and~\eqref{eqnC5:HSnormLowerES} by letting $\varepsilon,r_0\to 0$.

The general case follows because the concentration estimate is uniform when $\degAInCFive/K$ remains in a compact subset of $(0,\infty)$, while the diagonal-kernel limit already holds for arbitrary sequences.
\end{proof}

\begin{proof}[Proof of Theorem \ref{thm:eigendistri}] Along the sequence in the theorem, we suppress the index $j$. We abbreviate $\lambdaInCFive_i(D;\degAInCFive,K)$ by $\lambdaInCFive_i$. The estimates~\eqref{eqnC5:trace} and~\eqref{eqnC5:HSnorm} give
\begin{align}
\Delta_{K}:=\sum_{i=1}^{\infty} \left(\lambdaInCFive_i-\lambdaInCFive_i^{2}\right)= \sum_{i=1}^{\infty} \lambdaInCFive_i-\sum_{i=1}^{\infty} \lambdaInCFive_i^2 =\smallo(K^d).
\end{align}
Since every $\lambdaInCFive_i$ lies in $[0,1]$, and $h(t):=t-t^2=t(1-t)$ is non-negative on $[0,1]$ with minimum $\epsilon(1-\epsilon)$ on $[\epsilon,1-\epsilon]$,
\begin{align}
\sharp\{\lambdaInCFive_i:\epsilon \leq \lambdaInCFive_i\leq 1-\epsilon\} \leq \frac{1}{\epsilon(1-\epsilon)}\sum_{i=1}^{\infty} \left(\lambdaInCFive_i-\lambdaInCFive_i^{2}\right) =\smallo(K^d),
\end{align}
which yields~\eqref{eqnC5:eigendistri2}.

To prove~\eqref{eqnC5:eigendistri1}, note that $h(t)\geq t/2$ for $0<t<1/2$. Hence
\begin{align}
\sum_{i\in \N;\lambdaInCFive_i\leq \nicefrac{1}{2}} \lambdaInCFive_i \leq 2 	\sum_{i=1}^{\infty} \left(\lambdaInCFive_i-\lambdaInCFive_i^{2}\right) =2\Delta_{K}=\smallo(K^d).
\end{align}
Choose $0<\delta<\epsilon<1/2$, and let $A_{K,\delta}$ and $B_{K,\delta}$ denote the numbers of eigenvalues in $(1-\delta,1]$ and $(1/2,1-\delta]$, respectively. Then
\begin{align}
(1-\delta) A_{K,\delta} + \frac{B_{K,\delta}}{2} +\sum_{i\in \N; \lambdaInCFive_i\leq 1/2}\lambdaInCFive_i \leq \sum_{i=1}^{\infty} \lambdaInCFive_i \leq A_{K,\delta} +B_{K,\delta}+\sum_{i\in \N; \lambdaInCFive_i\leq 1/2} \lambdaInCFive_i.
\end{align}
By the previous analysis, $B_{K,\delta}$ and $\sum_{i\in \N;\lambdaInCFive_i\leq 1/2}\lambdaInCFive_i$ are controlled by $2\delta^{-1}\Delta_{K}$ and $2\Delta_{K}$, respectively. Thus we have the chain of inequalities
\begin{align}\label{eqnC5:control_chain_trace}
\sum_{i=1}^{\infty}\lambdaInCFive_i- (2\delta^{-1}+2)\Delta_{K} \leq A_{K,\delta} &\leq \sharp\{\lambdaInCFive_i: 1-\epsilon <\lambdaInCFive_i\leq 1\} \\ \nonumber
&\leq A_{K,\delta}+B_{K,\delta} \leq \frac{1}{(1-\delta)} [\sum_{i=1}^{\infty} \lambdaInCFive_i +\delta^{-1}\Delta_{K}].
\end{align} 
Dividing all terms in~\eqref{eqnC5:control_chain_trace} by $K^d$ and choosing $\delta\to 0$ so that $\frac{\Delta_{K}}{\delta K^d}\to 0$ (which is possible because $\Delta_{K}=\smallo(K^d)$) proves~\eqref{eqnC5:eigendistri1}.

\end{proof}

\section{Radial placement and SFB concentration spectra}\label{secC5:radial-nystrom}

The numerical experiments in this section are intended only to illustrate the asymptotic eigenvalue distribution and the radial dependence of the Shannon coefficient. They are not used in the proofs of the main results. To illustrate the spectral consequence of Theorem~\ref{thm:eigendistri} within the full-space SFB model, we restrict to localization domains that depend only on the radial variable. Let $\mathcal{I}\subset\R_+$ be a bounded measurable set, for instance an interval $[a,b]$ or a finite union of such intervals, and set
\begin{align}
D_{\mathcal{I}}:=\{x\in\R^d:\|x\|\in\mathcal{I}\}.
\end{align}
We discretize the corresponding SFB concentration operator directly, using the truncation space $\spaceInCFive_{\degAInCFive,K}$ from Sections~\ref{secC5:sec2} and~\ref{secC5:SlepianProblem}. For domains of the form $D_{\mathcal I}$, the angular variables remain diagonal, and the SFB concentration problem reduces to a family of one-dimensional Hankel-type concentration operators, one for each spherical harmonic degree. The numerical goal is not to approximate individual Slepian eigenfunctions, but to compare the observed eigenvalue plunge with the leading Shannon-number prediction and to isolate the effect of radial placement.

In Section~\ref{secC5:SlepianProblem}, the Slepian eigenvalues are defined through the compact operator
\begin{align}
\sprojection_{D_{\mathcal{I}}}\bprojectionInCFive_{\degAInCFive,K}\sprojection_{D_{\mathcal{I}}}.
\end{align}
For computation it is more convenient to work on the band-limited side with
\begin{align}\label{eq:app-bandlimited-concentration}
\bprojectionInCFive_{\degAInCFive,K}\sprojection_{D_{\mathcal{I}}}\bprojectionInCFive_{\degAInCFive,K}:\spaceInCFive_{\degAInCFive,K}\to\spaceInCFive_{\degAInCFive,K}.
\end{align}
The two operators have the same non-zero eigenvalues. For $f\in\spaceInCFive_{\degAInCFive,K}$, the Rayleigh quotient of~\eqref{eq:app-bandlimited-concentration} is exactly the spatial concentration ratio
\begin{align}\label{eq:app-rayleigh-concentration}
\frac{\langle \bprojectionInCFive_{\degAInCFive,K}\sprojection_{D_{\mathcal{I}}}\bprojectionInCFive_{\degAInCFive,K}f,f\rangle_{L^2(\R^d)}}{\|f\|^2_{L^2(\R^d)}}
=
\frac{\int_{D_{\mathcal{I}}}|f(x)|^2\diffsymbol x}{\int_{\R^d}|f(x)|^2\diffsymbol x}.
\end{align}
Thus the eigenvalues measure the fraction of energy of the corresponding SFB band-limited eigenfunctions contained in $D_{\mathcal{I}}$.

For a fixed spherical harmonic degree $\degaInCFive$, put
\begin{align}
\nu_{\degaInCFive}:=\degaInCFive+\frac{d-2}{2}.
\end{align}
If $F\in L^2((0,K),k\diffsymbol k)$, then the radial SFB component associated with the angular mode $Y_{\degaInCFive,\degb}$ can be written as
\begin{align}\label{eq:app-hankel-parametrization}
 g_F(r)=\int_0^K F(k)J_{\nu_{\degaInCFive}}(kr)k\diffsymbol k, \qquad
 f_F(r\xi)=r^{\frac{2-d}{2}}g_F(r)Y_{\degaInCFive,\degb}(\xi),
\end{align}
where $Y_{\degaInCFive,\degb}$ is normalized in $L^2(\Sphered)$. With the Hankel-transform normalization used in Section~\ref{sec:preliminary}, this gives a unitary correspondence on each fixed angular mode:
\begin{align}\label{eq:app-unitary-norm}
\|f_F\|^2_{L^2(\R^d)}
=
\int_0^{\infty}|g_F(r)|^2r\diffsymbol r
=
\int_0^K |F(k)|^2 k\diffsymbol k.
\end{align}
Here the factor $r^{(2-d)/2}$ in~\eqref{eq:app-hankel-parametrization} converts the $d$-dimensional volume element $r^{d-1}\diffsymbol r\diffsymbol\omega$ into the Hankel measure $r\diffsymbol r$ after angular integration.

Since $D_{\mathcal{I}}$ is radial, multiplication by $\chi_{D_{\mathcal{I}}}$ does not mix different spherical harmonic degrees or different basis elements within a fixed degree. Indeed,
\begin{align}
\int_{\Sphered}Y_{\degaInCFive,\degb}(\xi)\overline{Y_{\degaInCFive',\degb'}(\xi)}\diffsymbol\omega(\xi)
=\delta_{\degaInCFive\degaInCFive'}\delta_{\degb\degb'}.
\end{align}
Consequently, the operator~\eqref{eq:app-bandlimited-concentration} is block diagonal with respect to the angular indices $(\degaInCFive,\degb)$. For a fixed angular mode, the energy inside $D_{\mathcal{I}}$ is
\begin{align}\label{eq:app-local-energy}
\int_{D_{\mathcal{I}}}|f_F(x)|^2\diffsymbol x
&=\int_{\mathcal{I}}|g_F(r)|^2r\diffsymbol r \nonumber\\
&=\int_0^K\int_0^K
A_{\nu_{\degaInCFive},\mathcal{I}}(k,k')F(k')\overline{F(k)}\,k'\diffsymbol k'\,k\diffsymbol k,
\end{align}
where
\begin{align}\label{eq:app-radial-kernel}
A_{\nu,\mathcal{I}}(k,k'):=\int_{\mathcal{I}}J_{\nu}(kr)J_{\nu}(k'r)r\diffsymbol r.
\end{align}
Therefore, by the Riesz representation of the quadratic form~\eqref{eq:app-local-energy} in the spectral Hilbert space $L^2((0,K),k\diffsymbol k)$, the radial block of the concentration operator is the self-adjoint integral operator
\begin{align}\label{eq:app-hankel-block}
(T_{\nu_{\degaInCFive},K,\mathcal{I}}F)(k)
=
\int_0^K A_{\nu_{\degaInCFive},\mathcal{I}}(k,k')F(k')k'\diffsymbol k'.
\end{align}
Its Rayleigh quotient is precisely the concentration ratio on the angular mode $(\degaInCFive,\degb)$:
\begin{align}\label{eq:app-rayleigh-block}
\frac{\langle T_{\nu_{\degaInCFive},K,\mathcal{I}}F,F\rangle_{L^2((0,K),k\diffsymbol k)}}{\|F\|^2_{L^2((0,K),k\diffsymbol k)}}
=
\frac{\int_{D_{\mathcal{I}}}|f_F(x)|^2\diffsymbol x}{\int_{\R^d}|f_F(x)|^2\diffsymbol x}.
\end{align}
It follows that the eigenvalues of $T_{\nu_{\degaInCFive},K,\mathcal{I}}$ are the Slepian concentration eigenvalues on the fixed angular mode $(\degaInCFive,\degb)$. Since the same radial block occurs for every spherical harmonic of degree $\degaInCFive$, each such eigenvalue is counted with multiplicity $\mathrm{dim}(H_{\degaInCFive}^{d})$ in the full SFB concentration spectrum. Thus the non-zero spectrum of the full SFB concentration operator is obtained by taking the spectra of the blocks~\eqref{eq:app-hankel-block} for $0\leq\degaInCFive\leq\degAInCFive$, and repeating each eigenvalue with multiplicity $\mathrm{dim}(H_{\degaInCFive}^{d})$.

For numerical purposes, choose quadrature nodes $k_1,\ldots,k_m\in(0,K)$ and positive weights $q_1,\ldots,q_m$ for the measure $k\diffsymbol k$, so that
\begin{align}\label{eq:app-quadrature-k}
\int_0^K h(k)k\diffsymbol k\simeq \sum_{i=1}^m q_i h(k_i).
\end{align}
For example, here we take a Gauss--Legendre rule for the Lebesgue measure $\diffsymbol k$ on $[0,K]$ and absorb the factor $k_i$ into the weights. Discretizing the eigenvalue equation
\begin{align}
T_{\nu_{\degaInCFive},K,\mathcal{I}}F=\lambdaInCFive F
\end{align}
at the nodes $k_i$ gives the nodal system
\begin{align}\label{eq:app-nodal-eigenproblem}
\sum_{j=1}^m A_{\nu_{\degaInCFive},\mathcal{I}}(k_i,k_j)F(k_j)q_j
=\lambdaInCFive F(k_i),
\qquad 1\leq i\leq m.
\end{align}
This is the direct Nystr\"om discretization of~\eqref{eq:app-hankel-block}. However, because the underlying operator is self-adjoint with respect to the weighted inner product in $L^2((0,K),k\diffsymbol k)$, it is preferable to convert~\eqref{eq:app-nodal-eigenproblem} into an ordinary symmetric matrix eigenvalue problem. Setting
\begin{align}
u_i=\sqrt{q_i}F(k_i)
\end{align}
turns~\eqref{eq:app-nodal-eigenproblem} into
\begin{align}\label{eq:app-symmetric-eigenproblem}
\sum_{j=1}^m
\sqrt{q_i}\,A_{\nu_{\degaInCFive},\mathcal{I}}(k_i,k_j)\sqrt{q_j}\,u_j
=\lambdaInCFive u_i.
\end{align}
Accordingly, the symmetric Nystr\"om matrix for the $\degaInCFive$-th block is
\begin{align}\label{eq:app-nystrom-matrix}
M^{(\degaInCFive)}_{ij}
=
\sqrt{q_i}\,A_{\nu_{\degaInCFive},\mathcal{I}}(k_i,k_j)\sqrt{q_j},
\qquad 1\leq i,j\leq m.
\end{align}
The eigenvalues of $M^{(\degaInCFive)}$ approximate the eigenvalues of the integral operator $T_{\nu_{\degaInCFive},K,\mathcal{I}}$, and hence the Slepian concentration eigenvalues of the SFB problem on the angular block of degree $\degaInCFive$. Collecting these eigenvalues for $0\leq\degaInCFive\leq\degAInCFive$ and repeating them $\mathrm{dim}(H_{\degaInCFive}^{d})$ times gives a finite-dimensional approximation to the SFB Slepian eigenvalue distribution.

When $\mathcal{I}=[a,b]$, the radial kernel~\eqref{eq:app-radial-kernel} may be computed either by a one-dimensional quadrature in $r$ or by the standard Bessel integral formula
\begin{align}\label{eq:app-bessel-integral-offdiag}
A_{\nu,[a,b]}(s,t)=\left[\frac{r}{s^2-t^2}\big(sJ_{\nu+1}(sr)J_{\nu}(tr)-tJ_{\nu}(sr)J_{\nu+1}(tr)\big)\right]_{r=a}^{r=b},\qquad s\neq t,
\end{align}
and, on the diagonal,
\begin{align}\label{eq:app-bessel-integral-diag}
A_{\nu,[a,b]}(s,s)=\left[\frac{r^2}{2}\big(J_{\nu}(sr)^2-J_{\nu-1}(sr)J_{\nu+1}(sr)\big)\right]_{r=a}^{r=b}.
\end{align}
If an endpoint is $r=0$, the corresponding boundary term is understood in the limiting sense. For nearly equal nodes $s$ and $t$, it is numerically safer to use~\eqref{eq:app-bessel-integral-diag} or a direct quadrature in~\eqref{eq:app-radial-kernel}, rather than the off-diagonal formula~\eqref{eq:app-bessel-integral-offdiag}.

The discretization above is particularly useful for illustrating the radial-placement effect predicted by Theorem~\ref{thm:eigendistri}. For a radial localization set $D_{\mathcal{I}}$, the leading coefficient in Theorem~\ref{thm:eigendistri} is
\begin{align}\label{eq:app-radial-mu}
\mu_\kappa(D_{\mathcal{I}}):=\int_{D_{\mathcal{I}}}{\functionInCFiveFB}^{\langle d\rangle}_{\langle\kappa\rangle}(\|x\|)\diffsymbol x =\mathrm{vol}(\Sphered)\int_{\mathcal{I}}{\functionInCFiveFB}^{\langle d\rangle}_{\langle\kappa\rangle}(r)r^{d-1}\diffsymbol r.
\end{align}
Thus the predicted plunge location of the sorted Slepian eigenvalues is approximately
\begin{align}
K^d\mu_\kappa(D_{\mathcal{I}}),
\end{align}
up to lower-order terms. In the classical translation-invariant Paley--Wiener setting this coefficient is a constant multiple of $|D_{\mathcal{I}}|$, and therefore depends only on the Euclidean volume. In the present SFB truncation setting, however, \eqref{eq:app-radial-mu} depends on the radial placement of $\mathcal{I}$ through the non-constant density ${\functionInCFiveFB}^{\langle d\rangle}_{\langle\kappa\rangle}$.

\subsection{Numerical illustration of radial placement}\label{subsec:radial-placement-numerics}
We demonstrate this effect in dimension $d=2$. Let
\begin{align*}
\mathcal{I}_{1}=[0,0.8],\qquad
\mathcal{I}_{2}=[2,\sqrt{2^2+0.8^2}],\qquad
\mathcal{I}_{3}=[4,\sqrt{4^2+0.8^2}],
\end{align*}
and set $D_i=D_{\mathcal{I}_i}$, $i=1,2,3$. These three radial localization domains have the same area,
\begin{align*}
|D_1|=|D_2|=|D_3|=0.64\pi.
\end{align*}
We take $K=160$ and use the symmetric Nystr\"om discretization~\eqref{eq:app-nystrom-matrix} with $m=600$ quadrature nodes. A convergence test over $350\leq m\leq750$ showed that both the sorted spectra and the numerical traces had stabilized by $m=600$. For each $\kappa$, the eigenvalues from all angular blocks $0\leq \degaInCFive\leq \degAInCFive$ are collected with multiplicity $\mathrm{dim}(H_{\degaInCFive}^{2})$ and sorted in descending order. The vertical dashed lines in Figure~\ref{fig:eigen-spectra-d2-comparison} mark the predicted indices $K^2\mu_\kappa(D_i)$.

\begin{figure}[!p]
\centering
\begin{subfigure}{0.98\textwidth}
\centering
\includegraphics[width=\textwidth]{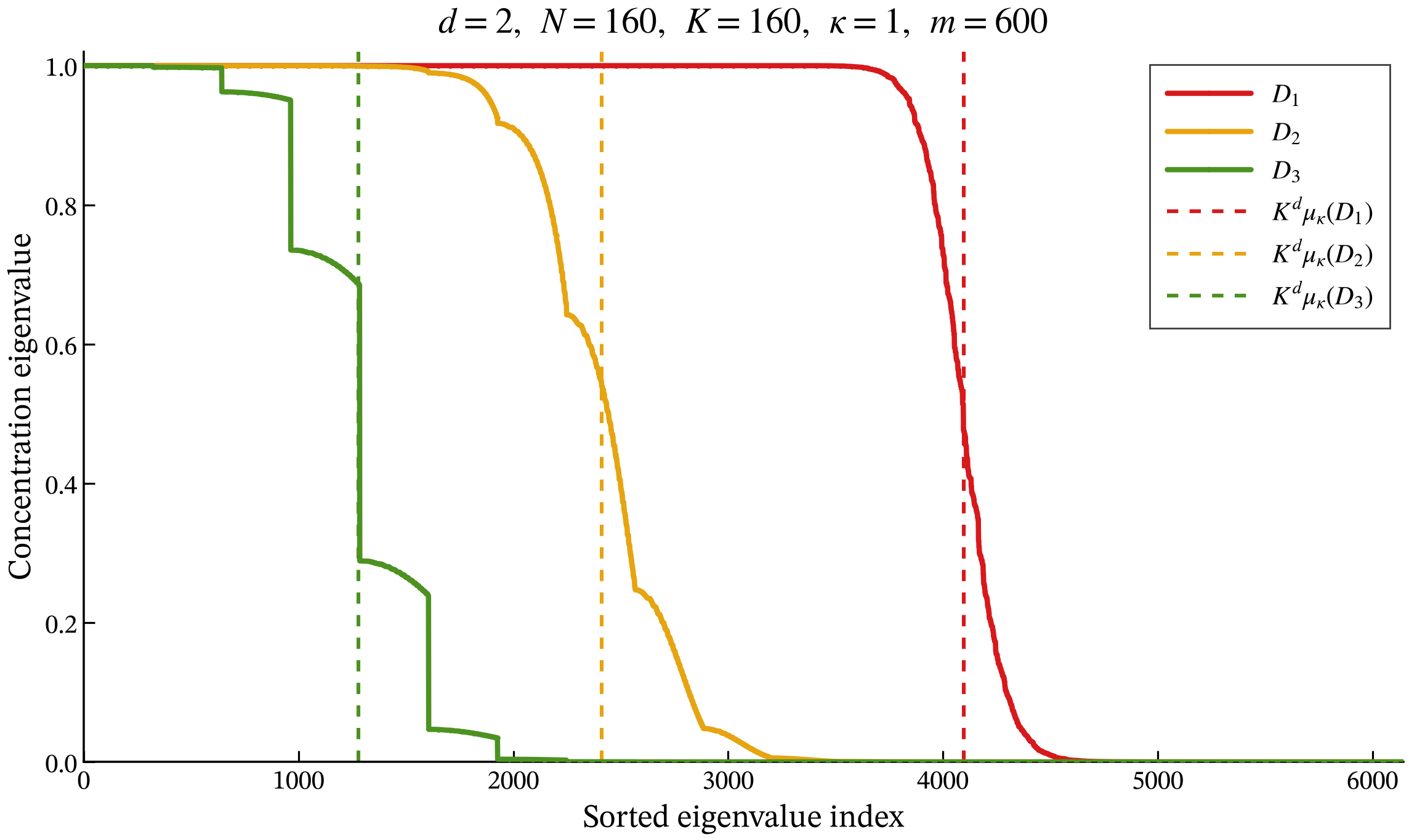}
\caption{$\kappa=1$ ($K=160$, $\degAInCFive=160$). The predicted indices $K^2\mu_\kappa(D_i)$ are $4096.000$, $2410.039$, and $1277.703$.}
\label{fig:eigen-spectra-d2-kappa1}
\end{subfigure}
\vspace{0.6em}
\begin{subfigure}{0.98\textwidth}
\centering
\includegraphics[width=\textwidth]{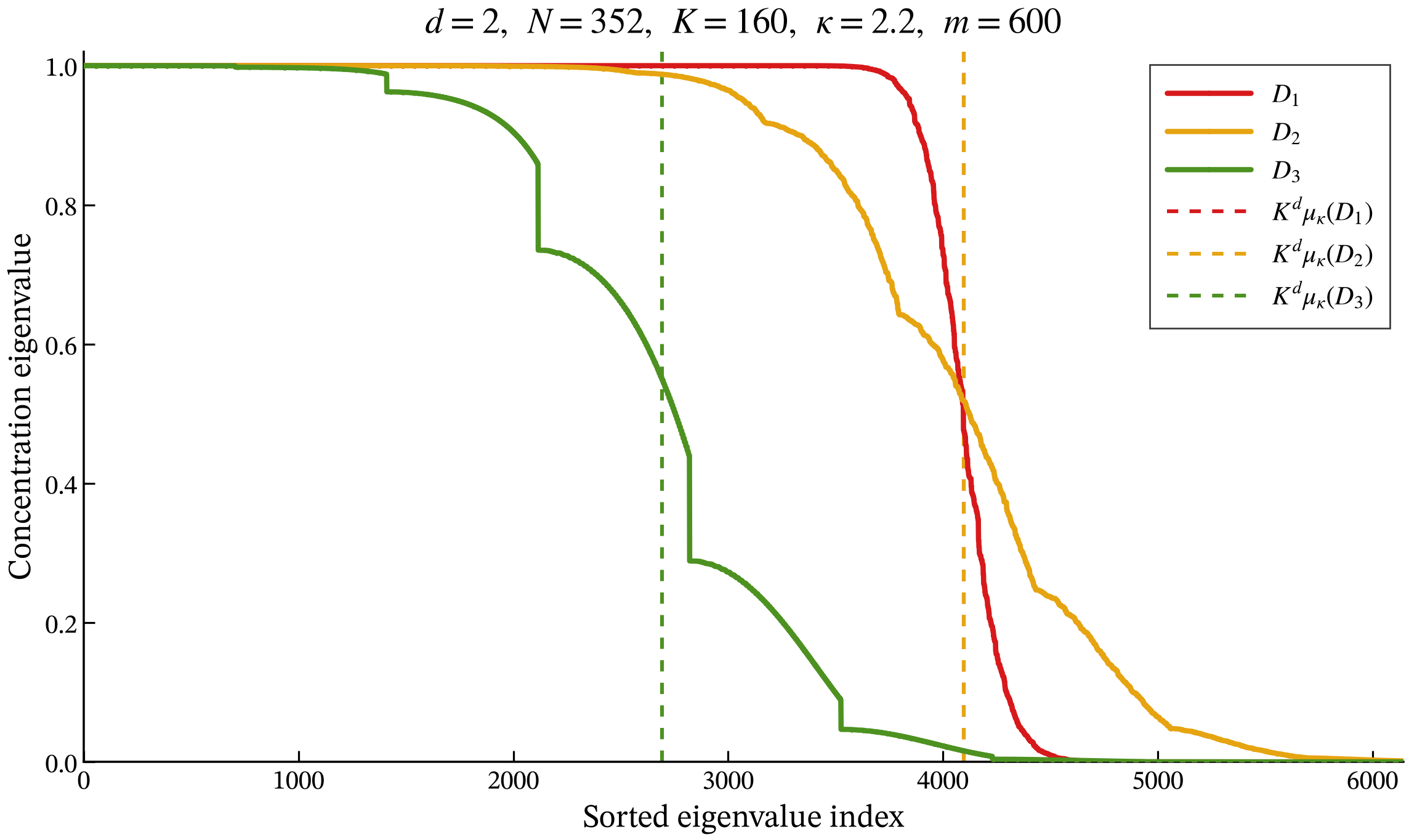}
\caption{$\kappa=2.2$ ($K=160$, $\degAInCFive=352$). The predicted indices $K^2\mu_\kappa(D_i)$ are $4096.000$, $4096.000$, and $2692.790$.}
\label{fig:eigen-spectra-d2-kappa22}
\end{subfigure}
\caption{Sorted SFB concentration eigenvalues for three equal-area radial domains in dimension $d=2$. Here $D_i=\{x\in\R^2:\|x\|\in\mathcal I_i\}$, with $\mathcal I_1=[0,0.8]$, $\mathcal I_2\approx[2,2.15407]$, and $\mathcal I_3\approx[4,4.07922]$. The dashed vertical lines mark the theoretical leading counts $K^2\mu_\kappa(D_i)$ calculated from~\eqref{eq:app-radial-mu}. In panel~(b), the red and yellow dashed lines coincide.}
\label{fig:eigen-spectra-d2-comparison}
\end{figure}
\FloatBarrier

Figure~\ref{fig:eigen-spectra-d2-comparison} displays two distinct consequences of the radial Shannon coefficient. First, for $\kappa=1$, the three domains have identical Euclidean area but substantially different predicted plunge locations. The disk $D_1$ lies in the constant Paley--Wiener-like plateau and has predicted count $4096.000$. The annuli $D_2$ and $D_3$ sample progressively smaller values of the transition profile and have predicted counts $2410.039$ and $1277.703$, respectively. The observed spectral transitions occur near the corresponding dashed lines. Thus equal area does not imply equal leading effective dimension once the angular cutoff has broken translation invariance.

Second, comparison of the two panels shows that radius alone is not the relevant variable; the profile is sampled at the scaled radius $r/\kappa$. Increasing $\kappa$ from $1$ to $2.2$ expands the constant-density region. Consequently, both $D_1$ and $D_2$ lie in the plateau for $\kappa=2.2$, and their predicted counts coincide:
\[
K^2\mu_\kappa(D_1)=K^2\mu_\kappa(D_2)=4096.000.
\]
Their spectra plunge at essentially the same index even though the domains occupy very different radial locations. The outer annulus $D_3$ still reaches the transition region, so its smaller predicted count $2692.790$ produces an earlier plunge.

The dashed lines are determined entirely by the theoretical quantity $K^2\mu_\kappa(D_i)$ and are not adjusted using the eigenvalue curves. Their agreement with the observed transition indices therefore illustrates two central consequences of Theorem~\ref{thm:eigendistri}: radial placement matters outside the plateau, and increasing the angular bandwidth pushes the Paley--Wiener-like regime farther from the origin.

\section{Concluding remarks}

We have studied a two-parameter family of spectral projections obtained from the classical Paley--Wiener projection by imposing an additional spherical-harmonic cutoff. This cutoff preserves rotations but destroys translation invariance, and this loss of translation invariance is reflected in the diagonal reproducing kernel. In the coupled regime \(N/K\to\kappa\), the normalized diagonal kernel \(K^{-d}K_{N,K}(x,x)\) converges to a nonconstant radial local density, which interpolates between the constant Paley--Wiener density near the origin and a Hankel-type radial-density law in the far field.

This local-density point of view determines the leading spectral behavior of the associated concentration operators. The Shannon number is no longer determined only by the Euclidean volume of the localization domain, but by the integral of the limiting radial density over that domain. Thus domains with the same volume may have different leading effective dimensions when they occupy different radial regions. In this sense, the usual space-bandwidth principle is modified by the non-translation-invariant angular cutoff.

These results also give a quantitative interpretation of the spatial resolution induced by SFB truncations. In practical spherical-coordinate representations, the radial bandwidth \(K\) and the angular cutoff \(N\) are often chosen separately. The present analysis shows that their ratio \(\kappa=N/K\) determines the radial scale at which the Paley--Wiener-like plateau gives way to the Hankel-type far-field regime. Varying this truncation ratio therefore changes the effective spatial resolution across radial locations. This provides a way to interpret how radial bandwidth, angular resolution, and radial placement interact in SFB-based localized reconstruction problems, and suggests that the choice of \(N\) relative to \(K\) can be used to balance resolution across different spatial regions.

%\section*{Acknowledgements}

%The author is grateful to Laurent Baratchart, Christian Gerhards, and Volker Michel for helpful suggestions during the preparation of this work. The author also thanks the anonymous referee for comments on an earlier version of this manuscript, which led to substantial improvements in the presentation and organization. The author was supported by the Deep Earth Probe and Mineral Resources Exploration-National Science and Technology Major Project under Grant No. 2025ZD1008504.
\printbibliography

@article{MarcecaRomeroSpeckbacher2024,
  author  = {Marceca, F. and Romero, J. L. and Speckbacher, M.},
  title   = {Eigenvalue estimates for Fourier concentration operators on two domains},
  journal = {Archive for Rational Mechanics and Analysis},
  volume  = {248},
  number  = {3},
  pages   = {35},
  year    = {2024},
  doi     = {10.1007/s00205-024-01979-9}
}

@article{gerhards2023slepian,
      title={Spatiospectral localization within the ball---studies on the influence of the spectral shape},
      author={Gerhards, C. and Huang, X.},
      journal={Applied and Computational Harmonic Analysis},
      volume={84},
      pages={101882},
      year={2026},
      doi={10.1016/j.acha.2026.101882},
}

@article{Leistedt2012,
  title = {3DEX: a code for fast spherical Fourier-Bessel decomposition of 3D surveys},
  volume = {540},
  ISSN = {1432-0746},
  url = {http://dx.doi.org/10.1051/0004-6361/201118463},
  DOI = {10.1051/0004-6361/201118463},
  journal = {Astronomy \& Astrophysics},
  publisher = {EDP Sciences},
  author = {Leistedt,  B. and Rassat,  A. and Réfrégier,  A. and Starck,  J.-L.},
  year = {2012},
  month = mar,
  pages = {A60}
}

@article{10.1093/mnras/stv2724,
    author = {Munshi, D. and Pratten, G. and Valageas, P. and Coles, P. and Brax, P.},
    title = "{Galaxy clustering in 3D and modified gravity theories}",
    journal = {Monthly Notices of the Royal Astronomical Society},
    volume = {456},
    number = {2},
    pages = {1627-1644},
    year = {2015},
    month = {12},
    abstract = "{We study Modified Gravity (MG) theories by modelling the redshifted matter power spectrum in a spherical Fourier–Bessel basis. We use a fully non-linear description of the real-space matter power spectrum and include the lowest order redshift-space correction (Kaiser effect), taking into account some additional non-linear contributions. Ignoring relativistic corrections, which are not expected to play an important role for a shallow survey, we analyse two different MG scenarios, namely the generalized Dilaton scalar–tensor theories and the f (R) models in the large curvature regime. We compute the 3D power spectrum \\$\\{\\cal C\\}^\\{\\rm s\\}\_\\{\\ell \\}(k\_1,k\_2)\\$ for various such MG theories with and without redshift-space distortions, assuming precise knowledge of background cosmological parameters. Using an all-sky spectroscopic survey with Gaussian selection function \\$\\varphi (r)\\propto \\exp (-\\{r^2/r^2\_0\\}), r\_0=150h^\\{-1\\}\\,\\$Mpc, and number density of galaxies \\$\\bar\\{\\it N\\} =10^\\{-4\\}\\;\\{\\rm Mpc\\}^\\{-3\\}\\$, we use a χ2 analysis, and find that the lower order (ℓ ≤ 25) multipoles of \\$\\{\\cal C\\}^\\{\\rm s\\}\_\\ell (k,k^\\{\\prime \\})\\$ (with radial modes restricted to k \\&lt; 0.2 h Mpc−1) can constraint the parameter \\$f\_\\{R\_0\\}\\$ at a level of 2 × 10−5(3 × 10−5) with 3σ confidence for n = 1(2). Combining constraints from higher ℓ \\&gt; 25 modes can further reduce the error bars and thus in principle make cosmological gravity constraints competitive with Solar system tests. However this will require an accurate modelling of non-linear redshift-space distortions. Using a tomographic β(a)–m(a) parametrization we also derive constraints on specific parameters describing the Dilaton models of MG.}",
    issn = {0035-8711},
    doi = {10.1093/mnras/stv2724},
    url = {https://doi.org/10.1093/mnras/stv2724},
}

@article{PhysRevD.90.103532,
  title = {Simple proposal for radial 3D needlets},
  author = {Durastanti, C. and Fantaye, Y. and Hansen, F. and Marinucci, D. and Pesenson, I. Z.},
  journal = {Physical Review D},
  volume = {90},
  issue = {10},
  pages = {103532},
  numpages = {12},
  year = {2014},
  month = {11},
  publisher = {American Physical Society},
  doi = {10.1103/PhysRevD.90.103532},
  url = {https://link.aps.org/doi/10.1103/PhysRevD.90.103532}
}

@article{Heavens1995,
  title = {A spherical harmonic analysis of redshift space},
  volume = {275},
  ISSN = {1365-2966},
  url = {http://dx.doi.org/10.1093/mnras/275.2.483},
  DOI = {10.1093/mnras/275.2.483},
  number = {2},
  journal = {Monthly Notices of the Royal Astronomical Society},
  publisher = {Oxford University Press (OUP)},
  author = {Heavens,  A. F. and Taylor,  A. N.},
  year = {1995},
  month = jul,
  pages = {483–497}
}

@article{SuperFaB,
  title = {Fabulous code for spherical Fourier-Bessel decomposition},
  author = {Grasshorn Gebhardt, H. S. and Dor\'e, O.},
  journal = {Physical Review D},
  volume = {104},
  issue = {12},
  pages = {123548},
  numpages = {30},
  year = {2021},
  month = {12},
  publisher = {American Physical Society},
  doi = {10.1103/PhysRevD.104.123548},
  url = {https://link.aps.org/doi/10.1103/PhysRevD.104.123548}
}

@article{Pratten2013,
  title = {Effects of linear redshift space distortions and perturbation theory on BAOs: a 3D spherical analysis},
  volume = {436},
  ISSN = {0035-8711},
  url = {http://dx.doi.org/10.1093/mnras/stt1854},
  DOI = {10.1093/mnras/stt1854},
  number = {4},
  journal = {Monthly Notices of the Royal Astronomical Society},
  publisher = {Oxford University Press (OUP)},
  author = {Pratten,  G. and Munshi,  D.},
  year = {2013},
  month = {10},
  pages = {3792–3808}
}

@ARTICLE{Fisher_1995-te,
  title     = "Wiener reconstruction of density, velocity and potential fields
               from all-sky galaxy redshift surveys",
  journal   = "Monthly Notices of the Royal Astronomical Society.",
  publisher = "Oxford University Press (OUP)",
  month     =  feb,
  year      =  1995,
   volume = {272},
    number = {4},
    pages = {885-908},
  author = {Fisher, K. B. and Lahav, O. and Hoffman, Y. and Lynden-Bell, D. and Zaroubi, S.}
}

@article{Erb2019ShapesOU,
  title={Shapes of Uncertainty in Spectral Graph Theory},
  author={Erb, W.},
  journal={IEEE Transactions on Information Theory},
  year={2021},
  volume={67},
  number={2},
  pages={1291-1307}
}

@article{PhysRevD.90.063515,
  title = {Three-dimensional spherical analyses of cosmological spectroscopic surveys},
  author = {Nicola, A. and Refregier, A. and Amara, A. and Paranjape, A.},
  journal = {Physical Review D},
  volume = {90},
  issue = {6},
  pages = {063515},
  numpages = {20},
  year = {2014},
  month = {9},
  publisher = {American Physical Society},
  doi = {10.1103/PhysRevD.90.063515},
  url = {https://link.aps.org/doi/10.1103/PhysRevD.90.063515}
}

@article{PhysRevD.104.103523,
  title = {Joint analyses of 2D CMB lensing and 3D galaxy clustering in the spherical Fourier-Bessel basis},
  author = {Zhang, Y. and Pullen, A. R. and Maniyar, A. S.},
  journal = {Physical Review D},
  volume = {104},
  issue = {10},
  pages = {103523},
  numpages = {21},
  year = {2021},
  month = {11},
  publisher = {American Physical Society},
  doi = {10.1103/PhysRevD.104.103523},
  url = {https://link.aps.org/doi/10.1103/PhysRevD.104.103523}
}

@article{Lanusse2015,
  title = {3D galaxy clustering with future wide-field surveys: Advantages of a spherical Fourier-Bessel analysis},
  volume = {578},
  ISSN = {1432-0746},
  url = {http://dx.doi.org/10.1051/0004-6361/201424456},
  DOI = {10.1051/0004-6361/201424456},
  journal = {Astronomy \& Astrophysics},
  publisher = {EDP Sciences},
  author = {Lanusse,  F. and Rassat,  A. and Starck,  J.-L.},
  year = {2015},
  month = may,
  pages = {A10}
}

@ARTICLE{6280687,
  author={Leistedt, B. and McEwen, J. D.},
  journal={IEEE Transactions on Signal Processing}, 
  title={Exact Wavelets on the Ball}, 
  year={2012},
  volume={60},
  number={12},
  pages={6257-6269},
  keywords={Wavelet transforms;Harmonic analysis;Convolution;Polynomials;Signal resolution;Ball;harmonic analysis;wavelets},
  doi={10.1109/TSP.2012.2215030}}

@article{Lanusse2012,
  title = {Spherical 3D isotropic wavelets},
  volume = {540},
  ISSN = {1432-0746},
  url = {http://dx.doi.org/10.1051/0004-6361/201118568},
  DOI = {10.1051/0004-6361/201118568},
  journal = {Astronomy \& Astrophysics},
  publisher = {EDP Sciences},
  author = {Lanusse,  F. and Rassat,  A. and Starck,  J.-L.},
  year = {2012},
  month = {4},
  pages = {A92}
}

@article{Rassat2012,
  title = {3D spherical analysis of baryon acoustic oscillations},
  volume = {540},
  ISSN = {1432-0746},
  url = {http://dx.doi.org/10.1051/0004-6361/201118638},
  DOI = {10.1051/0004-6361/201118638},
  journal = {Astronomy \& Astrophysics},
  publisher = {EDP Sciences},
  author = {Rassat,  A. and Refregier,  A.},
  year = {2012},
  month = {4},
  pages = {A115}
}

@article{Miranian_2004,
doi = {10.1088/0266-5611/20/3/014},
url = {https://dx.doi.org/10.1088/0266-5611/20/3/014},
year = {2004},
month = {4},
publisher = {},
volume = {20},
number = {3},
pages = {877},
author = {Miranian, L.},
title = {Slepian functions on the sphere, generalized Gaussian quadrature rule},
journal = {Inverse Problems},
abstract = {Denote by K the operator of ‘time–band–time’ limiting on the surface of the sphere and consider the problem of computing singular vectors of K. This problem can be reduced to a simpler task of computing eigenfunctions of a differential operator, if a differential operator, which commutes with K and has a simple spectrum, can be exhibited. In Grünbaum et al (1982 SIAM J. Appl. Math. 42 941–55) such a second-order differential operator commuting with K on the appropriate subspaces was constructed. In this paper, this algebraic property of commutativity is used to produce an efficient numerical scheme for computing a convenient basis for the space of singular vectors of K. The basis forms an extended Chebyshev system, and a generalized Gaussian quadrature rule for such a basis is presented.}
}

@article{CASTRO2024101600,
title = {Time and band limiting for exceptional polynomials},
journal = {Applied and Computational Harmonic Analysis},
volume = {68},
pages = {101600},
year = {2024},
issn = {1063-5203},
doi = {10.1016/j.acha.2023.101600},
url = {https://www.sciencedirect.com/science/article/pii/S1063520323000878},
author = { Castro, M.  and Grünbaum, F. A.  and Zurrián, I.},
keywords = {Time-band limiting, Exceptional polynomials},
abstract = {The “time-and-band limiting” commutative property was found and exploited by D. Slepian, H. Landau and H. Pollak at Bell Labs in the 1960's, and independently by M. Mehta and later by C. Tracy and H. Widom in Random matrix theory. The property in question is the existence of local operators with simple spectrum that commute with naturally appearing global ones. Here we give a general result that insures the existence of a commuting differential operator for a given family of exceptional orthogonal polynomials satisfying the “bispectral property”. As a main tool we go beyond bispectrality and make use of the notion of Fourier Algebras associated to the given sequence of exceptional polynomials. We illustrate this result with two examples, of Hermite and Laguerre type, exhibiting also a nice Perline's form for the commuting differential operator.}
}

@article{CASTRO2015328,
title = {The Darboux process and time-and-band limiting for matrix orthogonal polynomials},
journal = {Linear Algebra and its Applications},
volume = {487},
pages = {328-341},
year = {2015},
issn = {0024-3795},
doi = {10.1016/j.laa.2015.09.012},
url = {https://www.sciencedirect.com/science/article/pii/S0024379515005273},
author = {Castro, M.  and Grünbaum,  F. A. },
keywords = {Time-band limiting, Matrix valued orthogonal polynomials, Darboux process},
abstract = {We extend to a situation involving matrix valued orthogonal polynomials a scalar result that originates in work of Claude Shannon in laying the mathematical foundations of information theory and a remarkable series of papers by D. Slepian, H. Landau and H. Pollak at Bell Labs in the 1960's. We show that in this case an algebraic miracle that plays a very important role in the classical case survives an application of the so-called Darboux process in the matrix valued context.}
}

@article{grunbaum2015time,
  title={Time and band limiting for matrix valued functions, an example},
  author = {Gr{\"u}nbaum, F. A.  and Pacharoni, I. and  Zurrián, I.},
  journal={SIGMA. Symmetry, Integrability and Geometry: Methods and Applications},
  volume={11},
  pages={044},
  year={2015},
  publisher={SIGMA. Symmetry, Integrability and Geometry: Methods and Applications}
}

@article{Grünbaum_2017,
doi = {10.1088/1361-6420/aa53b8},
url = {https://dx.doi.org/10.1088/1361-6420/aa53b8},
year = {2017},
month = {1},
publisher = {IOP Publishing},
volume = {33},
number = {2},
pages = {025005},
author = {Gr{\"u}nbaum, F. A.  and Pacharoni, I. and  Zurrián, I.},
title = {Time and band limiting for matrix valued functions: an integral and a commuting differential operator},
journal = {Inverse Problems},
abstract = {The problem of recovering a signal of finite duration from a piece of its Fourier transform was solved at Bell Labs in the 1960’s, by exploiting a ‘miracle’: a certain naturally appearing integral operator commutes with an explicit differential one. Here we show that this same miracle holds in a matrix valued version of the same problem.}
}

@article{GRUNBAUM1983491,
title = {A new property of reproducing kernels for classical orthogonal polynomials},
journal = {Journal of Mathematical Analysis and Applications},
volume = {95},
number = {2},
pages = {491-500},
year = {1983},
issn = {0022-247X},
doi = {10.1016/0022-247X(83)90123-3},
url = {https://www.sciencedirect.com/science/article/pii/0022247X83901233},
author = {Gr{\"u}nbaum, F. A. },
abstract = {We exhibit a second-order differential operator commuting with the reproducing kernel ∑n − 0T φn(λ) φn(μ)hn each time that {φn(λ)} is one of the classical orthogonal polynomials: Jacobi, Laguerre, Hermite and Bessel. This is the analog of a known property in the study of time and band-limited signals.}
}

@article{Landau_BesselBounds,
author = {Landau, L. J.},
title = {Bessel Functions: Monotonicity and Bounds},
journal = {Journal of the London Mathematical Society},
volume = {61},
number = {1},
pages = {197-215},
doi = {10.1112/S0024610799008352},
url = {https://londmathsoc.onlinelibrary.wiley.com/doi/abs/10.1112/S0024610799008352},
abstract = {Monotonicity with respect to the order v of the magnitude of general Bessel functions Cv(x) = aJv(x)+bYv(x) at positive stationary points of associated functions is derived. In particular, the magnitude of Cv at its positive stationary points is strictly decreasing in v for all positive v. It follows that supx∣Jv(x)∣ strictly decreases from 1 to 0 as v increases from 0 to ∞. The magnitude of x1/2Cv(x) at its positive stationary points is strictly increasing in v. It follows that supx∣x1/2Jv(x)∣ equals √2/π for 0 ⩽ v ⩽ 1/2 and strictly increases to ∞ as v increases from 1/2 to ∞. It is shown that v1/3supx∣Jv(x)∣ strictly increases from 0 to b = 0.674885… as v increases from 0 to ∞. Hence for all positive v and real x, |Jv(x)|<bv-1/3 where b is the best possible such constant. Furthermore, for all positive v and real x, |Jv(x)|⩽c|x|-1/3 where c = 0.7857468704… is the best possible such constant. Additionally, errors in work by Abramowitz and Stegun and by Watson are pointed out.},
year = {2000}
}

@Article{Dunster2017,
  author   = {Dunster, T. M.},
  journal  = {Constructive Approximation},
  title    = {On the Order Derivatives of Bessel Functions},
  year     = {2017},
  doi      = {10.1007/s00365-016-9355-1},
  pages    = {47–68},
  url      = {https://dx.doi.org/10.1088/1361-6420/ac1e82},
  volume   = {46},
  abstract = {The derivatives with respect to order for the Bessel functions $$J_{\nu }(x)$$and $$Y_{\nu }(x)$$, where $$\nu >0$$and $$x\ne 0$$(real or complex), are studied. Representations are derived in terms of integrals that involve the products pairs of Bessel functions, and in turn series expansions are obtained for these integrals. From the new integral representations for $$\partial J_{\nu }(x)/\partial \nu $$and $$\partial Y_{\nu }(x)/\partial \nu $$, asymptotic approximations involving Airy functions are constructed for the case $$\nu $$large, which are uniformly valid for $$0<x<\infty $$.},
}

@book{Olver:1997:ASF,
    author = {Olver, F. W. J.},
     title = {Asymptotics and Special Functions},
 publisher = {A. K. Peters},
   address = {Wellesley, MA},
      year = {1997},
     pages = {xviii+572},
      note = {Reprint, with corrections, of original Academic Press edition, 1974},
      isbn = {1-56881-069-5},
   mrclass = {41-02 (33Cxx 41A60 65D20)},
  mrnumber = {MR1429619 (97i:41001)},
     zblno = {0982.41018}}

@book{Titchmarsh:1986:ITF,
    author = {Titchmarsh, E. C.},
     title = {Introduction to the Theory of {F}ourier Integrals},
   edition = {Third},
 publisher = {Chelsea Publishing Co.},
   address = {New York},
      year = {1986},
     pages = {x+394},
      isbn = {0-8284-0324-4},
   mrclass = {42-03 (01A75)},
  mrnumber = {MR942661 (89c:42002)},
      note = {Original edition was published in 1937 by Oxford University Press. Expanded bibliography and some improvements were made for the second and third editions.},
     zblno = {0017.40404}}

@article{Simons2006,
    author = {Simons, F. J. and Dahlen, F. A.},
    title = "{Spherical Slepian functions and the polar gap in geodesy}",
    journal = {Geophysical Journal International},
    volume = {166},
    number = {3},
    pages = {1039-1061},
    year = {2006},
    month = {09},
    abstract = "{The estimation of potential fields such as the gravitational or magnetic potential at the surface of a spherical planet from noisy observations taken at an altitude over an incomplete portion of the globe is a classic example of an ill-posed inverse problem. We show that this potential-field estimation problem has deep-seated connections to Slepian's spatiospectral localization problem which seeks bandlimited spherical functions whose energy is optimally concentrated in some closed portion of the unit sphere. This allows us to formulate an alternative solution to the traditional damped least-squares spherical harmonic approach in geodesy, whereby the source field is now expanded in a truncated Slepian function basis set. We discuss the relative performance of both methods with regard to standard statistical measures such as bias, variance and mean squared error, and pay special attention to the algorithmic efficiency of computing the Slepian functions on the region complementary to the axisymmetric polar gap characteristic of satellite surveys. The ease, speed, and accuracy of our method make the use of spherical Slepian functions in earth and planetary geodesy practical.}",
    issn = {0956-540X},
    doi = {10.1111/j.1365-246X.2006.03065.x},
    url = {https://doi.org/10.1111/j.1365-246X.2006.03065.x},
}

@misc{roddy2023slepian,
  author      = {Roddy, P. J. and McEwen, J. D.},
  title       = {Slepian Scale-Discretised Wavelets on Manifolds},
  year        = {2023},
  eprint      = {2302.06006},
  eprintclass = {cs.IT},
  eprinttype  = {arxiv},
  note        = {arXiv preprint \href{https://arxiv.org/abs/2302.06006}{arXiv:2302.06006}},
%	url = {https://arxiv.org/abs/2302.06006},
}

@ARTICLE{GraphSlepian17,
  author={Van De Ville, D. and Demesmaeker, R. and Preti, M. G.},
  journal={IEEE Signal Processing Letters}, 
  title={When Slepian Meets Fiedler: Putting a Focus on the Graph Spectrum}, 
  year={2017},
  volume={24},
  number={7},
  pages={1001-1004},
  keywords={Laplace equations;Eigenvalues and eigenfunctions;Indexes;Topology;Spectral analysis;Bandwidth;Graph cut;graph signal processing;graph Laplacian;Laplacian embedding;Slepian functions},
  doi={10.1109/LSP.2017.2704359}}

@ARTICLE{Thomson82,
  author={Thomson, D. J.},
  journal={Proceedings of the IEEE}, 
  title={Spectrum estimation and harmonic analysis}, 
  year={1982},
  volume={70},
  number={9},
  pages={1055-1096},
  keywords={Spectral analysis;Harmonic analysis;Frequency estimation;Equations;Time series analysis;Algorithm design and analysis;History;Maximum likelihood estimation;Extrapolation;Iterative methods},
  doi={10.1109/PROC.1982.12433}}

@book{FreGut13,
	author = {Freeden, W. and Gutting, M.},
	title = {Special Functions of Mathematical (Geo-)Physics},
	publisher = {Birkhäuser},
	year = {2013},
	series = {Applied and Numerical Harmonic Analysis},
	address = {Basel :},
	note = {Description based upon print version of record},
	url = {http://dx.doi.org/10.1007/978-3-0348-0563-6}
}

@article{michelsimons18,
	Author = {Michel, V. and Simons, F. J.},
	Journal = {Inverse Problems},
	Owner = {downwall},
	Pages = {125016},
	Timestamp = {2018.01.24},
	Title = {A general approach to regularizing inverse problems with regional data using {S}lepian wavelets},
	Volume = {33},
	Year = {2017}}

@book{wendland05,
	Author = {Wendland, H.},
	Owner = {downwall},
	Publisher = {Cambridge University Press},
	Timestamp = {2017.12.26},
	Title = {Scattered Data Approximation},
	Year = {2005}}

@Article{Michel2022,
  author   = {Michel, V. and Plattner, A. and Seibert, K.},
  title    = {A unified approach to scalar, vector, and tensor Slepian functions on the sphere and their construction by a commuting operator},
  doi      = {10.1142/S0219530521500317},
  number   = {05},
  pages    = {947-988},
  url      = {https://doi.org/10.1142/S0219530521500317},
  volume   = {20},
  abstract = {We present a unified approach for constructing Slepian functions — also known as prolate spheroidal wave functions — on the sphere for arbitrary tensor ranks including scalar, vectorial, and rank 2 tensorial Slepian functions, using spin-weighted spherical harmonics. For the special case of spherical cap regions, we derived commuting operators, allowing for a numerically stable and computationally efficient construction of the spin-weighted spherical-harmonic-based Slepian functions. Linear relationships between the spin-weighted and the classical scalar, vectorial, tensorial, and higher-rank spherical harmonics allow the construction of classical spherical-harmonic-based Slepian functions from their spin-weighted counterparts, effectively rendering the construction of spherical-cap Slepian functions for any tensorial rank a computationally fast and numerically stable task.},
  journal  = {Analysis and Applications},
  year     = {2022},
}

@Article{Plattner2017,
  author   = {Plattner, A. and Simons, F. J.},
  date     = {2017},
  journal  = {Geophysical Journal International},
  title    = {{Internal and external potential-field estimation from regional vector data at varying satellite altitude}},
  year     = {2017},
  doi      = {10.1093/gji/ggx244},
  issn     = {0956-540X},
  number   = {1},
  pages    = {207-238},
  url      = {https://doi.org/10.1093/gji/ggx244},
  volume   = {211},
  abstract = {{When modelling satellite data to recover a global planetary magnetic or gravitational potential field, the method of choice remains their analysis in terms of spherical harmonics. When only regional data are available, or when data quality varies strongly with geographic location, the inversion problem becomes severely ill-posed. In those cases, adopting explicitly local methods is to be preferred over adapting global ones (e.g. by regularization). Here, we develop the theory behind a procedure to invert for planetary potential fields from vector observations collected within a spatially bounded region at varying satellite altitude. Our method relies on the construction of spatiospectrally localized bases of functions that mitigate the noise amplification caused by downward continuation (from the satellite altitude to the source) while balancing the conflicting demands for spatial concentration and spectral limitation. The ‘altitude-cognizant’ gradient vector Slepian functions (AC-GVSF) enjoy a noise tolerance under downward continuation that is much improved relative to the ‘classical’ gradient vector Slepian functions (CL-GVSF), which do not factor satellite altitude into their construction. Furthermore, venturing beyond the realm of their first application, published in a preceding paper, in the present article we extend the theory to being able to handle both internal and external potential-field estimation. Solving simultaneously for internal and external fields under the limitation of regional data availability reduces internal-field artefacts introduced by downward-continuing unmodelled external fields, as we show with numerical examples. We explain our solution strategies on the basis of analytic expressions for the behaviour of the estimation bias and variance of models for which signal and noise are uncorrelated, (essentially) space- and band-limited, and spectrally (almost) white. The AC-GVSF are optimal linear combinations of vector spherical harmonics. Their construction is not altogether very computationally demanding when the concentration domains (the regions of spatial concentration) have circular symmetry, for example, on spherical caps or rings—even when the spherical-harmonic bandwidth is large. Data inversion proceeds by solving for the expansion coefficients of truncated function sequences, by least-squares analysis in a reduced-dimensional space. Hence, our method brings high-resolution regional potential-field modelling from incomplete and noisy vector-valued satellite data within reach of contemporary desktop machines.}},
}

@book{reed1981functional,
  title={Methods of Modern Mathematical Physics},
  author={Reed, M. and Simon, B.},
  isbn={9780080570488},
  series={I: Functional Analysis},
  url={https://books.google.de/books?id=rpFTTjxOYpsC},
  year={1981},
  publisher={Elsevier Science}
}

@book{Mller1997AnalysisOS,
  title={Analysis of Spherical Symmetries in Euclidean Spaces},
  author={C. M{\"u}ller},
  year={1997},
  isbn={978-1-4612-0581-4},
  lccn={77476087},
 doi={10.1007/978-1-4612-0581-4},
  series={Applied Mathematical Sciences},
  publisher={Springer New York, NY}
}

@Article{Marzo2007,
  author   = {J. Marzo},
  journal  = {Journal of Functional Analysis},
  title    = {Marcinkiewicz-{Z}ygmund inequalities and interpolation by spherical harmonics},
  year     = {2007},
  doi      = {10.1016/j.jfa.2007.05.010},
  issn     = {00221236},
  issue    = {2},
  pages    = {559-587},
  volume   = {250},
  abstract = {We find necessary density conditions for Marcinkiewicz-{Z}ygmund inequalities and interpolation for spaces of spherical harmonics in Sd with respect to the Lp norm. Moreover, we prove that there are no complete interpolation families for p ≠ 2. © 2007 Elsevier Inc. All rights reserved.},
  keywords = {Ball multiplier, Interpolation, Landau densities, Marcinkiewicz-{Z}ygmund inequalities, Paley-Wiener spaces, Spherical harmonics},
}

@Article{Abreu2012,
 author    = {L. D. Abreu and A. S. Bandeira},
  journal   = {Journal of Functional Analysis},
  title     = {Landau's necessary density conditions for the {H}ankel transform},
  year      = {2012},
  doi       = {10.1016/j.jfa.2011.11.024},
  issn      = {00221236},
  issue     = {4},
  pages     = {1845-1866},
  url       = {http://dx.doi.org/10.1016/j.jfa.2011.11.024},
  volume    = {262},
  abstract  = {We will prove an analogue of Landau's necessary conditions [H.J. Landau, Necessary density conditions for sampling and interpolation of certain entire functions, Acta Math. 117 (1967) 37-52] for spaces of functions whose Hankel transform is supported in a measurable subset S of the positive semi-axis. As a special case, necessary density conditions for the existence of Fourier-Bessel frames are obtained. © 2011.},
  keywords  = {Bessel functions, Beurling-Landau density, Fourier-Bessel frames, Hankel transform, Sampling and interpolation},
  publisher = {Elsevier Inc.},
}

@Article{Khalid2016a,
  author   = {Z. Khalid and R. A. Kennedy and J. D. McEwen},
  journal  = {Applied and Computational Harmonic Analysis},
  title    = {Slepian spatial-spectral concentration on the ball},
  year     = {2016},
  doi      = {10.1016/j.acha.2015.03.008},
  issn     = {1096603X},
  issue    = {3},
  pages    = {470-504},
  volume   = {40},
  abstract = {We formulate and solve the analog of Slepian spatial-spectral concentration problem on the three-dimensional ball. Both the standard Fourier-Bessel and also the Fourier-Laguerre spectral domains are considered since the latter exhibits a number of practical advantages such as spectral decoupling and exact computation. The Slepian spatial and spectral concentration problems are formulated as eigenvalue problems, the eigenfunctions of which form an orthogonal family of concentrated functions. Equivalence between the spatial and spectral problems is shown. The spherical Shannon number on the ball is derived, which acts as the analog of the space-bandwidth product in the Euclidean setting, giving an estimate of the number of concentrated eigenfunctions and thus the dimension of the space of functions that can be concentrated in both the spatial and spectral domains simultaneously. Various symmetries of the spatial region are considered that reduce considerably the computational burden of recovering eigenfunctions, either by decoupling the problem into smaller subproblems or by affording analytic calculations. The family of concentrated eigenfunctions forms a Slepian basis that can be used to represent concentrated signals efficiently. We illustrate our results with numerical examples and show that the Slepian basis indeed permits a sparse representation of concentrated signals.},
  keywords = {Ball, Band-limited function, Eigenvalue problem, Harmonic analysis, Slepian concentration problem},
}

@Article{Erb2015,
  author    = {W. Erb and S. Mathias},
  journal   = {Applied and Computational Harmonic Analysis},
  title     = {An alternative to Slepian functions on the unit sphere - A space-frequency analysis based on localized spherical polynomials},
  year      = {2015},
  doi       = {10.1016/j.acha.2014.03.009},
  issn      = {1096603X},
  issue     = {2},
  pages     = {222-241},
  url       = {http://dx.doi.org/10.1016/j.acha.2014.03.009},
  volume    = {38},
  abstract  = {In this article, we present a space-frequency theory for spherical harmonics based on the spectral decomposition of a particular space-frequency operator. The presented theory is closely linked to the theory of ultraspherical polynomials on the one hand, and to the theory of Slepian functions on the 2-sphere on the other. Results from both theories are used to prove localization and approximation properties of the new band-limited yet space-localized basis. Moreover, particular weak limits related to the structure of the spherical harmonics provide information on the proportion of basis functions needed to approximate localized functions. Finally, a scheme for the fast computation of the coefficients in the new localized basis is provided.},
  keywords  = {Jacobi matrices, Slepian functions, Spacefrequency analysis on the unit, Spherical harmonics, Ultraspherical polynomials, sphere},
  publisher = {Elsevier Inc.},
}

@Article{Erb2013,
  author    = {W. Erb},
  journal   = {Journal of Approximation Theory},
  title     = {An orthogonal polynomial analogue of the {L}andau-{P}ollak-{S}lepian time-frequency analysis},
  year      = {2013},
  doi       = {10.1016/j.jat.2012.10.009},
  issn      = {00219045},
  issue     = {1},
  pages     = {56-77},
  url       = {http://dx.doi.org/10.1016/j.jat.2012.10.009},
  volume    = {166},
  abstract  = {The aim of this article is to present a time-frequency theory for orthogonal polynomials on the interval [ - 1, 1] that runs parallel to the time-frequency analysis of bandlimited functions developed by Landau, Pollak and Slepian. For this purpose, the spectral decomposition of a particular compact time-frequency operator is studied. This decomposition and its eigenvalues are closely related to the theory of orthogonal polynomials. Results from both theories, the theory of orthogonal polynomials and the Landau-Pollak-Slepian theory, can be used to prove localization and approximation properties of the corresponding eigenfunctions. Finally, an uncertainty principle is proven that reflects the limitation of coupled time and frequency locatability. © 2012 Elsevier Inc.},
  keywords  = {Landau-Pollak-Slepian theory, Orthogonal polynomials, Time-frequency analysis, Uncertainty principles},
  publisher = {Elsevier Inc.},
}

@article{Perlstadt1986,
   author = {M. Perlstadt},
   doi = {10.1137/0517022},
   issn = {0036-1410},
   issue = {1},
   journal = {SIAM Journal on Mathematical Analysis},
   pages = {240-248},
   title = {Polynomial Analogues of Prolate Spheroidal Wave Functions and Uncertainty},
   volume = {17},
   year = {1986},
}

@Article{Slepian1978,
  author   = {D. Slepian},
  journal  = {Bell System Technical Journal},
  title    = {Prolate Spheroidal Wave Functions, {F}ourier Analysis, and Uncertainty - {V}: The Discrete Case},
  year     = {1978},
  doi      = {10.1002/j.1538-7305.1978.tb02104.x},
  issn     = {15387305},
  issue    = {5},
  pages    = {1371-1430},
  volume   = {57},
  abstract = {A discrete time series has associated with it an amplitude spectrum which is a periodic function of frequency. This paper investigates the extent to which a time series can be concentrated on a finite index set and also have its spectrum concentrated on a subinterval of the fundamental period of the spectrum. Key to the analysis are certain sequences, called discrete prolate spheroidal sequences, and certain functions of frequency called discrete prolate spheroidal functions. Their mathematical properties are investigated in great detail, and many applications to signal analysis are pointed out. © 1978 The Bell System Technical Journal},
}

@Article{Landau1962,
  author   = {H. J. Landau and H. O. Pollak},
  journal  = {Bell System Technical Journal},
  title    = {Prolate Spheroidal Wave Functions, {F}ourier Analysis and Uncertainty - {III}: The Dimension of the Space of Essentially Time- and Band-Limited Signals},
  year     = {1962},
  doi      = {10.1002/j.1538-7305.1962.tb03279.x},
  issn     = {00058580},
  issue    = {4},
  pages    = {1295-1336},
  url      = {https://ieeexplore.ieee.org/document/6773467},
  volume   = {41},
  abstract = {Seiring dengan kemajuan ilmu pengetahuan dan teknologi yang menjadi pusat perhatian dunia. Maka manusia dituntut untuk menciptakan peralatan-peralatan canggih untuk teknologi muktahir. Baik itu dalam bidang bisnis, perdagangan, kesehatan, militer, pendidikan, komunikasi dan budaya maupun bidang-bidang lainnya. Maka teknologi ini membawa perubahan pada peralatan-peralatan yang dulunya bekerja secara analog mulai dikembangkan secara digital, dan bahkan yang bekerjanya secara manual sekarang banyak dikembangkan secara otomatis, seperti kamera digital, handycam, dan sebagainya, dalam pembacaan pengukuran juga sudah dikembangkan ke dalam teknik digital. Contohnya perangkat Load Cell. Dan keuntungan menggunakan Load Cell adalah untuk mempermudah dalam pembacaan data untuk meminimalkan kesalahan dalam pembacaan data yang disebabkan adanya human error.Pada pemilihan Load Cell bertujuan untuk memilih kecocokan dalam membuat rancang bangun alat uji tarik kapasitas 3 ton, dimana dalam pemilihan ini kami memilih jenis load cell “S” karna alat yang kita rancang adalah uji tarik bukan uji tekan. Dengan kapasitas load cell 5 ton. Untuk membuat jarak aman dalam pengujian specimen ST41. Load Cell menggunakan system perangkat elektronik pengolahan data yang menjadi sebuah kurva tegangan regangan. Data-data yang diperoleh tersebut berupa besarnya pembebanan hasil dari pengujian specimen ST41. Kata},
  isbn     = {2013206534},
  keywords = {and engineering, biological response, biomaterials, department of materials science, drophilic, hy-, hydrophobic, pa 16802-5005, present address, state university, steidle hall, surface energy, surface forces, the pennsylvania, university park, usa, water structure},
  month    = {7},
}

@Article{Landau1961,
  author  = {H. J. Landau and H. O. Pollak},
  journal = {Bell System Technical Journal},
  title   = {Prolate Spheroidal Wave Functions, {F}ourier Analysis and Uncertainty - {II}},
  year    = {1961},
  doi     = {10.1002/j.1538-7305.1961.tb03977.x},
  issn    = {00058580},
  issue   = {1},
  pages   = {65-84},
  url     = {https://ieeexplore.ieee.org/document/6773660},
  volume  = {40},
  month   = {1},
}

@Article{Slepian1961,
  author  = {D. Slepian and H. O. Pollak},
  journal = {Bell System Technical Journal},
  title   = {Prolate Spheroidal Wave Functions, {F}ourier Analysis and Uncertainty - {I}},
  year    = {1961},
  doi     = {10.1002/j.1538-7305.1961.tb03976.x},
  issn    = {00058580},
  issue   = {1},
  pages   = {43-63},
  url     = {https://ieeexplore.ieee.org/document/6773659},
  volume  = {40},
  month   = {1},
}

@Article{Plattner2014,
  author    = {A. Plattner and F. J. Simons},
  date      = {2014},
  journal   = {Applied and Computational Harmonic Analysis},
  title     = {Spatiospectral concentration of vector fields on a sphere},
  year      = {2014},
  doi       = {10.1016/j.acha.2012.12.001},
  issn      = {10635203},
  issue     = {1},
  pages     = {1-22},
  url       = {http://dx.doi.org/10.1016/j.acha.2012.12.001},
  volume    = {36},
  abstract  = {We construct spherical vector bases that are bandlimited and spatially concentrated, or, alternatively, spacelimited and spectrally concentrated, suitable for the analysis and representation of real-valued vector fields on the surface of the unit sphere, as arises in the natural and biomedical sciences, and engineering. Building on the original approach of Slepian, Landau, and Pollak we concentrate the energy of our function bases into arbitrarily shaped regions of interest on the sphere, and within certain bandlimits in the vector spherical-harmonic domain. As with the concentration problem for scalar functions on the sphere, which has been treated in detail elsewhere, a Slepian vector basis can be constructed by solving a finite-dimensional algebraic eigenvalue problem. The eigenvalue problem decouples into separate problems for the radial and tangential components. For regions with advanced symmetry such as polar caps, the spectral concentration kernel matrix is very easily calculated and block-diagonal, lending itself to efficient diagonalization. The number of spatiospectrally well-concentrated vector fields is well estimated by a Shannon number that only depends on the area of the target region and the maximal spherical-harmonic degree or bandwidth. The spherical Slepian vector basis is doubly orthogonal, both over the entire sphere and over the geographic target region. Like its scalar counterparts it should be a powerful tool in the inversion, approximation and extension of bandlimited fields on the sphere: vector fields such as gravity and magnetism in the earth and planetary sciences, or electromagnetic fields in optics, antenna theory and medical imaging. © 2013 Elsevier Inc.},
  keywords  = {Bandlimited function, Concentration problem, Eigenvalue problem, Spectral analysis, Vector spherical harmonics},
  publisher = {Elsevier Inc.},
}

@Article{Simons2006b,
  author   = {F. J. Simons and F. A. Dahlen and M. A. Wieczorek},
  journal  = {SIAM Review},
  title    = {Spatiospectral concentration on a sphere},
  year     = {2006},
  doi      = {10.1137/S0036144504445765},
  issn     = {00361445},
  issue    = {3},
  pages    = {504-536},
  volume   = {48},
  abstract = {We pose and solve the analogue of Slepian's time-frequency concentration problem on the surface of the unit sphere to determine an orthogonal family of strictly bandlimited functions that are optimally concentrated within a closed region of the sphere, or, alternatively, of strictly spacelimited functions that are optimally concentrated within the spherical harmonic domain. Such a basis of simultaneously spatially and spectrally concentrated functions should be a useful data analysis and representation tool in a variety of geophysical and planetary applications, as well as in medical imaging, computer science, cosmology and numerical analysis. The spherical Slepian functions can be found either by solving an algebraic eigenvalue problem in the spectral domain or by solving a Fredholm integral equation in the spatial domain. The associated eigenvalues are a measure of the spatiospectral concentration. When the concentration region is an axisymmetric polar cap the spatiospectral projection operator commutes with a Sturm-Liouville operator; this enables the eigenfunctions to be computed extremely accurately and efficiently, even when their area-bandwidth product, or Shannon number, is large. In the asymptotic limit of a small concentration region and a large spherical harmonic bandwidth the spherical concentration problem approaches its planar equivalent, which exhibits self-similarity when the Shannon number is kept invariant.},
  keywords = {Bandlimited function, Commuting differential operator, Concentration problem, Eigenvalue problem, Multitaper spectral analysis, Spherical harmonics},
}

@Article{leweke18,
  author       = {Michel, V. and Orzlowski, S. and Schneider, N.},
  date         = {2018},
  journal      = {Numerical Functional Analysis and Optimization},
  title        = {Vectorial Slepian Functions on the Ball},
  year         = {2018},
  pages        = {1120-1152},
  volume       = {39},
  journaltitle = {Numerical Functional Analysis and Optimization},
}

@Article{sneeuw99,
  author       = {Albertella, A. and Sansò, F. and Sneeuw, N.},
  date         = {1999},
  journal      = {Journal of Geodesy},
  title        = {Band-limited functions on a bounded spherical domain: the {S}lepian problem on the sphere},
  year         = {1999},
  pages        = {436-447},
  volume       = {73},
  journaltitle = {Journal of Geodesy},
}

@Book{abramowitz72,
  author    = {Abramowitz, M. and Stegun, I. A.},
  title     = {Handbook of Mathematical Functions With Formulas, Graphs, and Mathematical Tables},
  year      = {1972},
  edition   = {10th},
  publisher = {National Bureau of Standards},
  series    = {Applied Mathematics Series},
}

@TechReport{WRB08,
  author       = "Q. Wang and O. Ronneberger and H. Burkhardt",
  title        = "Fourier Analysis in Polar and Spherical Coordinates",
  institution  = "IIF-LMB, Computer Science Department, University of Freiburg ",
  number       = "1",
  year         = "2008",
  url          = "http://lmb.informatik.uni-freiburg.de/Publications/2008/WRB08"
}

@book{Bowman,
  title={Introduction to Bessel Functions},
  author={Bowman, F.},
  isbn={9780486604626},
  lccn={58011271},
  series={Dover Books on Mathematics},
  url={https://books.google.de/books?id=_k6iJN5QabUC},
  year={1958},
  publisher={Dover Publications}
}

@book{DaiXu,
  title={Approximation Theory and Harmonic Analysis on Spheres and Balls},
  author={Dai, F. and Xu, Y.},
  isbn={978-1-4614-6659-8},
  doi={10.1007/978-1-4614-6660-4},
  url={https://link.springer.com/book/10.1007/978-1-4614-6660-4},
  year={2013},
  publisher={Springer New York, NY}
}

@book{Watson,
  title={A Treatise on the Theory of Bessel Functions},
  author={Watson, G. N.},
  isbn={9780521093828},
  lccn={66070565},
  url={https://books.google.de/books?id=87fqugAACAAJ},
  year={1944},
  publisher={Cambridge University Press}
}

@Article{Slepian1964GPSF,
  author  = {Slepian, D.},
  journal = {Bell System Technical Journal},
  title   = {Prolate Spheroidal Wave Functions, {F}ourier Analysis and Uncertainty---{IV}: Extensions to Many Dimensions; Generalized Prolate Spheroidal Functions},
  year    = {1964},
  number  = {6},
  pages   = {3009--3057},
  volume  = {43},
}

@Misc{Lederman2017GPSF,
  author      = {Lederman, R. R.},
  title       = {Numerical Algorithms for the Computation of Generalized Prolate Spheroidal Functions},
  year        = {2017},
  eprint      = {1710.02874},
  eprintclass = {math.NA},
  eprinttype  = {arxiv},
  note        = {arXiv preprint \href{https://arxiv.org/abs/1710.02874}{arXiv:1710.02874}},
}

@Article{Greengard2024GPSF,
  author  = {Greengard, P.},
  journal = {Pure and Applied Analysis},
  title   = {Generalized Prolate Spheroidal Functions: Algorithms and Analysis},
  year    = {2024},
  number  = {3},
  pages   = {789--833},
  volume  = {6},
}

@Article{IsraelMayeli2024,
  author  = {Israel, A. and Mayeli, A.},
  doi     = {10.1016/j.acha.2023.101620},
  journal = {Applied and Computational Harmonic Analysis},
  title   = {On the Eigenvalue Distribution of Spatio-Spectral Limiting Operators in Higher Dimensions},
  year    = {2024},
  pages   = {101620},
  volume  = {70},
}

@Article{HughesIsraelMayeli2025,
  author  = {Hughes, K. and Israel, A. and Mayeli, A.},
  doi     = {10.1007/s00041-025-10171-w},
  journal = {Journal of Fourier Analysis and Applications},
  title   = {On the Eigenvalue Distribution of Spatio-Spectral Limiting Operators in Higher Dimensions, {II}},
  year    = {2025},
  number  = {4},
  pages   = {51},
  volume  = {31},
}

\appendix

\section{Auxiliary proofs}\label{secC5:appendix}
%\paragraph{Proof of~\eqref{eqnC5:summation2integral}} 

\begin{prfof}{~\eqref{eqnC5:analysiDimd_2}} Assume that $0<r\leq1$; the case $r>1$ is similar and simpler. By~\eqref{eqnC5:analysiDimd_1},
	\begin{align}
		&\quad \frac{\sum_{\degaInCFive=0}^{\degAInCFive} {C}^{\langle d \rangle}_{\degaInCFive} J^2_{\degaInCFive+\frac{d-2}{2}}(\degAInCFive r)}{{C}^{\langle d \rangle}_{\degAInCFive}}-\left[ \functionInCFiveFA(r)- \sum_{\degaInCFive =0}^{\degAInCFive-1}\frac{{C}^{\langle d \rangle}_{\degaInCFive+1}-{C}^{\langle d \rangle}_{\degaInCFive}}{{C}^{\langle d \rangle}_{\degAInCFive}}\functionInCFiveFA\left(\tfrac{\degAInCFive}{\degaInCFive}r\right)\right]
		\\
		\nonumber &=\underbrace{\left[\sum_{\degaInCFive=0}^{\degAInCFive}J^2_{\degaInCFive+\frac{d-2}{2}}(\degAInCFive r)-\functionInCFiveFA(r)\right]}_{=:S1(\degAInCFive)}+\underbrace{\sum_{\degaInCFive =0}^{\degAInCFive-1}\frac{{C}^{\langle d \rangle}_{\degaInCFive+1}-{C}^{\langle d \rangle}_{\degaInCFive}}{{C}^{\langle d \rangle}_{\degAInCFive}}\left[\functionInCFiveFA\left(\tfrac{\degAInCFive}{\degaInCFive}r\right)-\sum_{{\degaInCFive}^{'}=0}^{\degaInCFive} J^2_{{\degaInCFive}^{'}+\frac{d-2}{2}}(\degAInCFive r)\right]}_{=:S2(\degAInCFive)}.
	\end{align}
	We abbreviate $\frac{{C}^{\langle d \rangle}_{\degaInCFive+1}-{C}^{\langle d \rangle}_{\degaInCFive}}{{C}^{\langle d \rangle}_{\degAInCFive}}$ by $a_{\degaInCFive}$.
	By~\eqref{eqnC5:keyequation}, $S1(\degAInCFive)\to0$ as $\degAInCFive\to\infty$. Treating $S2(\degAInCFive)$ requires uniform convergence, so we split the sum into four parts as in Remark~\ref{remC5:remarkofKeyequation}. For any $\epsilon>0$, choose $\delta>0$ such that
	$$\tfrac{3}{2}\{\delta^{d-2}+[1-(1-\delta)^{d-2}]r^{d-2}\}<\epsilon.$$
	Using this $\delta$, divide $I=\{0,\ldots,\degAInCFive-1\}$ into four subsets:
	\begin{align*}
		I_1=I\cap(0,\delta \degAInCFive)\quad I_2=I\cap[\delta \degAInCFive,(1-\delta)r\degAInCFive)\quad I_3=I\cap[(1-\delta)r\degAInCFive,r\degAInCFive)\quad I_4=I\cap[r\degAInCFive,\degAInCFive].
	\end{align*} Then
	\begin{align}
		S2(\degAInCFive)=\sum_{i=1,2,3,4}S2_i(\degAInCFive)\quad \text{where}\quad S2_i(\degAInCFive)=\sum_{\degaInCFive\in I_i}a_{\degaInCFive}\left[\functionInCFiveFA\left(\tfrac{\degAInCFive}{\degaInCFive}r\right)-\sum_{{\degaInCFive}^{'}=0}^{\degaInCFive} J^2_{{\degaInCFive}^{'}+\frac{d-2}{2}}(\degAInCFive r)\right]
	\end{align}
	Note that $\sum_{\degaInCFive\in I}a_{\degaInCFive}\leq1$. For $i=2,4$, we have $\frac{\degAInCFive r}{\degaInCFive}\in(\frac{1}{1-\delta},\frac{r}{\delta}]$ and $\frac{\degAInCFive r}{\degaInCFive}\in[r,1]$, respectively. By the uniform convergence in~\eqref{eqnC5:keyequation} on these regions (Remark~\ref{remC5:remarkofKeyequation}), $S2_2(\degAInCFive)$ and $S2_4(\degAInCFive)$ tend to zero. For $i=1,3$, we use $\functionInCFiveFA(s)\leq\tfrac12$ and the uniform bound $\sum_{{\degaInCFive}^{'}=0}^{\degaInCFive}J^2_{{\degaInCFive}^{'}+\frac{d-2}{2}}(\degAInCFive r)\leq1$. Thus
	\begin{align}
		\nonumber	|S2_1(\degAInCFive)+S2_3(\degAInCFive)| \leq \tfrac{3}{2}\left(\sum_{\degaInCFive\in I_1\cup I_3}a_{\degaInCFive}\right) &=\tfrac{3}{2}\{\delta^{d-2}+r^{d-2}(1-(1-\delta)^{d-2})+\bigo({\degAInCFive}^{-1})\}\\
		&\leq \epsilon +\bigo(\degAInCFive^{-1})
	\end{align}
	where we used~\eqref{eqnC5:asympoticofdCl} to estimate $\sum_{\degaInCFive\in I_1\cup I_3}a_{\degaInCFive}$. %, we have $\sum_{\degaInCFive\in I_1\cup I_3}a_{\degaInCFive}=\delta^{d-2}+r^{d-2}(1-(1-\delta)^{d-2})+\bigo(L^{-1})$. 
	This yields $$\limsup_{\degAInCFive\to\infty} |S2(\degAInCFive)| \leq \sum_{i=1,2,3,4}\limsup_{\degAInCFive\to\infty} |S2_i(\degAInCFive)|<\epsilon.$$
	%$\limsup_{\degAInCFive} |S2| \leq \tfrac{3}{2}\{\delta^{d-2}+[1-(1-\delta)^{d-2}]r^{d-2}\}<\epsilon$.
	Since $\epsilon$ is arbitrary, $S2(\degAInCFive)\to 0$, which completes the proof of~\eqref{eqnC5:analysiDimd_2}.
\end{prfof}

\begin{prfof}{~\eqref{eqnC5:summation2integral}} Let $r>0$ be fixed. We define two functionals on $C(\R_{+})$:
\begin{align}
\opFont{A}_{\degAInCFive}(f):=\sum_{\degaInCFive =0}^{\degAInCFive-1} \frac{{C}^{\langle d \rangle}_{\degaInCFive+1}-{C}^{\langle d \rangle}_{\degaInCFive}}{{C}^{\langle d \rangle}_{\degAInCFive}}f(\frac{\degAInCFive}{\degaInCFive}r), &&\text{and}&& \opFont{B}(f):=\int_{1}^{\infty}f(tr) \left[(d-2)t^{1-d}\right] \diffsymbol t.
\end{align}
Both $\opFont{A}_{\degAInCFive}$ and $\opFont{B}$ are linear and positive in the following sense: if $f(t)\geq g(t)$ for every $t$, then $\opFont{A}_{\degAInCFive}(f)\geq \opFont{A}_{\degAInCFive}(g)$ and $\opFont{B}(f)\geq \opFont{B}(g)$.

First, consider $f=\chi_{(0,x]}$ and $r=1$. In this case, the difference between $\opFont{A}_{\degAInCFive}(f)$ and $\opFont{B}(f)$ depends only on $x$ and $\lceil \degAInCFive/x\rceil$. In particular,
\begin{align}
|\opFont{A}_{\degAInCFive}(f)- \opFont{B}(f)|\leq x^{2-d}-\frac{{C}^{\langle d \rangle}_{\lceil \degAInCFive/x\rceil}}{{C}^{\langle d \rangle}_{\degAInCFive}}=\bigo(\frac{(\degAInCFive/x)^{d-3}}{\degAInCFive^{d-2}})=\bigo(\frac{1}{\degAInCFive x^{d-3}}),
\end{align}
which yields $\lim_{\degAInCFive\to\infty}\opFont{A}_{\degAInCFive}(f)=\opFont{B}(f)$. If $r\neq 1$, the same convergence follows by considering $f_{\langle r \rangle}=\chi_{(0,x/r]}$. We also allow $x=\infty$, corresponding to $f=\chi_{(0,\infty)}$. In this case, the definition gives $|\opFont{A}_{\degAInCFive}(f)-\opFont{B}(f)|=\tfrac{1}{{C}^{\langle d \rangle}_{\degAInCFive}}=\bigo(\degAInCFive^{2-d})$.

The function $\functionInCFiveFA$ defined in Definition~\ref{def:expressiong} is continuous and decreases to zero. Hence, for every $\epsilon>0$, there exist functions $F_1,F_2$ of the form
\begin{align}
F_{i}=\sum_{n=1}^{N_{i}} c_{n}^{i}\chi_{(0,x_{m}^{i}]} \quad\text{for}\quad i\in\{1,2\}, c_{m}^{i}\in \R ,x_{m}^{i}\in \R\cup\{\infty\},
\end{align}
such that 
\begin{align}
F_1\leq \functionInCFiveFA\leq F_2 \quad\text{and }\quad \max_{i\in\{1,2\}}\{ |\opFont{B}(\functionInCFiveFA)-\opFont{B}(F_i)|\}\leq \epsilon.
\end{align}
By the previous discussion and the linearity of $\opFont{A}_{\degAInCFive}$ and $\opFont{B}$, we have $\lim_{\degAInCFive\to\infty}\opFont{A}_{\degAInCFive}(F_i)=\opFont{B}(F_i)$ for $i\in\{1,2\}$. Using the positivity of the two functionals, we obtain
\begin{align}
\opFont{B}(\functionInCFiveFA)-\epsilon\leq \lim_{\degAInCFive\to\infty}\opFont{A}_{\degAInCFive}(F_1)& \leq \liminf_{\degAInCFive\to\infty} \opFont{A}_{\degAInCFive}(\functionInCFiveFA)\\ \nonumber
& \leq \limsup_{\degAInCFive\to\infty}\opFont{A}_{\degAInCFive}(\functionInCFiveFA) \leq \lim_{\degAInCFive\to\infty}\opFont{A}_{\degAInCFive}(F_2)\leq \opFont{B}(\functionInCFiveFA)+\epsilon.
\end{align}
Since $\epsilon$ is arbitrary, $\opFont{A}_{\degAInCFive}(\functionInCFiveFA)$ is a Cauchy sequence and converges to $\opFont{B}(\functionInCFiveFA)$.
\end{prfof}

\section{Additional numerical observations}\label{secC5:numericalremarks}
This appendix collects supplementary numerical observations that are not used in the proofs of the main results. Subsection~\ref{subsec:bounded-ball-observations} concerns SFB systems on the unit ball and should be viewed as a bounded-domain analogue of the SFB truncation model on $\R^d$. Subsection~\ref{subsec:fullspace-neardiagonal} returns to the reproducing kernel $\KLK$ on $\R^d$ and records additional near-diagonal kernel observations.
\subsection{Spherical Fourier-Bessel series on the unit ball}\label{subsec:bounded-ball-observations}

With appropriate boundary conditions, SFB series can be used to construct orthogonal bases for the bounded ball $\BBCFour{r}=\{x:\|x\|\leq r\}\subset \R^d$ (see also \cite{WRB08} for a discussion of formulations and applications of SFB on $\BB^d$ with $d=2,3$). For convenience, we consider only the unit ball, i.e., $\BBCFour{1}=\BB^d$. Two typical choices are the Dirichlet and Neumann boundary conditions, which respectively transform~\eqref{eqnC5:Helmholtz equation} into
\begin{align*}
\begin{cases}
(	\laplacian +k^2)f=0,\\
f|_{ \Sphere} =0.
\end{cases}
&&
\begin{cases}
(	\laplacian +k^2)f=0,\\
\frac{\partial}{\partial r}f|_{ \Sphere} =0.
\end{cases}
\end{align*}
In both cases, the spherical and radial variables can still be separated as in~\eqref{eqnC5:sp-helmholtz} and~\eqref{eqnC5:r-helmholtz}, with the additional boundary condition for $R(r)$ given by
\begin{align*}
R(1) =0
&&\text{or}&&
\frac{\diffsymbol}{\diffsymbol r}R(1) =0.	
\end{align*}
The separated solutions retain the form
\begin{align*}
f(r\xi)={C} r^{\frac{2-d}{2}}J_{\degaInCFive+\frac{d-2}{2}}(kr)Y_{\degaInCFive,\degb}(\xi),\qquad {C}\in\R.
\end{align*}
In this setting, however, the boundary condition restricts $k$ to a discrete set. For each $\degaInCFive\in\N_0$, the admissible values are the positive roots of
\begin{align}
J_{\degaInCFive+\frac{d-2}{2}}(k)=0,
\end{align}
for the Dirichlet condition, and the positive roots of
\begin{align}
\degaInCFive J_{\degaInCFive+\frac{d-2}{2}}(\tilde{k})-\tilde{k}J_{\degaInCFive+\frac{d}{2}}(\tilde{k})=0,
\end{align} 
for the Neumann boundary condition. In the Neumann case, the exceptional value $\tilde{k}=0$ occurs when $\degaInCFive=0$ and corresponds to constant functions. We denote the respective roots by $k_{\degaInCFive_i}$ and $\tilde{k}_{\degaInCFive_i}$. By Sturm--Liouville theory, for each $\degaInCFive$ there are countably many simple roots, which may be arranged in increasing sequences
\begin{align}
k_{\degaInCFive_1}< k_{\degaInCFive_2}<\cdots <k_{\degaInCFive_{i}}<\cdots,
\end{align}
and similarly for $\tilde{k}$. We now define the space $\spaceInCFive_{\degAInCFive, K}$ on $\BB^d$ by
\begin{align}\label{def:SHBseriesWithDirichletBoundary}
\spaceInCFive_{\degAInCFive, K}(\BB^d):=\mathrm{span}\{r^{\frac{2-d}{2}}J_{\degaInCFive+\frac{d-2}{2}}(k_{\degaInCFive_{i}}r) Y_{\degaInCFive,\degb}(\xi): k_{\degaInCFive_{i}}\leq K,  \degaInCFive \leq \degAInCFive,\degb\leq\mathrm{dim}(H_{\degaInCFive}^{d})\}.
\end{align}
This defines the SFB space with the Dirichlet boundary condition. The Neumann space is defined analogously by replacing $k_{\degaInCFive,i}$ with $\tilde{k}_{\degaInCFive,i}$ in~\eqref{def:SHBseriesWithDirichletBoundary}.
Write
\begin{align*}
p_{i,\degaInCFive,\degb}:={C}_{i,\degaInCFive,\degb} r^{\frac{2-d}{2}}J_{\degaInCFive+\frac{d-2}{2}}(k_{\degaInCFive_{i}}r) Y_{\degaInCFive,\degb}(\xi),
\end{align*}
where ${C}_{i,\degaInCFive,\degb}$ denotes the normalization constant such that $\int_{\BB^d} |p_{i,\degaInCFive,\degb}(x)|^2\diffsymbol x =1$. Then the reproducing kernel for $\spaceInCFive_{\degAInCFive, K}(\BB^d)$ is
\begin{align}\label{def:RPKforSHBseriesWithDirichletBoundary}
\KLK(\BB^d;x,y)= \sum_{k_{\degaInCFive_{i}}\leq K}\sum_{\degaInCFive \leq \degAInCFive}\sum_{\degb\leq \mathrm{dim}(H_{\degaInCFive}^{d})} p_{i,\degaInCFive,\degb}(x) \overline{p_{i,\degaInCFive,\degb}(y)}. %{r_x}^{\frac{2-d}{2}}J_{\degaInCFive+\frac{d-2}{2}}(k_{\degaInCFive,i}r_x) Y_{\degaInCFive,\degb}(\xi_x) \overline{r_y^{\frac{2-d}{2}}J_{\degaInCFive+\frac{d-2}{2}}(k_{\degaInCFive,i}r_y) Y_{\degaInCFive,\degb}(\xi_y)}
\end{align}
Numerically, we observe that the diagonal reproducing kernel $\KLK(\BB^d;x,x)$, associated with either the Dirichlet or Neumann boundary condition and with $\degAInCFive=\kappa K$, also seems to be asymptotically characterized by $K^d\times {\functionInCFiveFB}^{\langle d \rangle}_{\langle \kappa \rangle}$. Figure \ref{fig:SHBseriesOnUnitBall} shows $\KLK(\BB^d;x,x)$ normalized by $K^d$, with the Dirichlet boundary condition in the left column, the Neumann boundary condition in the right column, $d=2$ in the top row, and $d=3$ in the bottom row. The boundary conditions strongly influence the behavior of $\KLK(\BB^d;x,x)$ near $\Sphered$: the Dirichlet condition requires $p_{i,\degaInCFive,\degb}(x)=0$ for $\|x\|=1$, whereas the Neumann condition forces $|p_{i,\degaInCFive,\degb}(x)|$ to attain a directional local maximum at the boundary. In the interior of the ball, however, $\frac{\KLK(\BB^d;x,x)}{K^d}$ shows a clear trend of convergence to ${\functionInCFiveFB}^{\langle d \rangle}_{\langle \kappa \rangle}$.

\begin{figure}[!htbp]
\centering
\begin{subfigure}[t]{0.48\textwidth}
\centering
\includegraphics[width=\linewidth]{Fig_B_Dim2D_bc}
\caption{$d=2$, Dirichlet boundary condition.}
\end{subfigure}
\hfill
\begin{subfigure}[t]{0.48\textwidth}
\centering
\includegraphics[width=\linewidth]{Fig_B_Dim2N_bc}
\caption{$d=2$, Neumann boundary condition.}
\end{subfigure}

\medskip
\begin{subfigure}[t]{0.48\textwidth}
\centering
\includegraphics[width=\linewidth]{Fig_B_Dim3D_bc}
\caption{$d=3$, Dirichlet boundary condition.}
\end{subfigure}
\hfill
\begin{subfigure}[t]{0.48\textwidth}
\centering
\includegraphics[width=\linewidth]{Fig_B_Dim3N_bc}
\caption{$d=3$, Neumann boundary condition.}
\end{subfigure}

\caption{Normalized diagonal reproducing kernels $\mathcal{K}_{\degAInCFive,K}(\B^d;x,x)/K^d$ for SFB systems on the unit ball with $\kappa=\degAInCFive/K=0.5$. The left and right columns correspond to the Dirichlet and Neumann boundary conditions, respectively, while the top and bottom rows correspond to $d=2$ and $d=3$. The dotted curves show the corresponding limiting profiles ${\functionInCFiveFB}^{\langle d\rangle}_{\langle\kappa\rangle}(\|x\|)$ for the SFB truncation spaces on $\R^d$.}
\label{fig:SHBseriesOnUnitBall}
\end{figure}
\FloatBarrier

\subsection{\texorpdfstring{Supplementary near-diagonal kernel observations on $\R^d$}{Supplementary near-diagonal kernel observations on R d}}\label{subsec:fullspace-neardiagonal}
Another quantity of interest is the ratio $\frac{\KLK(x,x+\nicefrac{y}{K})}{\KLK(x,x)}$. Its behavior may provide a more precise description of the concentration of $\KLK(x,y)$ near the diagonal $x=y$, and thereby sharpen Proposition~\ref{prop:concentrationofK\degAInCFive, K}. Numerical experiments suggest that, under the relation $\degAInCFive=\kappa K$, the quantity $\lim_{K\to\infty}\frac{\KLK(x,x+\nicefrac{y}{K})}{\KLK(x,x)}$ converges to a function % $ H(\kappa^{-1} \|x\|,\theta,\|y\|)$ 
that depends on $\|\kappa^{-1}x\|$, $\|y\|$, and the angle $\theta$ between $x$ and $y$ %,
%\begin{align}
%\to H(\kappa^{-1} \|x\|,\theta,\|y\|)
%\end{align}
%where $\theta$ denotes the angle 
($\theta=\arccos(\frac{\langle x,y\rangle}{\|x\|\|y\|})$ for $x,y\neq 0$).

In particular, if $\|\kappa^{-1}x\|\leq 1$, then the limit function is rotationally invariant in $y$ (i.e., independent of $\theta$) and has the form
\begin{align}\label{eqnC5:state1}
\lim_{\degAInCFive/K=\kappa;\,K\to \infty}\frac{\KLK(x,x+\nicefrac{y}{K})}{\KLK(x,x)}	%H(\kappa^{-1} \|x\|,\theta,y) 
= 2^{d/2}\substigamma(d/2+1)\frac{J_{d/2}(\|y\|)}{\|y\|^{d/2}},
\end{align}
where the value at $y=0$ is defined by continuity. This is the normalized local form of the reproducing kernel of $\mathrm{PW}_{K}$. It can again be interpreted through the phase delay of Bessel functions: the angular-momentum truncation has almost no influence when $\|x\|\leq \kappa$. However, as $\|x\|$ grows, the limit function gradually deforms and rotation invariance disappears. Eventually, when $\kappa^{-1}\|x\|$ becomes large, the limit function appears to have the following approximate multiplicative structure:
\begin{align}\label{eqnC5:state2}
%	H(\kappa^{-1}\|x\|,\theta,y) 
\lim_{\degAInCFive/K=\kappa;\,K\to \infty}\frac{\KLK(x,x+\nicefrac{y}{K})}{\KLK(x,x)}
\approx \mathrm{sinc}(\|y\|\cos\theta)\,
2^{\frac{d-1}{2}}\substigamma\left(\frac{d+1}{2}\right)
\frac{J_{\frac{d-1}{2}}(z)}{z^{\frac{d-1}{2}}},
\end{align}
where $z=\frac{\|y\|\sin\theta}{\kappa^{-1}\|x\|}$, both Bessel quotients are defined by continuity at zero, and
\[
\mathrm{sinc}(t)=\frac{\sin t}{t}
=2^{1/2}\substigamma(3/2)\frac{J_{1/2}(t)}{t^{1/2}}.
\]
The quantities $\|y\|\cos\theta$ and $\|y\|\sin\theta$ are the lengths of the components of $y$ parallel and perpendicular to $x$, respectively. These statements are included only as supplementary numerical observations and are not used elsewhere in the paper. At present we do not have a proof of the existence of the limit function, nor a satisfactory characterization of the deformation from~\eqref{eqnC5:state1} to~\eqref{eqnC5:state2} as $\|x\|$ increases.

\end{document}